\crefname{algocf}{alg.}{algs.}
\Crefname{algocf}{Algorithm}{Algorithms}
\newcommand{\cpoincare}{C_{\textsc{P}}}
\newcommand{\chpoincare}{C_{h,\textsc{P}}}
\newcommand{\clsi}{C_{\textsc{LSI}}}
\newcommand{\chlsi}{C_{h,\textsc{LSI}}}
\newcommand{\cmlsi}{C_{\textsc{m-LSI}}}
\newcommand{\chmlsi}{C_{h,\textsc{m-LSI}}}
\newcommand{\ctail}{C_{\textsc{tail}}}
\newcommand{\chtail}{C_{h,\textsc{tail}}}
\newcommand{\lv}{\left\lVert}
\newcommand{\rv}{\right\rVert}
\newcommand{\mb}{\mathbb}
\newcommand{\mc}{\mathcal}
\newcommand{\p}{\pi}
\newtheorem{theorem}{Theorem}
\newtheorem{lemma}{Lemma}
\newtheorem{corollary}{Corollary}
\newtheorem*{definition}{Definition}
\newtheorem{proposition}{Proposition}
\theoremstyle{definition}
\newtheorem{assump}{Assumption}
\newenvironment{myassump}[2][]
  {\begin{assump}[#1]}
  {\end{assump}}
\newtheorem{remark}{Remark}
\newcommand*\Laplace{\mathop{}\!\mathbin\bigtriangleup}
\title{Heavy-tailed Sampling via Transformed Unadjusted Langevin Algorithm}
\author{Ye He}
\address{Department of Mathematics, University of California, Davis.} \email{leohe@ucdavis.edu}
\author{Krishnakumar Balasubramanian}
\address{Department of Statistics, University of California, Davis.}
\email{kbala@ucdavis.edu}
\author{Murat A. Erdogdu}
\address{Department of Computer Science and Department of Statistical Sciences, University of Toronto}
\email{erdogdu@cs.toronto.edu}
\begin{document}

\maketitle
\begin{abstract}
We analyze the oracle complexity of sampling from polynomially decaying heavy-tailed target densities based on running the Unadjusted Langevin Algorithm on certain transformed versions of the target density. The specific class of closed-form transformation maps that we construct are shown to be diffeomorphisms, and are particularly suited for developing efficient diffusion-based samplers. We characterize the precise class of heavy-tailed densities for which polynomial-order oracle complexities (in dimension and inverse target accuracy) could be obtained, and provide illustrative examples. We highlight the relationship between our assumptions and functional inequalities (super and weak Poincar\'e inequalities) based on non-local Dirichlet forms defined via fractional Laplacian operators, used to characterize the heavy-tailed equilibrium densities of certain stable-driven stochastic differential equations. 
\end{abstract}

\section{Introduction}  \label{sec:TULA}
 Given a potential function $f:\mathbb{R}^d \to \mathbb{R}$, we consider the problem of sampling from the density
\begin{align}\label{eq:target}
\pi(x) \coloneqq Z^{-1}e^{-f(x)},
\end{align}
where $Z\coloneqq \smallint e^{-f(x)}dx$ is an (unknown) normalization constant. A general strategy to sample from densities of the form in~\eqref{eq:target} is to discretize a diffusion equation that has $\pi$ as its stationary density. In particular, the over-damped Langevin diffusion described by the Stochastic Differential Equation (SDE)
\begin{align}\label{LDy}
    dX_t=-\nabla f(X_t) dt +\sqrt{2} d W_t,
\end{align}
where $W_t$ is a $d$-dimensional Brownian motion, has attracted considerable attention in the past decade. Under mild regularity conditions, the diffusion in~\eqref{LDy} has $\pi$ as its stationary density, which provides the motivation for the above approach. In this work, we are interested in the case when the density $\pi$ has heavy-tails (for example, tails that are polynomially decaying). Sampling from such heavy-tailed densities arise in various applications including robust statistics~\cite{kotz2004multivariate, jarner2007convergence, kamatani2018efficient}, multiple comparison procedures~\cite{genz2004approximations, genz2009computation} and statistical machine learning~\cite{nguyen2019non, simsekli2020fractional}. 

When the target density $\pi$ is heavy-tailed, the solution to~\eqref{eq:target} is not exponentially ergodic, that is, the solution does not converge to the stationary density rapidly. Indeed~\cite[Theorem 2.4]{roberts1996exponential} shows that if $|\nabla f(x)| \to 0$ when $|x| \to \infty$, then the solution to~\eqref{LDy} is \emph{not} exponentially ergodic. In the other direction, standard results in the literature, for example~\cite{wang2006functional, bakry2014analysis} show that the solution to~\eqref{LDy} being exponentially ergodic is equivalent to the density $\pi$ satisfying the Poincar\'e inequality, which requires $\pi$ to have exponentially decaying tails. Furthermore,~\cite[Chapter 4]{wang2006functional} shows that when $\pi$ has  polynomially decaying tails, the convergence is only sub-exponential or polynomial. 

Turning to time-discretizations of \eqref{LDy}, the Euler discretization or the Unadjusted Langevin Algorithm (ULA) is given by 
\begin{align}\label{ULA}
    x_{n+1}=x_n-\gamma \nabla f(x_n) +\sqrt{2\gamma} u_{n+1},  
\end{align}
where $(u_n)$ is a sequence of independent and identically distributed $d$-dimensional standard Gaussian vectors and $\gamma>0$ is a user-defined step size parameter. Over the past decade, non-asymptotic oracle complexity analysis of ULA (and other related discretizations) have been studied intensively. We refer to~\cite{dalalyan2017theoretical, durmus2017nonasymptotic, dalalyan2019user,durmus2019analysis, lee2020logsmooth, dwivedi2019log, shen2019randomized, he2020ergodicity, chen2020fast,chewi2021optimal, wu2021minimax} for the case when the potential $f$ is strongly convex,~\cite{durmus2019analysis, dalalyan2019bounding, chen2020fast, lehec2021langevin} when it is convex, and~\cite{cheng2018sharp,ma2019sampling, majka2020nonasymptotic} when it is non-convex. We also highlight the works of~\cite{vempala2019rapid},~\cite{erdogdu2020convergence},~\cite{nguyen2021unadjustedb} and~\cite{chewi2021analysis} which analyzed ULA when $\pi$ satisfies certain functional inequalities. Specifically,~\cite{vempala2019rapid} showed that when $\pi$ satisfies a Logarithmic Sobolev Inequality (LSI) and has Lipschitz-smooth gradients, ULA with a number of iterations of order $\Tilde{O}(1/\epsilon)$ generates a sample which is $\epsilon$-close to $\pi$ with respect to KL-divergence. A necessary condition for $\pi$ to satisfy the LSI condition is that it should have sub-Gaussian tails. Furthermore,~\cite{erdogdu2020convergence} considered densities that satisfy a modified LSI (m-LSI) inequality and showed that the number of iterations becomes of order $\Tilde{O}(1/\epsilon^c)$, for some $c\geq 1$ (which depends on certain smoothness conditions). A typical example of a density that satisfies a m-LSI condition but not the LSI condition is $\pi(x)\propto \exp(-|x|)$. Thus, the result in~\cite{erdogdu2020convergence} could also be viewed as an oracle complexity result for ULA when sampling from sub-exponential densities. Recently~\cite{nguyen2021unadjustedb} relaxed the conditions required in~\cite{erdogdu2020convergence} and provided similar results under the assumption that the target density satisfies a Poincar\'e inequality and dissipativity at the same time. Furthermore,~\cite{chewi2021analysis} also presented an analysis of ULA under the so-called Lata{\l}a-Oleszkiewicz~\cite{latala2000between} inequality, that interpolates between the LSI and Poincar\'{e} inequality for the stronger R\'enyi metric and removes the dissipativity assumptions required in~\cite{erdogdu2020convergence,nguyen2021unadjustedb}. It is worth pointing out here that the proofs of~\cite{erdogdu2020convergence} and~\cite{chewi2021analysis} are based on certain transformations of the target densities.

The above results, however, are not applicable to sampling from polynomially decaying heavy-tailed densities like the multivariate $t$-distribution, whose density is of the form $\pi(x)\propto (1+|x|^2)^{-\frac{d+\kappa}{2}}$, where $\kappa>0$ is the degrees-of-freedom parameter. Recently, some attempts have been made to sample from such heavy-tailed densities by considering stable-driven SDEs of the form 
\begin{align}\label{eq:stablesde}
        dX_t=b(X_t)dt +\sqrt{2} dZ_t
\end{align}
where $b$ is the drift term defined based on the Riesz potential, and $Z_t$ is an $\vartheta$-stable process with $\vartheta \in (1,2)$~\cite{simsekli2020fractional,csimcsekli2017fractional, huang2021approximation}. Specifically~\cite{huang2021approximation} established exponential ergodicity of the solution of~\eqref{eq:stablesde}, under conditions that allow for much heavier tails than Brownian-driven SDEs. The eventual hope is that discretizations of~\eqref{eq:stablesde} might lead to algorithms with provable non-asymptotic oracle complexity rates. However, it appears to be non-trivial to analyze discretization of~\eqref{eq:stablesde}, especially if we are interested in tight  non-asymptotic results, due to the difficulties in dealing with the non-smoothness of drift term $b$. 

In this work, we take an alternate approach for heavy-tailed sampling using ULA on a transformed version of the target density. Such an approach was used by \cite{johnson2012variable} in the context of Metropolis Random Walk algorithm, which serves as our motivation. The key idea in this approach is to construct smooth invertible maps (also called diffeomorphisms) $h:\mathbb{R}^d\to\mathbb{R}^d$ that transform the heavy-tailed density $\pi$ to an appropriately light-tailed density $\pi_h$. Given such a map, one could first sample from the light-tailed density $\pi_h$ and subsequently obtain samples from the heavy-tailed density $\pi$ using the inverse map of $h$. It is also worth highlighting that~\cite{deligiannidis2019exponential,durmus2020geometric} and~\cite{bierkens2019ergodicity} used the transformation approach for proving asymptotic exponential ergodicity of bouncy particle and zig-zag samplers respectively, in the heavy-tailed setting.

There are several issues to overcome when using the above strategy in the context of ULA. First, note that the constructed map $h$ has to convert the heavy-tailed density $\pi$ to a light-tailed density $\pi_h$. In this process, however, the bulk of the density $\pi_h$ might become non-smooth, if the map is not constructed carefully. This non-smoothness could subsequently hinder the usage of ULA algorithm to sample from $\pi_h$. Second, the constants involved (for example, the LSI or m-LSI constant) in the light-tailed density $\pi_h$ might start to depend exponentially on the dimension after transformation. This again hinders the efficiency of the ULA when sampling from $\pi_h$. Furthermore, the transformation map needs to be efficiently computable. In this work, we propose a family of carefully constructed transformations that overcome the above issues and present non-asymptotic results for sampling from a class of heavy-tailed densities.

\subsection{Other Related Approaches} We briefly also highlight two other potential approaches for sampling from heavy-tailed densities. Instead of considering isotropic Langevin diffusions of the form in~\eqref{LDy}, one could also consider general classes of It\^{o} diffusions still driven by Brownian motion. However, it is not clear how to specify the drift and diffusion terms of the It\^{o} diffusion for a given target density. In this context, the proposed transformations could also be interpreted as providing a principled way of constructing the drift and diffusion terms; we elaborate more on this in~\cref{sec:transformationvsIto}. Relatively few works exist on analyzing discretization of general It\^{o} diffusions; see, for example, \cite{erdogdu2018global, li2019stochastic}. Furthermore, the conditions on the allowed class of  It\^{o} diffusions and their corresponding invariant measures in~\cite{erdogdu2018global, li2019stochastic} need to imply contraction in Wasserstein metric, which may not be satisfied in the polynomially decaying heavy-tailed regimes we focus on in this work.

Yet another approach is to learn a transformation that maps an easily samplable density to the heavy-tailed target density~\cite{erbar2014gradient, spantini2018inference, parno2016multiscale, cui2021deep, teshima2020coupling, koehler2021representational, papamakarios2021normalizing}. In the literature, popular approaches to learn such transformations are based on parametrization with neural networks or tensor-trains. Compared to this approach, we can provide \emph{closed form representations} for the transformation and its inverse. Furthermore, to our knowledge non-asymptotic oracle complexity results for learned transformations are lacking even in the light-tailed case.   

\subsection{Organization} The rest of the paper is organized as follows. In ~\cref{sec:prelim} we introduce the notation and preliminary background material used in the rest of the paper. In Section~\ref{sec:transformations}, we introduce our transformation map, highlight key properties and present the Transformed Unadjusted Langevin Algorithm (TULA) algorithm. We also discuss a warm-up example regarding exponentially tailed densities, and provide an interpretation of the transformed diffusion as a special case of It\'o diffusions. In Section~\ref{sec:THM}, we present non-asymptotic oracle complexity results for TULA under various assumptions on the potential function that characterize the level of heavy-tails allowed. In Section~\ref{sec:htfunctionalineq}, we discuss the relationship between our assumptions on the heavy-tails used in Section~\ref{sec:THM} and non-local Dirichlet form based functional inequalities (arising in the equilibrium analysis of stable-driven diffusions). Illustrative examples are provided in Section~\ref{sec:Examples}.

\section{Preliminaries}\label{sec:prelim}

\noindent\textbf{Notation:} For a vector $a \in \mathbb{R}^d$, we represent the Euclidean norm by $|a|$. For a mapping $h:\mathbb{R}^d \to \mathbb{R}^d$, we denote the Jacobian matrix by $\nabla h \in \mathbb{R}^{d \times d}$. In the case when $h:\mathbb{R}^d \to\mathbb{R}$, $\nabla h \in\mathbb{R}^d$ denotes the gradient vector and $\Laplace  h = \nabla \cdot \nabla h $ denotes the Laplacian. For a function $h:\mathbb{R} \to \mathbb{R}$, we simply denote its first, second and third order derivatives by $h'$, $h''$ and $h'''$ respectively. For a matrix $A$, we denote its determinant and operator norm by $\det(A)$ and $\|A\|$ respectively. For two symmetric matrices $A,B$, the relation $A \preceq B$ refers to the fact that $B-A$ is positive semi-definite. The class of function $\mathcal{C}^k(\Omega)$ refers to those functions that have $k$-times continuously differentiable derivatives on the domain $\Omega$. For a function $\phi$, $\|\phi\|_\infty$ refers to the $\sup$-norm.


We also require the following definitions used in the rest of the paper. Let $\nu$ and $\mu$ be two probability densities with full support on $\mb{R}^d$. Then, for a convex function $\Phi:\mathbb{R} \to \mathbb{R}$ such that $\Phi(1) =0$, the \textit{$\Phi$-divergence} of $\nu$ from $\mu$ is defined as 
\begin{align*}
    D_\Phi(\nu|\mu)\coloneqq \int_{\mb{R}^d} \Phi\left(\frac{\nu(x)}{\mu(x)}\right)\mu(x)dx.
\end{align*}
When the function is given by $\Phi(t) = t \log(t)$, we obtain the \textit{Kullback-Leibler (KL) divergence} of $\nu$ with respect to $\mu$, given by
        \begin{align*}
          H_\mu(\nu)\coloneqq\int_{\mb{R}^d} \log \frac{\nu(x)}{\mu(x)} \nu(x) dx.
        \end{align*}
Our complexity results later will be provided in terms of KL-divergence. The \textit{Relative Fisher Information} of $\nu$ with respect to $\mu$ is given by
    \begin{align*}
      I_\mu(\nu)\coloneqq \int_{\mb{R}^d} \left|\nabla \log \frac{\nu(x)}{\mu(x)}\right|^2\nu(x) dx.
    \end{align*}
The R\'enyi divergence of order $q>1$ is defined as
\begin{align*}
R_{q}(\nu|\mu) = \frac{1}{q-1} \log \left(\int_{\mathbb{R}^d} \left(\frac{\nu(x)}{\mu(x)}\right)^q \mu(x) dx \right).
\end{align*}
Note that when $q\to1_+$, we have $R_q(\nu|\mu)$ approaching $H_\mu(\nu)$.

We now introduce additional technical details required for discussing functional inequalities; rigorous expositions could be found in~\cite{wang2006functional, bakry2014analysis}. Let $(\Omega, \mathcal{F}, \mu)$ be a probability space and let $\mathcal{L}$ denote a linear operator (infinitesimal generator) that is self-adjoint with domain $D(\mathcal{L})$ which generates a Markov semi-group $P_t$ on $L^2(\mu)$. The carr\'e de champ operator associated to the infinitesimal generator $\mathcal{L}$ is given by the bilinear map $\Gamma(\phi_1,\phi_2) = 1/2\left[ \mathcal{L}(\phi_1 \phi_2) - \phi_1 \mathcal{L} \phi_2  - \phi_2 \mathcal{L} \phi_1 \right]$, for all $\phi_1, \phi_2$ defined in a subspace of $D(\mathcal{L})$ which is an algebra. We call the collection of the measure $\mu$ on a state space $(\mb{R}^d, \mc{B}(\mb{R}^d))$ and a carr\'{e} de champ operator $\Gamma$ a Markov triple, denoted as $(\mb{R}^d,\mu,\Gamma)$. It is well-known that the Dirichlet form associated with a Markov semi-group $P_t$ is then given by  $\mathcal{E}(\phi_1,\phi_2) = \int \Gamma(\phi_1,\phi_2) d\mu$. By a standard integration-by-parts argument, we also have that $\mathcal{E}(\phi_1,\phi_2) = - \int \phi_1 \mathcal{L} \phi_2 $. We use the convention $\mathcal{E}(\phi)$ to denote $\mathcal{E}(\phi,\phi)$. The Dirichlet domain $\mc{D}(\mc{E})$ is defined as $\mc{D}(\mc{E})\coloneqq\left\{ \phi\in L^2(\mu): \mc{E}(\phi)<\infty \right\}$.

In the case of Brownian driven diffusions as in~\eqref{LDy}, the generator $\mathcal{L}$ is defined based on the Laplacian operator $\Laplace$, which is a local operator.  Correspondingly, the Dirichlet form is given by $\mathcal{E}(\phi) = \int |\nabla \phi(x)|^2 \mu(x) dx$, for all $\phi \in \mc{D}(\mc{E}) $. Based on this, we will introduce functional inequalities below. A probability density $\mu$ is said to satisfy \textit{Poincar\'e Inequality (PI)} with constant $\cpoincare$, denoted as $\mu \sim P(\cpoincare)$ if for all functions $\phi\in \mc{D}(\mc{E})$, we have 
\begin{align}\tag{PI}\label{eq:PI}
 \text{Var}_{\mu}(\phi):=  \int_{\mb{R}^d} \left( \phi^2(x) - \left( \int_{\mb{R}^d} \phi(x) \mu(x) dx\right) \right)^2 \mu(x) dx   \le \cpoincare \int_{\mb{R}^d} |\nabla \phi(x)|^2 \mu(x) dx = \cpoincare \mathcal{E}(\phi) .
\end{align}
Similarly, a probability density $\mu$ satisfies a \textit{Logarithmic Sobolev inequality (LSI)} with constant $\clsi$ denoted as $\mu\sim LS(\clsi)$ if for all functions $\phi\in \mc{D}(\mc{E})$, we have
    \begin{align}\tag{LSI}\label{eq:LSI}
   \text{Ent}_\mu(f):=     \int_{\mb{R}^d} f^2(x) \log\left(\frac{f^2(x)}{\int_{\mb{R}^d}f^2(x) \mu(x) dx} \right)\mu(x) dx   \le 2 \clsi \int_{\mb{R}^d} |\nabla f(x)|^2 \mu(x) dx = 2\clsi \mc{E}(\phi) .
    \end{align}
An equivalent form of LSI is that  for all probability densities $\rho(x)$, we have
    \begin{align*}
        H_{\mu}(\rho) \le \frac{\clsi}{2} I_{\mu}(\rho).
    \end{align*}
We refer the reader to~\cite[Chapter 5]{bakry2014analysis} for the derivation of the equivalence. A probability density $\mu(x)$ satisfies a \textit{modified Log-Sobolev Inequality (m-LSI)} if for all probability measure $\rho(x)$ and all $s\geq 2$, there is $\delta\in[0,1/2)$ (depending on $s$) such that
        \begin{align}\tag{m-LSI}\label{eq:mLSI}
            H_\mu(\rho)\le \cmlsi I_{\mu}(\rho)^{1-\delta}M_s(\rho+\mu)^{\delta}.
        \end{align}
where $M_s(\rho)=\int_{\mb{R}^d} (1+|x|^2)^{s/2}\rho(x)dx$. This version of m-LSI was introduced by~\cite{erdogdu2020convergence} (also see \cite{chewi2021analysis}), motivated by a related definition from~\cite{toscani2000trend}. It is important to notice that the above version of m-LSI does not contain the Poincar\'e inequality as a special case, i.e., there exists densities that satisfy the above m-LSI inequality but not Poincar\'e inequality and vice versa. There exists other modifications to the LSI including the Beckner or Nash inequality \cite[Chapter 7]{bakry2014analysis} and the Lata{\l}a-Oleszkiewicz~\cite{latala2000between} refinement to it, that interpolate between the LSI and Poincar\'e inequalities.

The above discussion is focused on Brownian driven SDEs. It turns out that the above class of functional inequalities are suitable for characterizing light-tailed densities (i.e., tails that decay exponentially fast). In the case of $\vartheta$-stable driven diffusions as in~\eqref{eq:stablesde}, the generator is defined based on the non-local fractional Laplacian operator $(-\Laplace)^{-\frac{\vartheta}{2}}$; see, for example,~\cite{kwasnicki2017ten}. Correspondingly, in Section~\ref{sec:htfunctionalineq}, we present more general functional inequalities based on non-local Dirichlet forms that are suitable for characterizing heavy-tailed densities and discuss the connection between our assumptions and such functional inequalities.

\section{The Transformed Unadjusted Langevin Algorithm} \label{sec:transformations}
\subsection{Transformation Map}\label{Trf}
We start this section by stating the following important property satisfied by smooth invertible transformation maps $h:\mathbb{R}^d \to \mathbb{R}^d$. 
\begin{definition}[Transformed density functions] For a probability density $\mu(x)$ with full support in $\mb{R}^d$, its transformed density function under a smooth invertible transformation map (or a diffeomorphism) $h$ is given by $\mu_h(x)=\mu(h(x)) \det (\nabla h(x))$ for all $x\in \mb{R}^d$.
\end{definition}
If a random vector $X$ has density $\mu$, then we denote the density of the random vector $Y=h^{-1}(X)$, denoted as $\mu_h$, as the transformed density of $\mu$.
Note that in particular if $X$ admits density $\pi$ of the form in~\eqref{eq:target}, then $Y=h^{-1}(X)$ is distributed with density 
  \begin{align}\label{Tdens}
        \pi_h(y)= Z^{-1}e^{-f_h(y)}\quad \text{with}\quad f_h(y)=f(h(y))-\log \det (\nabla h(y)),
  \end{align}
being referred to as the transformed potential. In what follows, we assume that the potential function is isotropic. We emphasize that this assumptions is made for the sake of technical convenience -- it is possible to relax this assumption to certain mild regularity conditions on the density, at the expense of having a more cluttered exposition. 
\begin{myassump}{A0}
\label{ass:A0}
The initial potential function $f$ is isotropic, i.e $f(x)=f(|x|)$ and $f:\mb{R}\to \mb{R}$ is twice continuously differentiable.
\end{myassump}
\noindent Since $f$ is isotropic under assumption \cref{ass:A0}, we may consider $f$ to be a function defined on $\mb{R}_+$ as well. In the later context, we use $f(|x|)$ when we consider $f$ defined on $\mb{R}_+$ and we use $f(x)$ when it is defined on $\mb{R}^d$. Similarly, when we use $f'(|x|), f''(|x|)$ and so on, to represent the derivatives, we consider $f$ to be a function defined on $\mb{R}_+$.

 We now describe the construction of our specific transformation map. Our proposal is motivated by the work of~\cite{johnson2012variable}, who constructed similar maps to show exponential ergodicity of the Metropolis Random Walk (MRW) algorithm. It turns out that a direct application of their construction to analyze Langevin diffusions and their discretization, leads to worse dimensionality dependencies in the non-asymptotic oracle complexities. Indeed, this is expected as~\cite{johnson2012variable} predominantly focused on establishing asymptotic results. In order to proceed, we first define functions $g:\mathbb{R} \to \mathbb{R}$ which correspond to the first part of the transformation map construction. Specifically,  $g$ is defined based on initial function $g_{in}$ as 
\begin{equation}\label{G1}
    g(r)=\left\{
    \begin{aligned}
        &g_{in}(r),   &r< b^{-\frac{1}{\beta}},\\
        &e^{br^\beta} &r\ge b^{-\frac{1}{\beta}}.
    \end{aligned}
    \right.
\end{equation}
where $\beta\in(1,2]$. The initial function $g_{in}:[0,b^{-\frac{1}{\beta}})\to [0,e)$ satisfies the following assumption.
\begin{myassump}{G1}\label{ass:G0} The initial function $g_{in}:[0,b^{-\frac{1}{\beta}})\to [0,e)$ is onto, monotone increasing and twice continuously differentiable. Furthermore, it satisfies, 
 \begin{equation*}
     \begin{split}
     g_{in}(0)&=0\\
     \lim_{r\to b^{-\frac{1}{\beta}}_{-}} g_{in}(r)&=e,\qquad \\
     \lim_{r\to b^{-\frac{1}{\beta}}_{-}} g'_{in}(r)&=\beta b^{\frac{1}{\beta}} e,\\
     \lim_{r\to b^{-\frac{1}{\beta}}_{-}} g''_{in}(r)&=(2\beta^2-\beta)b^{\frac{2}{\beta}}e   ,\\
     \lim_{r\to b^{-\frac{1}{\beta}}_{-}} g'''_{in}(r)&=(5\beta^3-6\beta^2+2\beta)b^{\frac{3}{\beta}}e,
     \end{split}
 \qquad\qquad
     \begin{split}
     \lim_{r\to 0_+} \left|\frac{f'(g_{in}(r))g'_{in}(r)}{r}\right|&<\infty, \\
     \lim_{r\to 0_+} \left|\frac{\frac{d}{dr}\log g_{in}'(r)}{r}\right|&<\infty,\\
     \lim_{r\to 0_+} \left|\frac{\frac{d}{dr}\log \frac{g_{in}(r)}{r}}{r}\right|&<\infty,\\
     \lim_{r\to 0_+} \left|\frac{d^2}{dr^2}\log \frac{g_{in}(r)}{r}\right|&<\infty,\\
     \lim_{r\to 0_+} \left|\frac{d^2}{dr^2}\log g_{in}'(r) \right|&<\infty.
     \end{split}
\end{equation*}
\end{myassump}
 We now show that if $g_{in}$ satisfies \cref{ass:G0}, then $g$ is three times continuously differentiable and invertible on $\mb{R}$.
\begin{lemma}\label{PropertyG} For the function $g$ defined in \eqref{G1}, if $g_{in}$ satisfies \cref{ass:G0}, then we have
\begin{itemize}
    \item [(1)] $g\in \mc{C}^3((0,\infty))$,
    \item [(2)] $g$ is onto, strictly monotonically increasing, and hence invertible.
\end{itemize}
\end{lemma}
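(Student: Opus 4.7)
The overall strategy is a direct piecewise analysis. Each of the two pieces defining $g$ in \eqref{G1} is individually smooth in the interior of its own domain, so both claims reduce to (i) a gluing check at the single transition point $r_0 \coloneqq b^{-1/\beta}$ and (ii) straightforward monotonicity/limit arguments.

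For (1), on $(0, r_0)$ we have $g = g_{in}$, which is at least $\mc{C}^2$ by \cref{ass:G0}; combined with the prescribed existence of the one-sided limits of $g'''_{in}$ at $r_0^-$, this is enough to treat $g_{in}$ as $\mc{C}^3$ up to $r_0$ from the left. On $(r_0, \infty)$, the expression $g(r) = e^{br^\beta}$ is $\mc{C}^\infty$ since $\beta > 1$ and $r > 0$. The remaining task is therefore to verify that $g$, $g'$, $g''$, $g'''$ agree at $r_0$ from the two sides. Writing $g = e^{u}$ with $u(r) = br^\beta$, the chain rule gives
\begin{align*}
g'(r) &= u'(r)\, e^{u(r)}, \\
g''(r) &= [u''(r) + u'(r)^2]\, e^{u(r)}, \\
g'''(r) &= [u'''(r) + 3\,u'(r)\,u''(r) + u'(r)^3]\, e^{u(r)}.
\end{align*}
Evaluating at $r_0$ using $u^{(k)}(r_0) = \beta(\beta-1)\cdots(\beta-k+1)\,b^{k/\beta}$ and $e^{u(r_0)} = e$ produces, in order, $e$, $\beta b^{1/\beta} e$, $(2\beta^2-\beta) b^{2/\beta} e$, and $(5\beta^3-6\beta^2+2\beta) b^{3/\beta} e$, which are precisely the left-limits prescribed in \cref{ass:G0}. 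This forces $g \in \mc{C}^3((0,\infty))$.

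For (2), strict monotonicity is immediate piecewise: $g_{in}$ is strictly increasing by \cref{ass:G0}, and on $[r_0,\infty)$ one has $g'(r) = b\beta r^{\beta-1} e^{br^\beta} > 0$. The continuity established in (1) then glues the two branches into a strictly increasing function on $[0,\infty)$. Since $g(0) = g_{in}(0) = 0$ and $g(r) \to \infty$ as $r \to \infty$, the intermediate value theorem gives $g([0,\infty)) = [0,\infty)$, and strict monotonicity upgrades this to a bijection, hence invertibility.

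The only genuinely tedious step is the third-derivative matching at $r_0$, but it is purely computational: the left-column conditions in \cref{ass:G0} have been reverse-engineered to coincide with the first three derivatives of $e^{br^\beta}$ at $r_0$, so no conceptual obstacle is expected. The right-column (near-zero) limits in \cref{ass:G0} play no role in this lemma; they are regularity conditions intended for the later analysis of the transformed potential $f_h$ and the associated log-Jacobian term near the origin.
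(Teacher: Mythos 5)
Your proof is correct and follows essentially the same strategy as the paper's: verify that the derivatives of $e^{br^\beta}$ at $r_0 = b^{-1/\beta}$ match the prescribed one-sided limits of $g_{in}^{(k)}$ to get $\mc{C}^3$ gluing, then combine piecewise strict monotonicity with continuity and the endpoint limits $g(0)=0$, $g(r)\to\infty$ to obtain bijectivity. The only difference is that you make the chain-rule computation explicit via $g=e^u$ (which the paper leaves implicit) and you remark that the near-zero limits in Assumption G1 are not used here — both harmless refinements.
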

The proof of \cref{PropertyG} is provided in~\cref{AnalyTran}. We now show that under~\cref{ass:G0}, the $\Phi$-divergence is preserved after transformation. This property is important to eventually provide our convergence results for sampling.

\begin{proposition}\label{Phithm} Let $h:\mathbb{R}^d\to\mathbb{R}^d$ be a transformation map satisfying~\cref{ass:G0}. For any two probability densities $\nu$ and $\mu$ with full support on $\mb{R}^d$, let $\nu_h$ and $\mu_h$ be the two transformed densities under the map $h$. Then the $\Phi$-divergence is preserved after transformation, i.e., we have
\begin{align}\label{phiequ}
   D_\Phi(\nu|\mu) = D_\Phi(\nu_h|\mu_h).
\end{align}
\end{proposition}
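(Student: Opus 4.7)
The proof is essentially a change-of-variables computation; all of the work has already been done in establishing that $h$ is a diffeomorphism (Lemma~\ref{PropertyG}) with everywhere positive Jacobian determinant. I would proceed in three short steps.

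First, I would observe that in the ratio $\nu_h/\mu_h$ the Jacobian factor cancels: by the definition of the transformed density,
\begin{align*}
\frac{\nu_h(x)}{\mu_h(x)} \;=\; \frac{\nu(h(x))\,\det(\nabla h(x))}{\mu(h(x))\,\det(\nabla h(x))} \;=\; \frac{\nu(h(x))}{\mu(h(x))},
\end{align*}
which is well-defined since $\mu$ has full support. This is the key algebraic observation that makes the identity work independently of the particular choice of $\Phi$.

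Next, I would substitute into the definition of $\Phi$-divergence and apply the standard change of variables $y=h(x)$, using that Lemma~\ref{PropertyG} gives $h\in \mathcal{C}^3$, $h$ is a bijection from $\mathbb{R}^d$ onto $\mathbb{R}^d$, and $\det(\nabla h(x))>0$ everywhere (which justifies writing the determinant without absolute values and guarantees that $h$ is orientation-preserving, so the image of $\mathbb{R}^d$ under $h$ is all of $\mathbb{R}^d$). Then
\begin{align*}
D_\Phi(\nu_h|\mu_h)
&= \int_{\mathbb{R}^d} \Phi\!\left(\tfrac{\nu_h(x)}{\mu_h(x)}\right)\mu_h(x)\,dx
 = \int_{\mathbb{R}^d} \Phi\!\left(\tfrac{\nu(h(x))}{\mu(h(x))}\right)\mu(h(x))\,\det(\nabla h(x))\,dx \\
&= \int_{\mathbb{R}^d} \Phi\!\left(\tfrac{\nu(y)}{\mu(y)}\right)\mu(y)\,dy
 = D_\Phi(\nu|\mu),
\end{align*}
where in the penultimate equality I have used $y=h(x)$, $dy = \det(\nabla h(x))\,dx$, and the fact that $h$ is a bijection of $\mathbb{R}^d$.

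I do not anticipate any real obstacle here; the statement is essentially a tautology once the diffeomorphism property is in hand. The only points that deserve a sentence of care are (i) noting that $\det(\nabla h(x))>0$ so that $\mu_h$ is a genuine probability density and the determinant equals its absolute value in the change-of-variables formula, and (ii) confirming that the image $h(\mathbb{R}^d)$ is all of $\mathbb{R}^d$, which follows from the surjectivity part of Lemma~\ref{PropertyG}. Everything else is direct substitution, and the argument does not use any structural feature of $\Phi$ beyond measurability, so the same proof simultaneously yields the preservation identity for KL-divergence and (by taking $\Phi(t)=t^q$ on the appropriate interval) for R\'enyi divergence as well, which will be convenient later.
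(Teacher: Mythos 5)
Your proof is correct and is essentially identical to the paper's: both observe the Jacobian cancellation in the ratio $\nu_h/\mu_h$ and then undo the change of variables $x=h(y)$ using that $\det(\nabla h)>0$ under Assumption~G1. The extra remarks on surjectivity and orientation-preservation are fine bookkeeping but do not change the argument.
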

\begin{proof}[Proof:] We start from the right side of \eqref{phiequ}:
\begin{align*}
    D_\Phi(\nu_h|\mu_h)&=\int_{\mb{R}^d} \Phi\left(\frac{\nu_h(y)}{\mu_h(y)}\right) \mu_h(y)dy=\int_{\mb{R}^d} \Phi\left(\frac{\nu(h(y)) \det (\nabla h(y))}{\mu(h(y)) \det (\nabla h(y))}\right) \mu(h(y)) \det (\nabla h(y)) dy \\
    &=\int_{\mb{R}^d} \Phi\left(\frac{\nu(x)}{\mu(x)}\right) \mu(x) dx=D_\Phi(\nu|\mu).
\end{align*}
The second identity follows by the change of variable $x=h(y)$ and noting $\det (\nabla h(y))>0$ under~\cref{ass:G0}.
\end{proof}

With the properties of $g$ introduced in \cref{PropertyG}, we can then further define the isotropic transformations $h:\mb{R}^d\to \mb{R}^d$:
\begin{equation}\label{Hmap}
    h (x)=\left\{
    \begin{aligned}
        & \frac{g(|x|)x}{|x|}\ \ \ \ x\neq 0,\\
        & 0 \ \ \ \ \ \ \ \ \ \ \ \ x=0.
    \end{aligned}
    \right.
\end{equation}
We call the map $y\mapsto x= h(y)$ to be the transformation map, which is isotropic. Furthermore, $h$ is also three times continuously differentiable and invertible on $\mb{R}^d$ and its inverse is 
 \begin{equation*}
    h^{-1} (x)=\left\{
    \begin{aligned}
        & g^{-1}(|x|)\frac{x}{|x|}\ \ \ \ x\neq 0,\\
        & 0 \ \ \ \ \ \ \ \ \ \ \ \ \ \ \ \ \ x=0.
    \end{aligned}
    \right.
\end{equation*}
 Therefore, we can define the inverse transformation map $x \mapsto y=h^{-1}(x)$.

\subsection{Transformed Langevin Diffusion and its discretization}

With the transformed density defined in \eqref{Tdens}, the transformed overdamped Langevin diffusion is given by 
 \begin{align}\label{TLanD}
    dY_t=-\nabla f_h(Y_t)dt+\sqrt{2}dW_t.
 \end{align}
 We denote the density of $Y_t$ by $\rho_t$ for all $t\ge 0$.  The stationary density function for the diffusion given by \eqref{TLanD} is $\pi_h$ as defined in \eqref{Tdens}. 
 We can apply Euler discretization to the transformed overdamped Langevin diffusion in~\eqref{TLanD} and generate a Markov chain $(y_n)_{n\ge 1}$ via the recursion, 
 \begin{align}\label{TULan}
     y_{n+1}=y_n-\gamma_{n+1}\nabla f_h(y_n)+\sqrt{2\gamma_{n+1}}u_{n+1}
 \end{align}
  where $(u_n)$ is a sequence of  independent and identically distributed $d$-dimensional standard Gaussian vectors and $\gamma>0$ is the fixed step size. The Transformed Unadjusted Langevin algorithm (TULA) in order to  generate samples from a heavy-tailed density $\pi$ is given in~\cref{alg:TULA}.
  
  \begin{algorithm}[t]
  \caption{Transformed Unadjusted Langevin Algorithm (\texttt{TULA})}
  \SetAlgoLined
  \SetKwInOut{Input}{Input}
  \SetKwInOut{Output}{Output}
  \Input{Step size $\gamma$ and a sample $y_0$ from a starting density $\rho_0$ }
  \Output{Sequence $x_1,x_2,\cdots$}
  \For{$n=0,1,\cdots$}{ 
   \ $x_n=h(y_n)$  \hspace{.3in} \Comment{apply the inverse transformation} \;
  \ $y_{n+1}\sim \mc{N}(y_n-\gamma \nabla f_h(y_n),2\gamma I_d)$ \Comment{generate samples} \;
  }\label{alg:TULA}
  \end{algorithm}

We use $\nu_n$ to denote the density of the $n$th iterate $x_n$ and $\pi_{\gamma}$ to denote the stationary density of $(x_n)_{n\ge 1}$. Since the step-size $\gamma$ in \cref{alg:TULA} is a constant, there is a bias between $\pi_\gamma$ and $\pi$. For arbitrary accuracy $\epsilon>0$, by choosing small enough step-size $\gamma$ and large enough number of iterations $n$, we can bound the distance between $\nu_n$ and $\pi$ by $\epsilon$ in terms of KL or R\'enyi divergence.


\subsubsection{A Warm-up Example}
Although our main motivation is to sample from densities that have polynomially decaying tails, in this subsection, we provide a warm-up example on sampling from a density that has exponentially decaying tails (see~\eqref{eq:subexp example} for the definition of the potential function) and does not satisfy LSI, by transforming it to satisfy LSI. Towards that goal, we consider the transformation map in~\eqref{Hmap} with the function $g$ defined as 
\begin{equation}\label{eq:trasnformation map for subexp}
   g(r)=\left\{
    \begin{aligned}
    & dr^2, \quad & r\ge R, \\
    & g_{in}(r), \quad & 0\le r\le R,
    \end{aligned}
    \right.
\end{equation}
where $R>0$ is a constant, with $$g_{in}(r)=dRr\exp\left(-\frac{5}{6}+\frac{3}{2}\frac{r^2}{R^2}-\frac{2}{3}\frac{r^3}{R^3}\right).$$
The above form for $g$ is motivated by~\cite[Equation 15]{johnson2012variable}, where they constructed transformation maps to transform densities that are sub-exponential to sub-Gaussian. We also point out that we consider the form of $g$ in~\eqref{eq:trasnformation map for subexp} only for this section, and it should not be confused with the general form~\eqref{G1} considered in the rest of the paper. By an argument similar to the proof of~\cref{PropertyG}, it could be shown that the transformation map defined with $g$ as in~\eqref{eq:trasnformation map for subexp} is a diffeomorphism. 

Now, consider the potential function defined in a piece-wise manner as
    \begin{equation}\label{eq:subexp example}
        f(x)=\left\{
        \begin{aligned}
        &(1+|x|^2)^{\frac{1}{2}}+\frac{1}{2}d\log |x|, \quad &|x|\ge R,\\
        & (1+d^2 g_{in}^{-1}(|x|)^4)^{\frac{1}{2}}+(d-1)\ln \frac{|x|}{g_{in}^{-1}(|x|)}+\log g_{in}'(g_{in}^{-1}(|x|))-\frac{d}{2}\log d-\ln 2, & |x| \in [0,R].
        \end{aligned}
        \right.
    \end{equation}
The corresponding probability density induced by the potential $f$ above has a lighter tail than the one with potential $|x|$. But it has a heavier tail than densities with potentials $|x|^{\varrho}$ for any $\varrho>1$. For the above potential $f$, the transformed potential is given by 
$$
f_h(x)=(1+d^2|x|^4)^{\frac{1}{2}}-\frac{d}{2}\log d-\log 2.
$$

The LSI constant of the density induced by $f_h$ can be studied via the Holley-Stroock Theorem (see~\cref{HS}). We can write 
    \begin{align*}
        f_h(x)&=d|x|^2+\frac{1}{d|x|^2+(1+d^2|x|^4)^{\frac{1}{2}}}-\frac{d}{2}\log d-\log 2 \\
        &:=d|x|^2-\frac{d}{2}\log d-\log 2 + \text{Osc}(x),
    \end{align*}
    where $\text{Osc}(x)\coloneqq\frac{1}{d|x|^2+(1+d^2|x|^4)^{\frac{1}{2}}}$ and is uniformly bounded by $1$. Meanwhile the density corresponding to the potential function $e^{-d|x|^2}+\frac{d}{2}\log d+\log 2$ satisfies LSI with constant ${1}/{2d}$. Therefore $e^{-f_h}$ satisfies LSI with constant $\chlsi={e}/{2d}$. On the other hand, $f_h(x)$ also has  Lipschitz gradients with constant $L_h=O(d)$. Hence, according to \cite{vempala2019rapid} and~\cref{Phithm}, the iteration complexity of TULA for sampling from a density with potential $f$ as in~\eqref{eq:subexp example} is of order $\Tilde{O}({d}/{\epsilon})$ where $\epsilon$ is the error tolerance in KL-divergence. This is to be contrasted with~\cite[Examples 9 and 11]{chewi2021analysis} on using ULA to sample from densities with potentials of the from $|x|^\varrho$ for $\rho \in [1,2]$. Specifically, we note that TULA has better oracle complexity as long as $\rho \in (1,2]$.

\subsection{Transformed Langevin Diffusions as It\^{o} diffusions}
\label{sec:transformationvsIto}
It is worth noting that the transformed diffusion process in~\eqref{TLanD} could also be interpreted in terms of an It\^{o} diffusion. Specifically,  by a direct calculation, the stochastic process $X_t=h(Y_t)$ has the form
\begin{equation}\label{eq:Langevin to X}
    \begin{aligned}
    dX_t&=b(X_t)dt+\sigma(X_t)dW_t,\\
\end{aligned}
\end{equation}
 with $\sigma(x)\coloneqq\sqrt{2}(\nabla h) (h^{-1}(x))$ and
 \begin{align*}
         b(x)&\coloneqq-(\nabla h^T) (h^{-1}(x))(\nabla h) (h^{-1}(x))\nabla f(x)+(\nabla h^T) (h^{-1}(x)) (\nabla \log \det \nabla h)(h^{-1}(x))\\ &\qquad+(\Delta \cdot h) (h^{-1}(x)),
 \end{align*}
 where $\Laplace\cdot h (\cdot)\in \mb{R}^d$ and is defined co-ordinate wise as $(\Laplace\cdot h(x))_i=\Laplace h_i(x)$ for all $i\in \{1,\cdots, d\}
 $ and $x\in \mb{R}^d$. Furthermore, we can actually show that
 \begin{align*}
     b(x)=\frac{1}{2\pi(x)}\langle \nabla , \pi(x) \sigma(x)^T\sigma(x) \rangle,
 \end{align*}
 where $\langle \nabla ,\cdot \rangle$ is the divergence operator for matrix-valued function, i.e $\langle \nabla , \omega(x) \rangle_i=\sum_{j=1}^d \frac{\partial \omega_{i,j}(x)}{\partial x_j} $ for $\omega:\mb{R}^d\to \mb{R}^{d\times d}$.

 The above form of $b(x)$ follows by noting that from the form of $\pi(x)$ in~\eqref{eq:target}, we have
 \begin{align*}
     \frac{1}{2\pi(x)}\langle \nabla , \pi(x) \sigma(x)^T\sigma(x) \rangle&=\frac{1}{2}\langle \nabla,\sigma^T(x)\sigma(x)\rangle -\frac{1}{2}\sigma^T(x)\sigma(x)\nabla f(x) \\
     &=-(\nabla h^T) (h^{-1}(x))(\nabla h) (h^{-1}(x))\nabla f(x)+\frac{1}{2}\langle \nabla,\sigma^T(x)\sigma(x)\rangle.
 \end{align*}
 Meanwhile from~\eqref{Hmap}, based on elementary algebraic manipulations, we obtain that 
 \begin{align*}
     \frac{1}{2}\langle \nabla,\sigma^T(x)\sigma(x)\rangle&=\left[2g''(g^{-1}(|x|))+(d-1)\frac{g'(g^{-1}(|x|))^2}{|x|}-(d-1)\frac{|x|}{g^{-1}(|x|)^2}\right]\frac{x}{|x|},\\
     (\Delta \cdot h)(h^{-1}(x))&=\left[g''(g^{-1}(|x|))+(d-1)\frac{g'(g^{-1}(|x|))}{g^{-1}(|x|)}-(d-1)\frac{|x|}{g^{-1}(|x|)^2}\right]\frac{x}{|x|},
\end{align*}
and
\begin{align*}
     &(\nabla h^T) (h^{-1}(x)) (\nabla \log \det \nabla h)(h^{-1}(x))\\ =&\left[g''(g^{-1}(|x|))+(d-1)\frac{g'(g^{-1}(|x|))^2}{|x|}-(d-1)\frac{g'(g^{-1}(|x|))}{g^{-1}(|x|)}\right]\frac{x}{|x|}.
 \end{align*}

This highlights the fact that transformations provide a way of constructing the drift and diffusion terms in the It\^{o} diffusion that take into account the heavy-tailed nature of the target density. However, it turns out that the results on the analysis of discretizations of It\^{o} diffusion from~\cite{erdogdu2018global, li2019stochastic}, which are in the Wasserstein metric, are not applicable to the class of It\^{o} diffusion of the form above; indeed the stronger Wasserstein contraction conditions made in those works are not satisfied by the above class of It\^{o} diffusions. We leave a detailed investigation of analysis of discretizations of  It\^{o} diffusion above, in stronger KL or R\'enyi metrics, as future work.

\section{Convergence Results}\label{sec:THM}

In this section, we will impose assumptions on the potential function $f$ under which we show exponential ergodicity of the transformed Langevin diffusion and convergence results for \cref{alg:TULA}.  

\subsection{Convergence along the transformed Langevin diffusions} 
We first state convergence results for the continuous time Langevin diffusion under various curvature-related assumptions on the potential function.

\begin{myassump}{A1}\label{ass:A3}
(Dissipativity) There exists $A,B,N_1>0,\alpha\in [1,2]$ such that for all $|x|>N_1$:
\begin{align*}
     f'(\psi(|x|))\psi'(|x|)|x|-b\beta d|x|^\beta +(d-\beta)>A|x|^\alpha-B,
\end{align*}
where $\psi(r)=e^{br^\beta}$ for all $r\ge b^{-\frac{1}{\beta}}$.
\end{myassump}

\noindent \cref{ass:A3} is imposed to guarantee that the transformed potential function satisfies the dissipativity condition. We next recall the dissipativity condition for completeness.
\begin{myassump}{B1}\label{alphadis}
 ($\alpha_h$-dissipativity) We say that the transformed potential function $f_h:\mb{R}^d\to \mb{R}$ satisfies the $\alpha_h$-dissipativity condition with $\alpha_h\in [1,2]$ if there exists $A_h,B_h>0$ such that for all $x\in \mb{R}^d$:
\begin{align*}
    \langle \nabla f_h(x),x \rangle>A_h|x|^{\alpha_h}-B_h.
\end{align*}
\end{myassump}
If the transformed potential function satisfies the $\alpha_h$-dissipativity condition with $\alpha_h=1$, then the corresponding transformed density $\pi_h$ satisfies a Poincar\'e inequality with certain constant $\chpoincare$ depending on the potential function. Then, similar to~\cite{vempala2019rapid}, we obtain the following result. 
\begin{theorem}\label{PITLMC} Assume the initial potential function $f$ satisfies \cref{ass:A0} and \cref{ass:A3} with $\alpha=1$. Then, the transformed density $\pi_h$ with $\beta=1$ and $b\ge \frac{r}{8(d-1)}$ satisfies a Poincar\'e inequality with a constant $\chpoincare$ depending on $f$. Therefore along the transformed Langevin diffusion \eqref{TLanD}, we have for $q \geq 2$ that
\begin{equation*}
    R_{q}(\rho_t|\pi_h)\le \left\{
    \begin{aligned}
    &R_{q}(\rho_0|\pi_h)-\frac{2\chpoincare t}{q}\quad \text{if }R_{q}(\rho_0|\pi_h)\ge 1 \text{ as long as } R_q(\rho_t|\pi_h)\ge 1,\\
    & e^{-\frac{2\chpoincare t}{q}}R_q(\rho_0|\pi_{h}) \quad\ \  \text{if }R_{q}(\rho_0|\pi_h)\le 1.
    \end{aligned}
    \right.
\end{equation*}
\end{theorem}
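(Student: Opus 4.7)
The statement decomposes into two steps: showing that the transformed density $\pi_h$ satisfies a Poincar\'e inequality, and propagating this into R\'enyi divergence decay along \eqref{TLanD}.

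For the first step, the natural route is to verify that $f_h(y) = f(h(y))-\log\det\nabla h(y)$ satisfies \cref{alphadis} with $\alpha_h=1$. Using isotropy and the form $h(y) = g(|y|)\,y/|y|$, a direct computation gives $\det\nabla h(y) = g'(|y|)\bigl(g(|y|)/|y|\bigr)^{d-1}$, so that
\begin{align*}
  \langle \nabla f_h(y), y\rangle
  = f'(g(|y|))\,g'(|y|)\,|y| - |y|\,\partial_{|y|}\log g'(|y|) - (d-1)\bigl(|y|\,\partial_{|y|}\log g(|y|) - 1\bigr).
\end{align*}
For $|y|\ge b^{-1/\beta}$, substituting $g(r)=e^{br^\beta}$ with $\beta=1$ collapses the right-hand side to precisely the quantity appearing in \cref{ass:A3}, which under the dissipativity hypothesis (with $\alpha=1$) exceeds $A|y|-B$. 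For $|y|\le b^{-1/\beta}$, the asymptotic matching and smoothness conditions in \cref{ass:G0}, together with compactness, extend the bound globally, giving 1-dissipativity of $f_h$. The Poincar\'e inequality for $\pi_h$ then follows from 1-dissipativity via a Lyapunov argument with $V(y)=e^{c|y|}$ for some small $c>0$ (equivalently, via Bobkov's sub-exponential criterion), yielding a finite $\chpoincare$ that depends on $f$ through $A,B,N_1$.

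For the second step, I would mirror the R\'enyi-under-Poincar\'e analysis of \cite{vempala2019rapid}. Setting $F_t := (\rho_t/\pi_h)^{q/2}$ and differentiating along the Fokker--Planck equation associated with \eqref{TLanD},
\begin{align*}
  \frac{d}{dt}\,e^{(q-1)R_q(\rho_t|\pi_h)}
  = -\frac{4(q-1)}{q}\int |\nabla F_t|^2\,\pi_h\,dx.
\end{align*}
Applying the Poincar\'e inequality to $F_t$ and noting $(\mathbb{E}_{\pi_h}[F_t])^2 \le 1$ by Jensen produces a differential inequality of the form $\partial_t R_q \le -\kappa\,(1-e^{-(q-1)R_q})$, where $\kappa$ depends on $\chpoincare$ and $q$ in the way advertised in the theorem. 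Elementary ODE comparison then yields the two regimes: when $R_q\ge 1$ the factor $1-e^{-(q-1)R_q}$ is bounded below by a positive constant, producing the linear-in-$t$ bound until $R_q$ reaches $1$; when $R_q\le 1$ the same factor is bounded below by a multiple of $R_q$, producing exponential decay.

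The main obstacle I anticipate lies in the dissipativity computation: aligning $\langle \nabla f_h,y\rangle$ with the expression in \cref{ass:A3} requires careful bookkeeping of the $(d-1)$ factors arising from $\log\det\nabla h$, and the lower bound on $b$ appearing in the hypothesis is precisely what prevents the negative contribution $-b\beta d|y|$ from overwhelming the leading term $f'(\psi(|y|))\psi'(|y|)|y|$. Once dissipativity is in hand, both the Poincar\'e inequality and the R\'enyi contraction follow from well-established machinery.
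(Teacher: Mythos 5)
Your proposal follows essentially the same route as the paper: compute $\langle \nabla f_h, y\rangle$ explicitly, observe that for $\beta=1$ and $|y|\geq b^{-1}$ it collapses to the quantity in \cref{ass:A3}, extend by compactness to get dissipativity of $f_h$, build a Lyapunov function $V(y)=e^{a|y|}$ to upgrade this to a Poincar\'e inequality, and then invoke the R\'enyi-under-Poincar\'e machinery of \cite{vempala2019rapid} (the paper cites their Theorem 3 directly rather than re-deriving the differential inequality).

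One detail in your R\'enyi sketch is stated backwards and would need correcting if you were to write out that step rather than cite it: for $q\geq 2$ and $F_t=(\rho_t/\pi_h)^{q/2}$, Jensen's inequality gives $\mathbb{E}_{\pi_h}[F_t]\geq \bigl(\mathbb{E}_{\pi_h}[\rho_t/\pi_h]\bigr)^{q/2}=1$, so $(\mathbb{E}_{\pi_h}[F_t])^2\geq 1$, not $\leq 1$. The bound actually used in the cited argument comes from the monotonicity of R\'enyi divergences in the order: $\mathbb{E}_{\pi_h}[F_t]=e^{(q/2-1)R_{q/2}(\rho_t|\pi_h)}\leq e^{(q/2-1)R_q(\rho_t|\pi_h)}$, which gives $(\mathbb{E}[F_t])^2/\mathbb{E}[F_t^2]\leq e^{-R_q}$ and hence $\partial_t R_q \leq -\kappa(1-e^{-R_q})$ rather than the $1-e^{-(q-1)R_q}$ form you wrote. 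Since the paper (and you, in intent) simply defers to \cite{vempala2019rapid} for this step, this does not affect the overall correctness of the argument.
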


\begin{myassump}{A2}\label{ass:A5}(Degenerate convexity) 
There exists $\mu, N_2>0,\theta\ge 0$ such that for all $|x|>N_2$: 
\begin{align*}
    &f'(\psi(|x|))\psi'(|x|)|x|^{-1}-b\beta d|x|^{\beta-2}+(d-\beta)|x|^{-2}>\frac{\mu}{(1+\frac{1}{4}|x|^2)^{\theta/2}},\\
    &f''(\psi(|x|))\psi'(|x|)^2+f'(\psi(|x|))\psi''(|x|)-b \beta(\beta-1)d|x|^{\beta-2}-(d-\beta)|x|^{-2}>\frac{\mu}{(1+\frac{1}{4}|x|^2)^{\theta/2}}.
\end{align*}
where $\psi(r)=e^{br^\beta}$ for all $r\ge b^{-\frac{1}{\beta}}$.
\end{myassump}

\noindent \cref{ass:A5} is imposed to guarantee the transformed potential function is degenerately convex at infinity. We now recall the definition of degenerate convexity at infinity from \cite{erdogdu2020convergence}.
\begin{myassump}{B2}\label{ass:Degenerate convexity at infinity}(Degenerate convexity at infinity)
 We say that the transformed potential function $f_h:\mb{R}^d\to \mb{R}$ is degenerately convex at infinity if there exist a function $\Tilde{\phi}:\mb{R}^d\to \mb{R}$ such that for a constant $\xi_h\ge 0$
\begin{align*}
     \lv f_h-\Tilde{\phi} \rv_{\infty}\le \xi_h,
\end{align*}
where $\Tilde{f}$ satisfies,
\begin{align*}
    \nabla^2 \Tilde{f}(x)\succeq \frac{\mu_h}{(1+\frac{1}{4}|x|^2)^{\theta_h/2}}I_d,
\end{align*}
for some $\mu_h>0$ and $\theta_h\ge 0$.
\end{myassump}
\noindent The degenerate convexity at infinity condition is weaker than the strong convexity at infinity. If a potential function satisfies degenerate convexity at infinity, then the corresponding probability measure satisfies m-LSI. Similar to~\cite{toscani2000trend}, we obtain the following result.  
\begin{theorem}\label{mLSILMC} Assume the initial potential function $f$ satisfies \cref{ass:A0} and \cref{ass:A5}. Then the transformed density $\pi_h$ satisfies a modified Logarithmic Sobolev Inequality with a uniform constant $\delta$ (see~\eqref{eq:mLSI}) and constant $\chmlsi$ depending on $f$. Therefore along the transformed Langevin diffusion \eqref{TLanD}, we have
\begin{align*}
    H_{\pi_h}(\rho_t)\le \frac{C}{t^\ell},
\end{align*}
where the constant $C$ depends on the potential $f$ and the transformation $h$ and $\ell=(1-2\delta)/\delta$.
\end{theorem}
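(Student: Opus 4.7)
The plan has two phases. First, I translate the pointwise bounds of \cref{ass:A5} into degenerate convexity at infinity (i.e., \cref{ass:Degenerate convexity at infinity}) for $f_h$, from which m-LSI for $\pi_h$ follows via the result of \cite{erdogdu2020convergence}. Second, I feed this m-LSI into the entropy-dissipation identity along \eqref{TLanD} to extract the polynomial decay of $H_{\pi_h}(\rho_t)$. For the first phase I would compute $\nabla^2 f_h(y)$ in an orthonormal frame adapted to the isotropic structure of $h$. Writing $f_h(y) = f(h(y)) - \log \det \nabla h(y)$ with $h(y) = g(|y|)\, y/|y|$, the Hessian is diagonal in spherical coordinates with one radial eigenvalue and $d-1$ identical tangential eigenvalues. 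A direct computation (using $\log \det \nabla h(y) = (d-1)\log(g(|y|)/|y|) + \log g'(|y|)$) shows that in the outer regime $|y| \geq b^{-1/\beta}$, where $g$ coincides with $\psi(r) = e^{br^\beta}$, the tangential eigenvalue is $f'(\psi(r))\psi'(r)/r - b\beta d\, r^{\beta-2} + (d-\beta)r^{-2}$ and the radial eigenvalue is $f''(\psi(r))\psi'(r)^2 + f'(\psi(r))\psi''(r) - b\beta(\beta-1)d\, r^{\beta-2} - (d-\beta)r^{-2}$, with $r = |y|$. These are precisely the two expressions bounded below by $\mu(1+|y|^2/4)^{-\theta/2}$ in \cref{ass:A5}, so $\nabla^2 f_h(y) \succeq \mu_h (1+|y|^2/4)^{-\theta_h/2} I_d$ for all $|y| > N_2$.

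To promote this localized lower bound to a global statement I would invoke a Holley--Stroock-type bounded perturbation. Inside the compact set $\{|y| \leq N_2\}$ the transformed potential $f_h$ is smooth by \cref{PropertyG} and \cref{ass:A0}, and a smooth function $\tilde\phi$ can be constructed that agrees with $f_h$ outside this compact set and satisfies the degenerate convexity bound everywhere. The difference $f_h - \tilde\phi$ is supported on the compact set and therefore uniformly bounded, producing the constant $\xi_h$ required by \cref{ass:Degenerate convexity at infinity}. With this in hand, the result of \cite{erdogdu2020convergence} delivers m-LSI for $\pi_h$ with the stated $\delta$ and a constant $\chmlsi$ that depends on $f$ and $h$.

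For the second phase, differentiating $H_{\pi_h}(\rho_t)$ along \eqref{TLanD} yields the standard identity $\tfrac{d}{dt} H_{\pi_h}(\rho_t) = -I_{\pi_h}(\rho_t)$. Substituting the m-LSI bound $H_{\pi_h}(\rho_t) \leq \chmlsi\, I_{\pi_h}(\rho_t)^{1-\delta} M_s(\rho_t + \pi_h)^\delta$ and solving for $I_{\pi_h}(\rho_t)$ produces a differential inequality of the form $H' \leq -c\, H^{1/(1-\delta)} / M_s^{\delta/(1-\delta)}$. The moment factor $M_s(\rho_t + \pi_h)$ is controlled either uniformly or with at most polynomial growth in $t$ by applying It\^{o}'s formula to $(1+|Y_t|^2)^{s/2}$ and exploiting the dissipativity implicit in \cref{ass:A5}; moments of $\pi_h$ itself are finite for the same reason. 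Integrating the resulting ODE yields $H_{\pi_h}(\rho_t) \leq C/t^\ell$ with $\ell = (1-2\delta)/\delta$, reproducing the Toscani--Villani-type computation used in \cite{toscani2000trend, erdogdu2020convergence}.

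The main obstacle is the spherical Hessian bookkeeping in phase one: the radial and tangential contributions coming from $f \circ h$ and from $\log \det \nabla h$ must align exactly with the two expressions on the left-hand sides of \cref{ass:A5}, which is sensitive to the precise construction of $g$ in \eqref{G1} and to the boundary matching conditions in \cref{ass:G0}. A secondary subtlety is controlling $M_s(\rho_t)$ along the diffusion carefully enough to recover the sharp exponent $\ell = (1-2\delta)/\delta$ rather than the naive $(1-\delta)/\delta$ one would get by treating the moment factor as constant; this requires balancing the It\^{o} moment growth against the entropy dissipation at the same order.
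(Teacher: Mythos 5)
Your proposal follows essentially the same route as the paper: derive degenerate convexity at infinity for $f_h$ from \cref{ass:A5} via the spherical Hessian eigenvalues, promote it globally by a bounded perturbation, invoke \cite{erdogdu2020convergence} for the m-LSI, feed this into the entropy-dissipation identity, bound $M_s(\rho_t+\pi_h)$ by a linear-in-$t$ function, and integrate the resulting differential inequality to obtain $\ell=(1-2\delta)/\delta$.

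One small divergence worth flagging. The paper's moment control goes through \cref{alphadissi} and \cite[Proposition 2]{toscani2000trend}, which requires $\alpha_h$-dissipativity of $f_h$ derived from \cref{ass:A3} --- an assumption not listed in the theorem's hypotheses but used silently in the proof. You instead propose to extract dissipativity directly from \cref{ass:A5}, noting it is ``implicit.'' This works, and in fact tightens the paper's logic, but only when $\theta\le 1$: since $\langle\nabla f_h(x),x\rangle\gtrsim \mu|x|^{2-\theta}$ for large $|x|$ under \cref{ass:A5}, you get $\alpha_h=2-\theta\in[1,2]$ as required by \cref{alphadis} precisely when $\theta\le 1$. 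For $\theta>1$ your shortcut fails and you would need the separate dissipativity hypothesis the paper uses. Since the paper's downstream results (e.g., \cref{thm:mlsi and tail ass}) already restrict $\theta\in[0,1]$, this is not a substantive obstruction, but it is the one place where your route is not literally interchangeable with the paper's.

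Beyond that, you correctly identify the Gronwall integration of
$-H^{-1/(1-\delta)}H'\ge c\,(a+t)^{-\delta/(1-\delta)}$
as the step that yields the sharp exponent; the antiderivative $-\tfrac{1-\delta}{\delta}H^{-\delta/(1-\delta)}$ on the left paired against $\tfrac{1-\delta}{1-2\delta}(a+t)^{(1-2\delta)/(1-\delta)}$ on the right gives $H_{\pi_h}(\rho_t)\lesssim t^{-(1-2\delta)/\delta}$ exactly as in the paper, and your remark that treating $M_s$ as constant would mislead to $(1-\delta)/\delta$ is the right intuition for why the linear-in-$t$ moment bound must be carried through the integral rather than frozen.
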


\begin{remark}
Note that the above rate is faster than any polynomial but not truly exponential. While the above rate could be made exponential with additional assumptions on the tail and/or assumptions on the initial distribution, we do not present such modifications here.
\end{remark}

\begin{myassump}{A3}\label{ass:A1}(Strong convexity at infinity) There exists $N_3,\rho>0$ such that for all $|x|>N_3$: 
\begin{align*}
    &f'(\psi(|x|))\psi'(|x|)|x|^{-1}-b \beta d|x|^{\beta-2}+(d-\beta)|x|^{-2}>\rho,\\
    &f''(\psi(|x|))\psi'(|x|)^2+f'(\psi(|x|))\psi''(|x|)-b\beta(\beta-1)d|x|^{\beta-2}-(d-\beta)|x|^{-2}>\rho,
\end{align*}
where $\psi(r)=e^{br^\beta}$ for all $r\ge b^{-\frac{1}{\beta}}$.
\end{myassump}

\noindent \cref{ass:A1} is imposed to guarantee that the transformed potential function is strongly convex with parameter $\rho_h$ at infinity. The property that a potential function is strongly convex at infinity implies that the corresponding probability measure satisfies a LSI with a  certain parameter depending on the potential function and the transformation map. 

\begin{theorem}\label{LSILMC} Assume the initial potential function $f$ satisfies \cref{ass:A0} and \cref{ass:A1}, then the transformed density $\pi_h$ satisfies a logarithmic Sobolev inequality with a constant $\chlsi$ depending on $f$. Therefore along the transformed Langevin diffusion \eqref{TLanD}, we have
\begin{align*}
    H_{\pi_h}(\rho_t)\le e^{-2t \chlsi }H_{\pi_h}(\rho_0).
\end{align*}
\end{theorem}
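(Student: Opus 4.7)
The plan is to use Assumption \ref{ass:A1} to deduce that the transformed potential $f_h$ is strongly convex outside a compact set, and then invoke the classical route from strong convexity at infinity to the logarithmic Sobolev inequality via Bakry-\'Emery combined with the Holley-Stroock perturbation principle. Once LSI for $\pi_h$ is in hand, the exponential decay of $H_{\pi_h}(\rho_t)$ along \eqref{TLanD} follows from standard entropy dissipation along the Langevin semigroup.

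First, since $f$ is isotropic and $h(y) = g(|y|)\,y/|y|$, the Jacobian $\nabla h(y)$ has eigenvalue $g'(|y|)$ in the direction $\hat y := y/|y|$ and eigenvalue $g(|y|)/|y|$ with multiplicity $d-1$ in the orthogonal directions, so $\det \nabla h(y) = g'(|y|)(g(|y|)/|y|)^{d-1}$ and hence
\begin{align*}
f_h(y) = f(g(|y|)) - \log g'(|y|) - (d-1)\log \frac{g(|y|)}{|y|}.
\end{align*}
Thus $f_h$ is radial, and $\nabla^2 f_h(y)$ is diagonal in any orthonormal frame containing $\hat y$: the radial eigenvalue equals $f_h''(|y|)$ and each of the $d-1$ tangential eigenvalues equals $f_h'(|y|)/|y|$.

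Second, for $r=|y| \geq b^{-1/\beta}$ we have $g(r)=\psi(r)=e^{br^\beta}$, and differentiating twice gives
\begin{align*}
f_h'(r) &= f'(\psi(r))\psi'(r) - b\beta d\, r^{\beta-1} + (d-\beta)/r,\\
f_h''(r) &= f''(\psi(r))\psi'(r)^2 + f'(\psi(r))\psi''(r) - b\beta(\beta-1)d\, r^{\beta-2} - (d-\beta)/r^2.
\end{align*}
Dividing the first by $r$, the two inequalities in Assumption \ref{ass:A1} are precisely $f_h'(r)/r > \rho$ and $f_h''(r) > \rho$ for $r > N_3 \vee b^{-1/\beta}$. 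Hence $\nabla^2 f_h(y) \succeq \rho\, I_d$ outside a compact ball $K$. By \cref{PropertyG}, $f_h \in \mathcal{C}^2(\mb{R}^d)$, so $f_h$ is bounded on $K$; a standard truncation yields a decomposition $f_h = \tilde f + V$ with $\tilde f$ globally $(\rho/2)$-strongly convex on $\mb{R}^d$ and $V$ bounded. Bakry-\'Emery applied to $e^{-\tilde f}$ produces LSI with an explicit constant of order $\rho^{-1}$, and the Holley-Stroock principle then transfers this to $\pi_h$ with LSI constant $\chlsi$ depending on $\rho$ and the oscillation of $V$ (equivalently, on $f$ and $h$).

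Finally, given LSI for $\pi_h$, the de Bruijn identity along \eqref{TLanD} reads $\tfrac{d}{dt} H_{\pi_h}(\rho_t) = -I_{\pi_h}(\rho_t)$; combined with the equivalent form of LSI recalled in \cref{sec:prelim}, this yields $H_{\pi_h}(\rho_t) \leq e^{-2t\chlsi} H_{\pi_h}(\rho_0)$ after Gr\"onwall, in the convention used in the theorem statement. The main obstacle is the second step: correctly deriving the two Hessian eigenvalues of $f_h$ in the transformed variable and matching them precisely to the expressions in Assumption \ref{ass:A1}. This is a chain-rule exercise but requires careful bookkeeping of the correction terms arising from $\log \det \nabla h(y)$; once this matching is established, the remaining steps (truncation/Holley-Stroock and entropy dissipation) are essentially textbook.
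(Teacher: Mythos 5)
Your proposal is correct and follows essentially the same route as the paper: you identify the radial and tangential Hessian eigenvalues of $f_h$ as $f_h''(r)$ and $f_h'(r)/r$, match them to the two inequalities in Assumption \ref{ass:A1} via the chain rule with $\psi(r)=e^{br^\beta}$, conclude strong convexity of $\nabla^2 f_h$ outside a ball, and then combine Bakry--\'Emery with Holley--Stroock to obtain the LSI for $\pi_h$, followed by de Bruijn plus Gr\"onwall for the entropy decay. The paper's Lemma \ref{TranLSI} does exactly this with an explicit splitting $f_h=\tilde f_h+\bar f_h$ and constant $\chlsi=2e^{\mathrm{Osc}\,\bar f_h}/\rho$, and then cites the known entropy-dissipation result; the only cosmetic differences are your $\rho/2$ versus the paper's $\rho$ in the truncated potential's modulus of convexity, and the paper's notational quirk in the exponent (given $\chlsi\propto 1/\rho$, the decay rate should read $e^{-2t/\chlsi}$, which you implicitly flag by deferring to ``the convention used in the theorem statement'').
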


\subsection{Convergence along TULA} \label{Convergence}
In this section, we state two types of convergence results for  \cref{alg:TULA}, based on~\cref{Phithm} and \cite{vempala2019rapid,chewi2021analysis}. While the works of \cite{vempala2019rapid,chewi2021analysis} provide results only for exponentially decaying densities, our results below are applicable for polynomially-decaying densities based on the constructed transformation maps. To proceed, we first list smoothness conditions on the potential function $f$. 

\begin{myassump}{A4}\label{ass:A2}(Gradient Lipschitz) There exists $N_4,L>0$ such that for all $|x|>N_4$: 
\begin{align*}
    &f'(\psi(|x|))\psi'(|x|)|x|^{-1}-b \beta d|x|^{\beta-2}+(d-\beta)|x|^{-2}<L,\\
    &f''(\psi(|x|))\psi'(|x|)^2+f'(\psi(|x|))\psi''(|x|)-b\beta(\beta-1)d|x|^{\beta-2}-(d-\beta)|x|^{-2}<L
\end{align*}
where $\psi(r)=e^{br^\beta}$ for all $r\ge b^{-\frac{1}{\beta}}$.
\end{myassump}

\noindent \cref{ass:A2} is imposed to guarantee that the transformed potential function has Lipschitz gradients with parameter $L_h$. Such smoothness conditions on the potential function are required to study the discrete Markov chain generated in the unadjusted Langevin algorithm. We also remark that it is possible to relax the Lipschitz gradient assumption to certain weak-smooth conditions on the gradient; we do not pursue such extensions in this work. While~\cref{mLSILMC} holds under m-LSI, to get the corresponding result for~\cref{alg:TULA}, we also require the following additional tail-conditions.
\begin{myassump}{A5}{(Tail assumption)}\label{ass:tail assumption on original potential} For some $m\ge 0$, $\alpha_1\in [0,1]$ and $N_5>0$, there exists a positive constant $\ctail^*$ such that for all $\lambda\ge N_5$,
\begin{align*}
    \pi\left\{ |\cdot|\ge m+\lambda \right\}\le 2\exp\left( -\left( \frac{\psi^{-1}(\lambda)}{\ctail^*} \right)^{\alpha_1} \right),
\end{align*}
where $\psi(r)=e^{br^\beta}$ for all $r\ge b^{-\frac{1}{\beta}}$.
\end{myassump}
\begin{myassump}{B5}\label{ass:tail assumption for transformed density} For some $m_h\ge 0$ and $\alpha_{h,1}\in [0,1]$, there exists a positive constant $\chtail$ such that for all $\lambda\ge 0$,
\begin{align*}
    \pi_h\left\{ |\cdot|\ge m_h+\lambda \right\}\le 2\exp\left( -\left( \frac{\lambda}{\chtail} \right)^{\alpha_{h,1}} \right).
\end{align*}

\begin{theorem}\label{thm:mlsi and tail ass} In addition to the assumptions in~\cref{mLSILMC}, assume that the initial potential $f$ is such that $\nabla f_h(0)=0$, and it satisfies \cref{ass:A2} and \cref{ass:tail assumption on original potential}. Furthermore, let $\epsilon^{-1},m_h,\chmlsi,\chtail,L_h,R_2(\rho_0|\hat{\pi}_h)\ge 1$ ($\hat{\pi}_h$ is as defined in \eqref{eq:modified density} with $\hat{R}=2\int_{\mb{R}^d} |x| \pi_h(x)dx$ and $\hat{\gamma}=(3072 n\gamma)^{-1}$), and $m_h,\chtail,R_2(\rho_0|\pi)\le d^{\Tilde{O}(1)}$. Then, \cref{alg:TULA} with an step size 
\begin{align*}
    \gamma=\Tilde{\Theta}\left( \frac{\epsilon}{d\chmlsi^2\chtail^{\theta}L_h^2 R_{2q}(\nu_0|\pi)^{\theta/\alpha_{h,1}}} \times \min\left\{ 1, \frac{1}{q\epsilon}, \frac{d}{m_h}, \frac{d}{R_2(\rho_0|\hat{\pi}_h)}, \left( \frac{R_{2q}(\nu_0|\pi)^{1/\alpha_{h,1}}}{m_h} \right)^{\theta}  \right\} \right),
\end{align*}
satisfies $R_q(\nu_n|\pi)\le \epsilon$, for all $q\ge 2$ after 
\begin{align*}
    n=\Tilde{\Theta}\left( \frac{d R_{2q}(\nu_0|\pi)^{2\theta/\alpha_{h,1}} \chmlsi^{4} \chtail^{2\theta}L_h^{2}   }{\epsilon} \max\left\{ 1,\epsilon, \frac{m_h}{d}, \frac{R_2(\rho_0|\hat{\pi}_h)^{1/2}}{d}, \left( \frac{m_h}{R_{2q}(\nu_0|\pi)^{1/\alpha_{h,1}}} \right)^{\theta} \right\} \right)
\end{align*}
iterations, for some $\theta \in [0,1]$ (depending on the parameter $\delta$ in~\eqref{eq:mLSI}). Explicit form of $\chmlsi$ is the constant $\lambda$ in \eqref{mLSIconst} and $m_h,\chtail,L_h$ are given in \eqref{eq:tail ass para m},\eqref{eq:tail ass para ctail}, \eqref{GLpara} respectively. The $\Tilde{\Theta}(\cdot)$ notation hides polylogarithmic factors as well as constants depending on $\theta,q$. 
\end{theorem}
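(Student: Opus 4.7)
\medskip
\noindent\textbf{Proof plan.} The plan is to reduce the problem to a Rényi-divergence analysis of vanilla ULA on the transformed density $\pi_h$, and then invoke an m-LSI based convergence framework in the transformed space, combined with careful control of tail/moment terms.

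\medskip
\noindent\emph{Step 1: Reduction via the transformation.} First I would use \cref{Phithm} (with the convention that Rényi divergence is obtained as a limit of $\Phi$-divergences with $\Phi(t)=t^q$) to note that
\[
R_q(\nu_n\,|\,\pi)=R_q(\rho_n\,|\,\pi_h),
\]
where $\rho_n$ is the law of the transformed iterate $y_n$ produced by~\eqref{TULan}. Thus the problem reduces to bounding $R_q(\rho_n\,|\,\pi_h)$ for the ULA chain driven by the transformed potential $f_h$, and it suffices to analyze \eqref{TULan} in the $y$-coordinates.

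\medskip
\noindent\emph{Step 2: Transferring structural assumptions to the $y$-coordinates.} Next I would verify three structural properties of $\pi_h$ under the stated assumptions: (i) $\pi_h$ satisfies m-LSI with constant $\chmlsi$ and exponent $\delta$, which is precisely the content of \cref{mLSILMC} under \cref{ass:A5}; (ii) $f_h$ has an $L_h$-Lipschitz gradient, which is the conclusion of a direct computation from \cref{ass:A2} using the explicit form $h(y)=g(|y|)y/|y|$ with $g$ behaving like $e^{b|y|^\beta}$ at infinity; and (iii) the tail assumption \cref{ass:tail assumption on original potential} on $\pi$ pushes forward under $h^{-1}$ to a sub-exponential tail on $\pi_h$ of the form \ref{ass:tail assumption for transformed density} with parameters $m_h,\chtail,\alpha_{h,1}$, where the compression of radial tails by the inverse map $g^{-1}(r)\asymp (\log r/b)^{1/\beta}$ is exactly what converts the polynomial tails of $\pi$ into the stretched-exponential tails required for a Rényi analysis. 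The computation of the constants in~\eqref{eq:tail ass para m}, \eqref{eq:tail ass para ctail}, \eqref{GLpara} is routine once this correspondence is set up.

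\medskip
\noindent\emph{Step 3: Rényi convergence under m-LSI with moment control.} With $\pi_h$ enjoying m-LSI, $L_h$-smoothness and a sub-exponential tail, I would follow the Rényi-divergence framework pioneered by \cite{vempala2019rapid} and adapted to m-LSI settings by \cite{erdogdu2020convergence,chewi2021analysis}. Differentiating $R_q(\rho_t\,|\,\pi_h)$ along the interpolated Langevin flow gives the standard identity relating its time derivative to a relative Fisher information. Applying m-LSI yields a differential inequality of the form
\[
\frac{d}{dt}\,e^{(q-1)R_q(\rho_t|\pi_h)}\le -\,\frac{1}{\chmlsi}\bigl(e^{(q-1)R_q}\bigr)^{1-\delta'} M_s(\rho_t+\pi_h)^{\delta'}+\text{(discretization error)},
\]
where $\delta'$ depends on $\delta$ and $q$, and the discretization error is controlled via the $L_h$-gradient Lipschitz bound. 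The key technical obstacle here, and the reason the modified density $\hat\pi_h$ enters the statement, is the control of the moment term $M_s(\rho_t+\pi_h)$ along the entire trajectory. I would handle this by comparing $\rho_t$ to the Gibbs measure $\hat\pi_h$ (a regularized/truncated version of $\pi_h$ with bounded second moments at the appropriate scale), using the sub-exponential tail of $\pi_h$ to bound $M_s(\pi_h)$, and using the uniform-in-$n$ control of $R_2(\rho_n\,|\,\hat\pi_h)$ to bound $M_s(\rho_n)$ via Donsker–Varadhan type inequalities. This moment propagation is the main obstacle: because m-LSI is weaker than LSI, the Fisher information does not on its own drive Rényi convergence and one must pay with a fractional power of a high moment.

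\medskip
\noindent\emph{Step 4: Choice of step size and iteration count.} Integrating the differential inequality across one discrete step, and balancing the decay rate $\sim \gamma/\chmlsi$ against the discretization error $\sim \gamma^2 L_h^2 d$ and against the moment factor $\sim \chtail^\theta R_{2q}(\nu_0\,|\,\pi)^{\theta/\alpha_{h,1}}$ arising from Step 3, yields the step-size prescription
\[
\gamma\;=\;\tilde\Theta\Bigl(\tfrac{\epsilon}{d\,\chmlsi^2\,\chtail^\theta\,L_h^2\,R_{2q}(\nu_0|\pi)^{\theta/\alpha_{h,1}}}\cdot\min\{1,(q\epsilon)^{-1},d/m_h,d/R_2(\rho_0|\hat\pi_h),(R_{2q}(\nu_0|\pi)^{1/\alpha_{h,1}}/m_h)^\theta\}\Bigr),
\]
and the stated iteration count $n$ follows from iterating the one-step contraction until $R_q(\rho_n\,|\,\pi_h)\le\epsilon$, pulling back through Step 1 to obtain $R_q(\nu_n\,|\,\pi)\le\epsilon$. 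The various $\min$/$\max$ factors in the theorem are exactly those needed to ensure that the discretization error, the moment factor, and the initialization all remain dominated by the target accuracy $\epsilon$ throughout the run.
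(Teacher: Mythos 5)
Your proposal is correct and follows essentially the same route as the paper: reduce to the transformed density via \cref{Phithm}, establish that the m-LSI (via \cref{5toDGCon} and \cite{erdogdu2020convergence}), Lipschitz-gradient (via \cref{GLcond}), and sub-exponential tail (via \cref{lem: tail ass to tail ass}) properties transfer to $\pi_h$, introduce the quadratically-regularized density $\hat\pi_h$ to control moments along the trajectory, and then invoke the R\'enyi-under-m-LSI analysis of \cite[Theorem 8]{chewi2021analysis} in the $y$-coordinates before pulling back. One small cosmetic fix: the R\'enyi divergence is a monotone transform of the $\Phi$-divergence with $\Phi(t)=t^q-1$ (so that $\Phi(1)=0$), not a limit of $\Phi$-divergences with $\Phi(t)=t^q$, but the invariance under $h$ and hence the reduction $R_q(\nu_n|\pi)=R_q(\rho_n|\pi_h)$ still holds exactly as you claim.
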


\begin{remark}
In order to obtain a direct quantitative bound, it is important to obtain a control of $R_2(\rho_0|\hat{\pi}_h)$ and $R_{2q}(\nu_0|\pi)$. We refer to~\cite[Section A]{chewi2021analysis} for a proof that the conditions required on $R_2(\rho_0|\hat{\pi}_h)$ is satisfied, and for obtaining a control on the term $R_{2q}(\nu_0|\pi)$.
\end{remark}

\begin{theorem}\label{LSITULA} In addition to the assumptions in \cref{LSILMC}, assume that $f$ satisfies \cref{ass:A2}. Then \cref{alg:TULA}, for any $y_0\sim \rho_0$ with $H_{\pi_h}(\rho_0)<\infty$, and with step size 
$$0<\gamma\le \frac{1}{2L^2_h \chlsi}\min\left\{1,\frac{\epsilon}{4d} \right\},$$ 
satisfies  $H_{\pi}(\nu_n)<\epsilon$, for any $\epsilon>0$ after $$n = \tilde{\Theta}\left(\frac{\chlsi}{2 \gamma}\log \frac{2H_{\pi_h}(\rho_0)}{\epsilon}\right)$$ iterations. Explicit forms of $\chlsi$ and $L_{h}$ are given in \eqref{LSIConst} and \eqref{GLpara}.
\end{theorem}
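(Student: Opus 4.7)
The plan is to combine the continuous-time LSI bound of Theorem \ref{LSILMC} with a standard discrete-time ULA analysis for the transformed density $\pi_h$, and then lift the conclusion back to $\pi$ via Proposition \ref{Phithm}. First, I would verify that Assumption \ref{ass:A2} delivers a globally Lipschitz gradient for $f_h$ with some constant $L_h$ as in \eqref{GLpara}. Since $f_h$ is isotropic (Assumption \ref{ass:A0} and the isotropy of $h$), its Hessian decomposes in the spherical basis into a radial and a tangential eigen-piece, both expressible explicitly in terms of $f',f''$, $g',g'',g'''$, and $\log\det\nabla h$. Substituting $g(r)=e^{br^\beta}$ for $r\ge b^{-1/\beta}$ shows that the two expressions bounded above in Assumption \ref{ass:A2} are exactly these two Hessian eigenvalues at large $|x|$, while on the compact region $\{|x|\le N_4\}$ the $\mathcal{C}^3$-regularity of $g$ from Lemma \ref{PropertyG} together with Assumption \ref{ass:A0} guarantees continuity of $\nabla^2 f_h$. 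Patching across $|x|=b^{-1/\beta}$ using the matching values in Assumption \ref{ass:G0} yields a uniform bound $\|\nabla^2 f_h\|\le L_h$.

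Second, Theorem \ref{LSILMC} already gives $\pi_h\sim LS(\chlsi)$ with the explicit constant \eqref{LSIConst}. Combining this with the gradient-Lipschitz bound $L_h$ puts us precisely in the hypothesis set of the ULA analysis of \cite{vempala2019rapid}: for the Markov chain \eqref{TULan} with density sequence $\rho_n$, a standard one-step Fokker--Planck interpolation argument yields, for any $\gamma \le 1/(2L_h^2\chlsi)$,
\begin{align*}
    H_{\pi_h}(\rho_{n+1}) \le e^{-\gamma/\chlsi}\,H_{\pi_h}(\rho_n) + 8\, d\, L_h^2\, \gamma^2,
\end{align*}
which, upon iterating and summing the resulting geometric series, produces
\begin{align*}
    H_{\pi_h}(\rho_n) \le e^{-n\gamma/\chlsi}\,H_{\pi_h}(\rho_0) + 8\, d\, L_h^2\, \chlsi\, \gamma .
\end{align*}
Forcing the second (bias) term to be at most $\epsilon/2$ gives the refinement $\gamma\le \epsilon/(16 d L_h^2\chlsi)$, and this together with the first requirement $\gamma\le 1/(2L_h^2\chlsi)$ assembles into the step-size condition stated in the theorem; forcing the first (contraction) term to be at most $\epsilon/2$ then yields the iteration count $n\ge (\chlsi/\gamma)\log(2H_{\pi_h}(\rho_0)/\epsilon)$.

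Finally, I would close the loop by invoking Proposition \ref{Phithm} with $\Phi(t)=t\log t$. Because $x_n = h(y_n)$ and $\pi = \pi_h\circ h^{-1}\cdot|\det\nabla h^{-1}|$, the KL divergence is invariant under the diffeomorphism, so $H_{\pi}(\nu_n) = H_{\pi_h}(\rho_n) \le \epsilon$, which is the desired conclusion.

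The main obstacle is the first step: extracting a clean, globally uniform Lipschitz constant $L_h$ from Assumption \ref{ass:A2}. The radial/tangential Hessian computation for $f_h$ is mechanical but delicate because one must carefully handle the piecewise definition \eqref{G1} of $g$, verify smooth matching at the interface via the limiting conditions in Assumption \ref{ass:G0}, and check that the $\log\det\nabla h$ term does not introduce uncontrolled growth. Once this is in hand, the rest of the argument is essentially bookkeeping that chains Theorem \ref{LSILMC}, the Vempala--Wibisono one-step inequality, and Proposition \ref{Phithm}.
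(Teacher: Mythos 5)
Your proposal is correct and follows essentially the same route as the paper: derive global gradient-Lipschitzness of $f_h$ from \cref{ass:A2} (the paper packages this as \cref{GLcond}), invoke the LSI for $\pi_h$ from \cref{LSILMC}, apply the one-step KL recursion of Vempala--Wibisono to get $H_{\pi_h}(\rho_n)\le e^{-c\gamma n/\chlsi}H_{\pi_h}(\rho_0)+O(\chlsi L_h^2\gamma d)$, and then translate to $H_\pi(\nu_n)$ via \cref{Phithm}. The only discrepancy is in the numerical constants of the recursion (your $e^{-\gamma n/\chlsi}$ and bias $8\chlsi L_h^2\gamma d$ versus the paper's $e^{-2\gamma n/\chlsi}$ and $4\chlsi L_h^2\gamma d$), which is immaterial for the stated $\tilde\Theta$ iteration complexity.
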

\begin{remark}
As argued in~\cite{vempala2019rapid}, if we let $\rho_0$ to be a Gaussian distribution with mean being any stationary point of $f_h$ and covariance matrix being $(1/L_h)I_d$, then $H_{\pi_h}(\rho_0) = \tilde{O}(d)$. Furthermore, we also remark that similar convergence results in the stronger R\'enyi metric, for all $q\geq 4$ holds via~\cite[Theorem 4]{chewi2021analysis}.
\end{remark}

\begin{remark}
We leave a detailed study of obtaining convergence results for the underdamped Langevin dynamics and its discretization as future work.
\end{remark}

 \section{Relation with Poincar\'{e} Inequalities based on Non-local Dirichlet Forms}\label{sec:htfunctionalineq}

We now discuss the relationship between our assumptions on the potential function and functional inequalities like super and weak Poincar\'e inequalities that arise in characterizing the heavy-tailed stationary distributions of certain $\vartheta$-stable driven diffusions~\cite{rockner2001weak,rockner2003harnack,cattiaux2010functional, wang2014simple, wang2015functional,huang2021approximation}. Recall from~\cref{sec:prelim} that the Dirichlet form associated with Langevin diffusion in~\eqref{LDy} is of the form $\mc{E}(\phi)  = \int |\nabla \phi(x)|^2 \mu(x) dx $. However, in the case of $\vartheta$-stable driven diffusions the corresponding non-local Dirichlet form is given by
\begin{align}\label{eq:htDirichlet}
    \mc{E}(\phi)&:=\iint_{\{x\neq y\}} \frac{(\phi(x)-\phi(y))^2}{|x-y|^{(d+\vartheta)}} dx \mu(dy),
 \end{align}
for all functions in the Dirichlet domain  $\mc{D}(\mc{E})$; see for example~\cite{wang2014simple}. We now introduce similar functional inequalities that are associated with stable-driven diffusions. 
\begin{definition}[Poincar\'{e}-type Inequalities]
A Markov triple $(\mb{R}^d,\mu,\Gamma)$ (with $\mu$ a probability measure), with the Dirichlet form as in~\eqref{eq:htDirichlet} is said to satisfy 
    \begin{itemize}[leftmargin=0.2in]
        \item a \textit{Poincar\'{e} inequality} if for any function $\phi:\mb{R}^d\to \mb{R}$ in the Dirichlet domain $\mc{D}(\mc{E})$ and $C>0$,
    \begin{align*}
    \text{Var}_{\mu}(\phi)\le C \mc{E}(\phi),
    \end{align*}
    \item a \textit{weak Poincar\'{e} inequality} if for any function $\phi:\mb{R}^d\to \mb{R}$ in the Dirichlet domain $\mc{D}(\mc{E})$ and $r>0$,
    \begin{align*}
    \text{Var}_{\mu}(\phi)\le \alpha(r) \mc{E}(\phi)+r\lv \phi \rv_{\infty}^2,
    \end{align*}
    \item a \textit{super Poincar\'{e} inequality} if for any function $\phi:\mb{R}^d\to \mb{R}$ in the Dirichlet domain $\mc{D}(\mc{E})$ and $r>0$,
    \begin{align*}
    \mu(\phi^2)\le r \mc{E}(\phi)+\beta (r) \mu(|\phi|)^2,
    \end{align*}
    where $\mu(\varphi)=\int_{\mb{R}^d} \varphi(x) \mu(dx) $ for all $\varphi\in L^{1}(\mu)$.
    \end{itemize}
\end{definition}
In the following, we will discuss the relation between \cref{ass:A3}, \cref{ass:A1}, and \cref{ass:A5}, and the Poincar\'{e}-type inequalities above. In what follows, the terms $\alpha(r)$ and $\beta(r)$ are defined as
    \begin{align}\label{eq:alphar}
        \alpha(r)=\inf_{s>0} \left\{\frac{1}{\inf_{0<|x-y|\le s}[(e^{f(x)}+e^{f(y)})|x-y|^{-(d+\vartheta)}] }: \int\int_{|x-y|>s} e^{-f(x)}e^{-f(y)}dxdy\le r/2 \right\},
    \end{align}
    \begin{align}\label{eq:betar}
        \beta(r)=\inf_{t,s>0}\left\{ \frac{2\mu(\omega)}{\inf_{|x|\ge t}\omega(x)}+\beta_t(t\wedge s): \frac{2}{\inf_{|x|\ge t} \omega(x)}+s\le r \right\},
    \end{align}
    where for any $t>0$, we have
    \begin{align*}
        \beta_t(s)=\inf_{u>0}\left\{ \frac{ (\sup_{|z|\le 2t} e^{f(z)})^2 }{u^d (\inf_{|z|\le t} e^{f(z)})}: \frac{ u^{\vartheta} (\sup_{|z|\le 2t} e^{f(z)} )}{(\inf_{|z|\le t} e^{f(z)} ) }\le s \right\}.
    \end{align*}
    The function $\omega$ will depend on the properties of the potential $f$.

\begin{proposition}\label{Prop:A3toPoincare} If the original potential satisfies \cref{ass:A3} with parameters $\alpha,A,B$, then 
\begin{enumerate}
    \item [(1)] If $\alpha>\beta$ or $\alpha=\beta,\vartheta<A\beta^{-1}b^{-1}$, the original density function satisfies the super Poincar\'{e} inequality with 
    $$
    \omega(x)=\frac{C}{2^{d+\vartheta}}|x|^{A\alpha^{-1}b^{-\frac{\alpha}{\beta}}\log^{\frac{\alpha}{\beta}-1}(|x|)-\vartheta}\log^{-\frac{B}{\beta}}(|x|),
    $$ 
    for some positive constant C. 
    \item [(2)] If $\alpha=\beta, \vartheta\ge A\beta^{-1}b^{-1}$, the original density function satisfies the weak Poincar\'{e} inequality. 
\end{enumerate}
\end{proposition}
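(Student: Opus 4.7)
The plan is to convert assumption~\cref{ass:A3} into a quantitative lower bound on the growth of the original potential $f$, verify that the stated $\omega$ is admissible in the formula~\eqref{eq:betar} for $\beta(r)$ (respectively~\eqref{eq:alphar} for $\alpha(r)$), and then appeal to the characterization of super (respectively weak) Poincar\'{e} inequalities for non-local Dirichlet forms of the form~\eqref{eq:htDirichlet} developed in~\cite{wang2014simple,wang2015functional}.

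The first step is to substitute $y=\psi(|x|)=e^{b|x|^\beta}$ into~\cref{ass:A3}. Using $\psi'(r)\,r = b\beta r^\beta \psi(r)$ and $r^\beta = b^{-1}\log y$, the dissipativity inequality rearranges, for $\log y \geq b N_1^\beta$, to
\begin{align*}
y f'(y) \;>\; \frac{A}{\beta}\, b^{-\alpha/\beta}(\log y)^{\alpha/\beta - 1} \;+\; d \;-\; \frac{B+d-\beta}{\beta\log y}.
\end{align*}
Integrating in $y$ yields $f(y)\geq \tfrac{A}{\alpha}b^{-\alpha/\beta}(\log|y|)^{\alpha/\beta}+d\log|y|-\tfrac{B+d-\beta}{\beta}\log\log|y|+O(1)$, i.e.\ $e^{-f(y)}\lesssim |y|^{-d}\exp\!\bigl(-A\alpha^{-1}b^{-\alpha/\beta}(\log|y|)^{\alpha/\beta}\bigr)(\log|y|)^{(B+d-\beta)/\beta}$. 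Rewriting the proposed weight as $\omega(x)=C\,2^{-(d+\vartheta)}|x|^{-\vartheta}\exp\!\bigl(A\alpha^{-1}b^{-\alpha/\beta}(\log|x|)^{\alpha/\beta}\bigr)(\log|x|)^{-B/\beta}$ and multiplying produces $\omega(x)e^{-f(x)}\lesssim |x|^{-(d+\vartheta)}(\log|x|)^{(d-\beta)/\beta}$, which is integrable, so $\mu(\omega)<\infty$.

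For Part~(1), in both sub-cases -- $\alpha>\beta$ (where $(\log|x|)^{\alpha/\beta-1}\to\infty$) and $\alpha=\beta$ with $\vartheta<A/(\beta b)$ (where the net exponent $A/(\beta b)-\vartheta>0$) -- the weight $\omega(x)$ diverges with $|x|$, so $\inf_{|x|\geq t}\omega(x)\to\infty$ as $t\to\infty$. Given any $r>0$, one chooses $t=t(r)$ so that $2/\inf_{|x|\geq t}\omega(x)\leq r/2$ and then a sufficiently small $s\leq r/2$; the remaining $\beta_t(t\wedge s)$ in~\eqref{eq:betar} is finite because $\sup_{|z|\leq 2t}e^{f(z)}<\infty$ and $\inf_{|z|\leq t}e^{f(z)}>0$ by the continuity of $f$. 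Hence $\beta(r)<\infty$ for every $r>0$, and the super Poincar\'{e} inequality follows. For Part~(2), when $\alpha=\beta$ and $\vartheta\geq A/(\beta b)$ the exponent $A/(\beta b)-\vartheta\leq 0$ renders $\omega$ bounded and the divergence argument collapses; instead I verify~\eqref{eq:alphar} directly. The integrability of $e^{-f}$ coming from the growth bound forces $\iint_{|x-y|>s}e^{-f(x)-f(y)}\,dx\,dy\to 0$ as $s\to\infty$, so the constraint in~\eqref{eq:alphar} is met for $s=s(r)$ large, while the inner infimum $\inf_{0<|x-y|\leq s}[(e^{f(x)}+e^{f(y)})|x-y|^{-(d+\vartheta)}]$ is bounded below by a positive constant on the relevant compact region, yielding $\alpha(r)<\infty$ and the weak Poincar\'{e} inequality.

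The principal obstacle is the careful bookkeeping of the logarithmic corrections produced by integrating the dissipativity bound, together with the verification that $\beta_t(s)$ in~\eqref{eq:betar} remains finite uniformly over the admissible range of the auxiliary parameters $t,s$. The borderline regime $\alpha=\beta$ with $\vartheta$ close to $A/(\beta b)$ lies exactly at the transition between the super and weak Poincar\'{e} regimes and requires especially delicate asymptotic control.
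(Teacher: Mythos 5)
Your argument follows the paper's route exactly: substitute $\psi(r)=e^{br^\beta}$ into \cref{ass:A3} to get a pointwise lower bound on $f'$, integrate to obtain a lower bound on $e^{f}$, feed this into $(e^{f(x)}+e^{f(y)})/|x-y|^{d+\vartheta}$, and invoke \cite[Theorem~1.1]{wang2014simple} for the super (resp.\ weak) Poincar\'e inequality. The only differences are cosmetic refinements -- you retain the $(d-\beta)$ contribution in the log-correction rather than folding it into $B$, and you explicitly check $\mu(\omega)<\infty$, a hypothesis of the cited theorem that the paper leaves implicit -- so the two proofs are essentially identical.
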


\begin{proposition}\label{Prop:A5toPoincare} If the original potential satisfies \cref{ass:A5} with parameters $\mu,\theta$, then 
\begin{enumerate}
    \item [(1)] If $\theta<2-\beta$ or $\theta=2-\beta,\vartheta<\mu\beta^{-1} b^{-1}$, the original density function satisfies the super Poincar\'{e} inequality with $\omega(x)$ defined as
    \begin{equation*}
    \omega(x)=\left\{
        \begin{aligned}
        & \frac{C}{2^{d+\vartheta}}|x|^{(1-\theta)^{-1}(2-\theta)^{-1}\mu b^{-\frac{2-\theta}{\beta}}\log^{\frac{2-\theta}{\beta}-1}(|x|)+1-(d+\vartheta)}\log^{-\frac{d-\beta}{\beta}}(|x|),\quad \theta<2-\beta,\\
        &\frac{C}{2^{d+\vartheta}}|x|^{b^{-\frac{2-\theta}{\beta}}\log^{\frac{2-\theta}{\beta}-1}(|x|)-\vartheta}\log^{-\frac{d-\beta}{\beta}}(|x|), \quad \theta=2-\beta,\vartheta<\mu\beta^{-1} b^{-1}.
        \end{aligned}
        \right.
    \end{equation*}
    where $C$ is some positive constant.
    \item [(2)] If $\theta=2-\beta,\vartheta\ge \mu \beta^{-1} b^{-1}$ or $\theta>2-\beta$, the original density function satisfies the weak Poincar\'{e} inequality.
    \begin{align*}
        \alpha(r)=\inf \left\{\frac{1}{\inf_{0<|x-y|\le s}[(e^{f(x)}+e^{f(y)})|x-y|^{-(d+\vartheta)}] }: \int\int_{|x-y|>s} e^{-f(x)}e^{-f(y)}dxdy\le r/2 \right\}.
    \end{align*}
\end{enumerate}
\end{proposition}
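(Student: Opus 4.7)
The plan is to mirror the structure of the proof of Proposition~\ref{Prop:A3toPoincare}: I will convert the pointwise curvature condition of Assumption~\ref{ass:A5} into an explicit tail lower bound on $f$, and then feed this bound into the variational formulas \eqref{eq:alphar}--\eqref{eq:betar}. The dichotomy between the super-Poincar\'e and weak-Poincar\'e cases is driven entirely by whether the tail of $e^{-f}$ is thin enough to make $\beta(r)$ finite for every $r>0$.

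First I would perform the change of variables $s=\psi(|x|)=e^{b|x|^{\beta}}$, under which $\psi'(|x|)=b\beta|x|^{\beta-1}s$ and $|x|=b^{-1/\beta}(\log s)^{1/\beta}$. Isolating $f'(s)$ in the first inequality of Assumption~\ref{ass:A5} and using $(1+|x|^2/4)^{\theta/2}\le C|x|^\theta$ at infinity yields
\[
f'(s)\;\gtrsim\;\frac{\mu\, b^{(\theta-2)/\beta}}{\beta\, s\,(\log s)^{(\beta+\theta-2)/\beta}}+\frac{d}{s}.
\]
Integrating once in $s$, with the elementary primitive $\int (\log s)^{\gamma}/s\, ds=(\log s)^{\gamma+1}/(\gamma+1)$ for $\gamma\neq -1$, gives for large $s$
\[
f(s)\;\gtrsim\;\frac{\mu\, b^{-(2-\theta)/\beta}}{2-\theta}\,(\log s)^{(2-\theta)/\beta}+d\log s,
\]
so $e^{-f(s)}\lesssim s^{-d}\exp\!\bigl(-c\,(\log s)^{(2-\theta)/\beta}\bigr)$ with $c=\mu b^{-(2-\theta)/\beta}/(2-\theta)$.

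For Part (1) I would substitute this tail bound into the definition of $\beta_t(\cdot)$ in \eqref{eq:betar}: on the ball $|z|\le 2t$ the ratio $\sup e^{f}/\inf e^{f}$ is controlled by the monotone lower bound above, so saturating the constraint $u^{\vartheta}\sup e^{f}/\inf e^{f}\le s$ reduces $\beta_t(s)$ to an explicit function of $t$ alone. Optimising over $t$ and $s$ as functions of $r$ then forces the announced ansatz for $\omega$; the polynomial exponent $\tfrac{\mu\, b^{-(2-\theta)/\beta}}{(1-\theta)(2-\theta)}(\log|x|)^{(2-\theta)/\beta-1}+1-(d+\vartheta)$ and the corrective factor $(\log|x|)^{-(d-\beta)/\beta}$ both drop out of this optimisation together with the $\mu(\omega)<\infty$ condition. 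The boundary case $\theta=2-\beta$ requires $\vartheta<\mu\beta^{-1}b^{-1}$ so that the Dirichlet prefactor $|x|^{-\vartheta}$ is still beaten by the (now purely polynomial) growth of $\omega$. For Part (2), when $\theta>2-\beta$ or $\theta=2-\beta$ with $\vartheta$ above the critical threshold, the tail of $e^{-f}$ is too fat for any such $\omega$ to work, so I would instead estimate the two factors in \eqref{eq:alphar} separately: control the double integral $\iint_{|x-y|>s}e^{-f(x)}e^{-f(y)}\,dx\,dy$ via the tail estimate, invert to express $s=s(r)$, and combine with a lower bound on $\inf_{0<|x-y|\le s}[(e^{f(x)}+e^{f(y)})|x-y|^{-(d+\vartheta)}]$ to conclude that $\alpha(r)<\infty$ for every $r>0$.

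The main obstacle will be the sharp bookkeeping of the logarithmic and polynomial prefactors in Part (1): integrating Assumption~\ref{ass:A5} once is straightforward, but preserving the coefficient $1/((1-\theta)(2-\theta))$ and the exact power $(2-\theta)/\beta-1$ of $\log|x|$ through the optimisation defining $\beta(r)$ requires juggling two layers of corrections---one from the non-linear change of variable $s=\psi(|x|)$ and one from a second integration that enters through $\mu(\omega)<\infty$. Once Part (1) is set up in this way, the weak-Poincar\'e half of Part (2) follows by an analogous but considerably simpler computation.
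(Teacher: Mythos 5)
Your proposal reproduces the first half of the paper's argument faithfully (rewriting the pointwise curvature bound via $s=\psi(|x|)$, integrating once, and producing a tail lower bound on $e^{f}$), but it contains a genuine gap that blocks Part~(1) in the regime $\theta<2-\beta$. You isolate and integrate \emph{only the first inequality} of Assumption~\ref{ass:A5}, the one bounding $f'$. A single integration of that inequality gives, in the paper's notation, the bound \eqref{A51WPILowBound},
\[
e^{f(|x|)}\gtrsim |x|^{(2-\theta)^{-1}\mu\,b^{-\frac{2-\theta}{\beta}}\log^{\frac{2-\theta}{\beta}-1}(|x|)+d}\,\log^{-\frac{d-\beta}{\beta}}(|x|),
\]
whose exponent carries the coefficient $(2-\theta)^{-1}\mu$ and the additive constant $+d$. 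The $\omega$ demanded by the proposition when $\theta<2-\beta$ has leading coefficient $(1-\theta)^{-1}(2-\theta)^{-1}\mu$ and additive constant $+1$; that extra $(1-\theta)^{-1}$ factor is produced by the \emph{second} inequality of Assumption~\ref{ass:A5} (the one involving $f''$), which the paper integrates twice after recognising a total derivative (exactly as in the proof of \cref{Prop:A1toPoincare}). No amount of optimising over $t$ and $s$ inside \eqref{eq:betar} can regenerate $(1-\theta)^{-1}$ from the first-order condition alone, since the variational formula only rebalances the already-fixed growth of $e^{f}$. So your claim that the stated $\omega$ ``drops out of this optimisation'' is not justified: the optimisation would produce a different (though still divergent) $\omega$, which does not match the statement being proved.

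A secondary, stylistic divergence: the paper does not directly optimise $\beta_t(\cdot)$. After obtaining the two tail lower bounds it simply checks the ready-made sufficient criterion of \cite[Theorem~1.1-(3)]{wang2014simple}, which packages that optimisation. Your direct approach via \eqref{eq:alphar}--\eqref{eq:betar} is legitimate in principle, but duplicates work. For Part~(2) your plan (estimate the double integral, invert, and use the infimum factor) is essentially what the cited theorem already encodes, so that half is fine once the correct tail bound is in hand. To repair the gap: integrate the second inequality of Assumption~\ref{ass:A5} (multiply by $\log^{1-1/\beta}(|x|)\,|x|$ to make the left side a total derivative, integrate once to bound $f'$, integrate again to bound $f$), obtain the bound \eqref{A52WPILowBound} with the $(1-\theta)^{-1}(2-\theta)^{-1}$ coefficient, and observe that for $\theta<2-\beta$ this dominates the bound from the first inequality, while for $\theta=2-\beta$ the first inequality's bound is the stronger one.
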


\begin{proposition}\label{Prop:A1toPoincare} If the original potential function satisfies \cref{ass:A1} with parameter $\rho$, then 
\begin{enumerate}
    \item [(1)] If $\beta\in (1,2)$ or $\beta=2,\vartheta<\frac{1}{2}\rho b^{-1}$, the original density function satisfies the super Poincar\'{e} inequality with 
    $$
    \omega(x)=\frac{C}{2^{d+\vartheta}}|x|^{\frac{1}{2}\rho b^{-\frac{2}{\beta}}\log^{\frac{2}{\beta}-1}(|x|)-\vartheta}\log^{-\frac{d-\beta}{\beta}}(|x|),
    $$ 
    for some positive constant $C$. 
    \item [(2)] If $\beta=2, \vartheta=\frac{1}{2}\rho b^{-1}, d=1,2$, the original density function satisfies the Poincar\'{e} inequality. 
    \item [(3)] If $\beta=2,\vartheta=\frac{1}{2}\rho b^{-1},d\ge 3$ or $\beta=2,\vartheta>\frac{1}{2}\rho b^{-1}$, the original density function satisfies the weak Poincar\'{e} inequality. 
\end{enumerate}
\end{proposition}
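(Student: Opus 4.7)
The plan is to follow the same template as the proofs of Propositions~\ref{Prop:A3toPoincare} and~\ref{Prop:A5toPoincare}, which in turn rely on the criteria for super and weak Poincar\'e inequalities associated with non-local Dirichlet forms developed in~\cite{wang2014simple, wang2015functional, rockner2003harnack, rockner2001weak}. The overall strategy has three stages: (i) extract asymptotic tail information on $f$ from Assumption~\ref{ass:A1}, (ii) plug that information into the explicit functionals $\omega$, $\alpha(r)$, and $\beta(r)$ from~\eqref{eq:alphar}--\eqref{eq:betar}, and (iii) separate cases according to the relative sizes of $\beta$, $\vartheta$, and $d$.

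I would first translate the strong-convexity-at-infinity condition of~\ref{ass:A1} into the asymptotic profile of $f$. Performing the change of variable $u=\psi(r)=e^{br^\beta}$, so that $r=(b^{-1}\log u)^{1/\beta}$ and $\psi'(r)=b\beta r^{\beta-1}u$, the first inequality in~\ref{ass:A1} becomes a lower bound of the form $f'(u)\gtrsim \rho\,(b^{-1}\log u)^{(2-\beta)/\beta}/(b\beta u)$ for all sufficiently large $u$. Integrating once and tracking the leading term yields $f(x)\geq \tfrac{\rho}{2}b^{-2/\beta}(\log|x|)^{2/\beta}+o\bigl((\log|x|)^{2/\beta}\bigr)$ as $|x|\to\infty$; the second inequality in~\ref{ass:A1} supplies the matching two-sided control needed later. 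When $\beta\in(1,2)$ the resulting decay $e^{-f(x)}\lesssim \exp\bigl(-c(\log|x|)^{2/\beta}\bigr)$ is faster than every polynomial, while when $\beta=2$ it is precisely of order $|x|^{-\rho/(2b)}$.

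For part (1) I would then apply the sufficient condition for the super Poincar\'e inequality of~\cite{wang2014simple, wang2015functional}: it suffices to find a positive function $\omega$ with $\inf_{|x|\geq t}\omega(x)\to\infty$, $\mu(\omega)<\infty$, and $\omega(x)\leq \inf_{0<|x-y|\leq s}\bigl[(e^{f(x)}+e^{f(y)})|x-y|^{-(d+\vartheta)}\bigr]$ for $s$ chosen appropriately in terms of the rate $r$. The candidate $\omega$ in the statement is precisely the one obtained by matching $e^{f(x)}$ against the local singularity of the $\vartheta$-stable kernel and keeping the leading log-correction from Step~(i). The pointwise bound is a direct asymptotic computation, and the growth/integrability conditions then split cleanly into two sub-cases: (a) $\beta\in(1,2)$, where the stretched-log tails in $e^{-f}$ dominate and every $\vartheta>0$ is admissible, and (b) $\beta=2$ with $\vartheta<\tfrac{\rho}{2b}$, in which case after the $e^{-f}$-cancellation the remaining integrand decays like $|x|^{-d-\vartheta+\rho/(2b)}$ up to log factors, and the strict inequality on $\vartheta$ is exactly what ensures convergence. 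Substituting $\omega$ into~\eqref{eq:betar} delivers the announced rate.

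For part (3), the same $\omega$ fails to be $\mu$-integrable, so I would instead invoke the weak Poincar\'e criterion of~\cite{rockner2001weak}, which amounts to showing $\alpha(r)<\infty$ for every $r>0$ using~\eqref{eq:alphar}. Inserting $e^{f(x)}\sim|x|^{\rho/(2b)}$ and optimizing over $s$ by balancing $(e^{f(x)}+e^{f(y)})|x-y|^{-(d+\vartheta)}$ against the double-integral constraint gives a finite $\alpha(r)$. The genuinely delicate case is the borderline $\beta=2$, $\vartheta=\tfrac{\rho}{2b}$, $d\in\{1,2\}$ of part (2), which I expect to be the main obstacle: at this exact scaling the density, the jump kernel, and the ambient dimension are in critical balance, and one has to exploit the fact that $\int_0^1 r^{d-1-\vartheta+\rho/(2b)}dr$ is finite for $d\leq 2$ but divergent for $d\geq 3$ in order to upgrade weak Poincar\'e to the full Poincar\'e inequality. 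This low-dimensional recovery parallels the classical behavior of the fractional Laplacian and requires a careful truncation argument with explicit control of the integrals against $|x-y|^{-(d+\vartheta)}$ near both $0$ and $\infty$.
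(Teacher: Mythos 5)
Your overall strategy — pass from \cref{ass:A1} to asymptotic growth of $f$, drop this into the non-local kernel $\frac{e^{f(x)}+e^{f(y)}}{|x-y|^{d+\vartheta}}$, and then sort cases by the sign of the resulting exponent against \cite[Theorem~1.1]{wang2014simple} — is the same as the paper's, and parts (1) and (3) would go through as you describe. However, your asymptotic expansion of $f$ is incomplete in a way that actually matters. You integrate only the leading $\rho$-term in the reformulated first inequality of \cref{ass:A1}, obtaining $f(x)\ge \tfrac{\rho}{2}b^{-2/\beta}(\log|x|)^{2/\beta}+o((\log|x|)^{2/\beta})$, and later in part (3) write $e^{f(x)}\sim|x|^{\rho/(2b)}$. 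The paper keeps the subleading terms coming from the $-b\beta d|x|^{\beta-2}$ and $(d-\beta)|x|^{-2}$ pieces of \cref{ass:A1}: after inverting the change of variables and integrating, these produce $f(|x|)\ge C_{N,d}+\tfrac{1}{2}\rho b^{-2/\beta}\log^{2/\beta}|x|+d\log|x|-\tfrac{d-\beta}{\beta}\log\log|x|$, i.e.\ $e^{f(|x|)}\gtrsim |x|^{\frac{1}{2}\rho b^{-2/\beta}\log^{2/\beta-1}|x|+d}\log^{-\frac{d-\beta}{\beta}}|x|$. The extra $+d$ in the power exactly cancels the $d$ in the kernel's $|x-y|^{-(d+\vartheta)}$, and — critically — the remaining $\log^{-\frac{d-\beta}{\beta}}|x|$ factor is what carries the dimensional dichotomy in part (2).

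Concretely, with $\beta=2$ and $\vartheta=\tfrac{1}{2}\rho b^{-1}$ the power of $|x|$ cancels exactly, and the kernel lower bound reduces to a constant times $\log^{-\frac{d-2}{2}}|x|$. For $d=1,2$ this is $\ge 1$, so the kernel is bounded below by a positive constant and \cite[Theorem~1.1-(1)]{wang2014simple} gives the Poincar\'e inequality directly — no truncation argument is needed. For $d\ge 3$ the log factor decays to zero, which is why only the weak Poincar\'e inequality follows. The integral criterion you propose as the deciding quantity, $\int_0^1 r^{d-1-\vartheta+\rho/(2b)}\,dr$, does not capture this: at the critical $\vartheta=\rho/(2b)$ the integrand is $r^{d-1}$, which converges for \emph{every} $d\ge 1$, so it cannot distinguish $d\le 2$ from $d\ge 3$. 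The correct dichotomy lives entirely in the $\log\log$ correction that your expansion drops.
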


\begin{remark}\label{Remark:A1A3toPoincare}
For the example of multivariate $t$-distribution, it is shown later in \cref{lem:tdistribution}, that it satisfies \cref{ass:A1} with $\beta=2$, $\alpha=2$, $A=2b \kappa $, $B\ge 0$ and arbitrary $\mu\in (0,2b\kappa)$. Therefore when $\kappa>\vartheta$, it falls into the class of densities described by the super Poincar\'e inequality. When $0<\kappa\le \vartheta$, it falls into the class of densities described by the weak Poincar\'e inequality. This classification of the multivariate $t$-distributions with different degrees of freedom coincides with \cite[Corollary 1.2]{wang2015functional}. 
\end{remark}
\begin{remark}\label{Remark:A5toPoincare}
For the multivariate $t$-distribution with degrees of freedom $\kappa$, we show later in \cref{lem:tdistribution} that it satisfies \cref{ass:A5} with arbitrary $\mu\in(0, b\kappa\beta(\beta-1) )$ and $\theta=2-\beta$. When $\vartheta<\kappa(\beta-1)$, we can show that multivariate $t$-distribution with $\kappa$ degrees of freedom satisfies the super Poincar\'e inequality which agrees with the results in \cite{wang2014simple} and our \cref{Remark:A1A3toPoincare} above.
\end{remark}

\section{Illustrative Examples}\label{sec:Examples}
In this section, we introduce a specific transformation map $h$ defined by \eqref{G1} and \eqref{Hmap} with $\beta=2$ and $g_{in}$ defined by the following equation. For all $r\le b^{-\frac{1}{2}}$,
\begin{align}\label{Ginbeta2}
    g_{in}(r)=r b^{\frac{1}{2}}\exp\left(br^2-\frac{10}{3}b^{\frac{3}{2}}r^3+\frac{15}{4}b^2r^4-\frac{6}{5}b^{\frac{5}{2}}r^5+\frac{47}{60}\right).
\end{align}
Using the above transformation map, we analyze the oracle complexity of TULA for sampling from the multivariate $t$-distribution and related densities. 

\subsection{Example 1} \label{sec:texample}
The density and potential function of the multivariate $t$-distribution are respectively given by
\begin{align}\label{eq:tdistribution}
    \pi(x)\propto (1+|x|^2)^{-\frac{d+\kappa}{2}}, \quad f(x)=\frac{d+\kappa}{2}\log (1+|x|^2),
\end{align}
where $\kappa$ is the degrees of freedom parameter. We first show that the above $g_{in}$ satisfies~\cref{ass:G0} and hence the corresponding $h$ is a diffeomorphism. 
\begin{lemma}\label{GinG0} With $g_{in}$ defined in \eqref{Ginbeta2}, $\beta=2$ and $f(x)=\frac{d+\kappa}{2}\log (1+|x|^2)$, $g$ defined in \eqref{G1} satisfies \cref{ass:G0}.
\end{lemma}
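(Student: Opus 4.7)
The plan is to verify each condition of \cref{ass:G0} by direct computation on the explicit polynomial exponent. Writing $g_{in}(r) = rb^{1/2}e^{\phi(r)}$ with
$$
\phi(r) = br^2 - \tfrac{10}{3}b^{3/2}r^3 + \tfrac{15}{4}b^2r^4 - \tfrac{6}{5}b^{5/2}r^5 + \tfrac{47}{60},
$$
the function $g_{in}$ is visibly $\mc{C}^\infty$ on $(0,\infty)$, and $g_{in}(0)=0$. The idea is to use $u(r) := \log g_{in}(r) = \log r + \tfrac12 \log b + \phi(r)$, for which $u'(r) = 1/r + \phi'(r)$, $u''(r) = -1/r^2 + \phi''(r)$, $u'''(r) = 2/r^3 + \phi'''(r)$, and then express the derivatives of $g_{in}$ via Fa\`a di Bruno: $g_{in}' = u'\, g_{in}$, $g_{in}'' = (u''+(u')^2)g_{in}$, $g_{in}''' = (u'''+3u'u''+(u')^3)g_{in}$.

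The next step is to verify the boundary identities at $r=b^{-1/2}$. Plugging in $t:=b^{1/2}r=1$, a direct arithmetic check gives $\phi(b^{-1/2}) = 1-\tfrac{10}{3}+\tfrac{15}{4}-\tfrac{6}{5}+\tfrac{47}{60}=1$, so $g_{in}(b^{-1/2}) = e$. Similarly $\phi'(b^{-1/2}) = b^{1/2}$, $\phi''(b^{-1/2})=3b$, $\phi'''(b^{-1/2})=-2b^{3/2}$, which yields $u'=2b^{1/2}$, $u''=2b$, $u'''=0$ at $r=b^{-1/2}$. Feeding these into the Fa\`a di Bruno formulas gives $g_{in}'(b^{-1/2})=2b^{1/2}e$, $g_{in}''(b^{-1/2}) = (2b+4b)e = 6be$, and $g_{in}'''(b^{-1/2}) = (0 + 12b^{3/2}+8b^{3/2})e = 20b^{3/2}e$, matching $\beta b^{1/\beta}e$, $(2\beta^2-\beta)b^{2/\beta}e$, $(5\beta^3-6\beta^2+2\beta)b^{3/\beta}e$ at $\beta=2$.

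For the $r\to 0_+$ limits, the key observation is that $\phi$ is smooth at $0$ with $\phi(0)=47/60$ and $\phi'(r)=2br+O(r^2)$. Thus $\log(g_{in}(r)/r) = \tfrac12\log b + \phi(r)$ is smooth, so its first derivative divided by $r$ tends to $2b$ and its second derivative tends to $\phi''(0)=2b$. For $\log g_{in}'(r) = \tfrac12\log b + \phi(r) + \log(1+r\phi'(r))$, since $1+r\phi'(r)=1+O(r^2)$ is smooth and strictly positive near $0$, its first derivative divided by $r$ and its second derivative remain bounded (the limits evaluate to $6b$ and $0$ respectively). Finally, for the multivariate $t$ potential, $f'(x) = (d+\kappa)x/(1+x^2) \sim (d+\kappa)x$ as $x\to 0$; using $g_{in}(r) \sim b^{1/2}e^{47/60} r$ and $g_{in}'(r) \to b^{1/2}e^{47/60}$, the ratio $f'(g_{in}(r))g_{in}'(r)/r$ tends to $(d+\kappa)be^{47/30}$, which is finite.

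The main obstacle is the monotonicity claim, namely $g_{in}'>0$ on $(0,b^{-1/2})$, which, since the exponential factor is positive, reduces to showing
$$
1 + r\phi'(r) = 1 + 2br^2 - 10b^{3/2}r^3 + 15b^2r^4 - 6b^{5/2}r^5 > 0 \quad \text{for } r\in(0,b^{-1/2}).
$$
Substituting $t=b^{1/2}r\in(0,1)$, this becomes $q(t):=1+2t^2-10t^3+15t^4-6t^5>0$ on $[0,1]$. I would analyze $q$ by computing $q'(t)=2t(2-15t+30t^2-15t^3)$ and locating its critical points via the factorization $q''(t)/2 = 2-30t+90t^2-60t^3$; the endpoints satisfy $q(0)=1$, $q(1)=2$, and a finite check of $q$ at the interior critical points (both giving values safely bounded below by a positive constant, e.g., $q(1/2)=1$) establishes the strict positivity. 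Combined with $g_{in}(0)=0$ and $g_{in}(b^{-1/2})=e$, this yields both monotonicity and onto-ness, completing the verification of \cref{ass:G0}.
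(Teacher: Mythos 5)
Your proposal follows essentially the same route as the paper's: both verify \cref{ass:G0} by direct computation on the explicit form of $g_{in}$, checking $g_{in}(0)=0$, the boundary values $g_{in}^{(k)}(b^{-1/2})$, and the finiteness of the limits as $r\to 0_+$ via the smoothness of the polynomial exponent $\phi(r)$. You are in fact somewhat more thorough than the paper: you make the strict monotonicity (positivity of $q(t)=1+2t^2-10t^3+15t^4-6t^5$ on $[0,1]$) explicit, and you derive $g_{in}''(b^{-1/2})=6be$ and $g_{in}'''(b^{-1/2})=20b^{3/2}e$ via the Fa\`a di Bruno identities rather than omitting the calculation; the only blemish is a small arithmetic slip in the parenthetical "the limits evaluate to $6b$ and $0$"---the second limit, $\lim_{r\to 0_+}\frac{d^2}{dr^2}\log g_{in}'(r)$, is $6b$, not $0$, which does not affect the finiteness being claimed.
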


Next, we show that the potential function of the multivariate $t$-distribution satisfies the assumptions we introduced in~\cref{sec:THM}.
\begin{lemma}\label{lem:tdistribution}
We have for the following for the potential function $f(x)$ in~\eqref{eq:tdistribution}:
\begin{itemize}
    \item [(1)] $f(x)$ is isotropic and $f\in \mc{C}^2(\mb{R}^d)$;
    \item [(2)] $f$ satisfies \cref{ass:A2} with some $N_4>0$ and $L=2\kappa b^{\frac{2}{\beta}}\beta$;
    \item [(3)] $f$ satisfies \cref{ass:A3} with $\alpha=\beta$, $A=\kappa b \beta$ and some $B\ge 0$,$N_1>0$. 
    \item [(4)] $f$ satisfies \cref{ass:A5} with arbitrary $\mu\in(0,\kappa b\beta(\beta-1))$, $\theta=2-\beta$ and some $N_2>0$.
\end{itemize}
\end{lemma}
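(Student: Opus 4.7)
\textbf{Proof plan for \cref{lem:tdistribution}.}

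Part (1) is immediate: $f(x)=\tfrac{d+\kappa}{2}\log(1+|x|^2)$ is a function of $|x|$ alone, and since $1+|x|^2>0$ everywhere, $f$ is in fact $\mathcal{C}^\infty(\mathbb{R}^d)$, so certainly in $\mathcal{C}^2(\mathbb{R}^d)$. For parts (2)--(4), the strategy is uniform: compute $f'$ and $f''$ of the radial profile explicitly, substitute $u=\psi(|x|)=e^{b|x|^2}$ (this is the regime in which each assumption is phrased, since $\beta=2$ and $|x|>N_i\geq b^{-1/\beta}$), and read off the leading asymptotic as $|x|\to\infty$. The structure of the assumptions then reduces everything to checking a single inequality between constants.

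The key computations, done once and reused, are
\[
 f'(u)=(d+\kappa)\frac{u}{1+u^2},\qquad f''(u)=(d+\kappa)\frac{1-u^2}{(1+u^2)^2},\qquad \psi'(r)=2br\psi(r),\qquad \psi''(r)=(2b+4b^2r^2)\psi(r).
\]
Substituting $u=\psi(|x|)$ and using the elementary identity $\tfrac{\psi^2}{1+\psi^2}=1-\tfrac{1}{1+\psi^2}$, together with $\tfrac{1}{1+\psi^2}=O(e^{-2b|x|^2})$, gives the asymptotics
\[
 f'(\psi(|x|))\psi'(|x|)=2b(d+\kappa)|x|+O(|x|\,e^{-2b|x|^2}),
\]
\[
 f'(\psi(|x|))\psi''(|x|)=(d+\kappa)(2b+4b^2|x|^2)+O(e^{-2b|x|^2}),\qquad f''(\psi(|x|))\psi'(|x|)^2=-4b^2(d+\kappa)|x|^2+O(|x|^2 e^{-2b|x|^2}).
\]
In particular the two second-order terms conspire so that $f''(\psi)\psi'^2+f'(\psi)\psi''\to 2b(d+\kappa)$.

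With these at hand, the three parts are one-line checks against the prescribed constants (using $\beta=2$ throughout): for (2), both upper-bound expressions tend to $2b(d+\kappa)-2bd=2b\kappa$, which is strictly less than $L=4b\kappa=2\kappa b^{2/\beta}\beta$, so for all $|x|>N_4$ large enough the two inequalities in \cref{ass:A2} hold. For (3) the leading $2b(d+\kappa)|x|^2$ contribution from $f'(\psi)\psi'\,|x|$ cancels the $b\beta d|x|^\beta=2bd|x|^2$ term, leaving
\[
 f'(\psi(|x|))\psi'(|x|)|x|-b\beta d|x|^\beta+(d-\beta)=2b\kappa|x|^2+(d-2)+O(|x|^2 e^{-2b|x|^2}),
\]
which matches the desired $A|x|^\alpha-B$ with $\alpha=\beta=2$, $A=\kappa b\beta=2b\kappa$, and any $B\geq \max\{0,2-d\}$ works for sufficiently large $N_1$. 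For (4), since $\theta=2-\beta=0$, the right-hand side $\mu/(1+\tfrac14|x|^2)^{\theta/2}$ reduces to the constant $\mu$, and both quantities on the left tend to $2b\kappa$; any $\mu\in(0,2b\kappa)=(0,\kappa b\beta(\beta-1))$ therefore lies strictly below the limit, and we pick $N_2$ large enough to ensure the inequalities hold for $|x|>N_2$.

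The bookkeeping is entirely routine; the only potential pitfall is keeping the $O(e^{-2b|x|^2})$ remainders under control, but since $\psi(|x|)^2$ grows doubly-exponentially in $|x|$, all such remainders vanish far faster than any competing polynomial correction, so taking $N_1,N_2,N_4$ large enough is trivially sufficient. No deeper structural argument is needed beyond the explicit derivative formulas and the single cancellation identity above.
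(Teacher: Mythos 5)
Your computation is correct and mirrors the paper's approach at the algebraic level: compute $f'$ and $f''$ for the radial profile, substitute $\psi$, and compare leading terms. However, you have silently restricted to $\beta=2$, whereas the lemma's constants are written in terms of $\beta$ (e.g.\ $L = 2\kappa b^{2/\beta}\beta$, $\theta = 2-\beta$, $\mu \in (0,\kappa b\beta(\beta-1))$) and the paper's proof keeps $\beta \in (1,2]$ free throughout, using expansions of the form $\kappa b\beta |x|^{\beta-2} + (d-\beta)|x|^{-2} + o(|x|^{-k})$. This generality is actually used later in \cref{Remark:A5toPoincare}, where the condition $\vartheta < \kappa(\beta-1)$ is stated as a function of $\beta$, so the $\beta < 2$ case is not vacuous.

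The restriction is not merely cosmetic. When $\beta < 2$, the dominant terms in parts (2) and (4) are the decaying power $\kappa b\beta|x|^{\beta-2}$ rather than a constant, and crucially for part (4) the right-hand side of \cref{ass:A5}, namely $\mu/(1+\tfrac14|x|^2)^{\theta/2}$ with $\theta = 2-\beta > 0$, also decays like $|x|^{\beta-2}$ for large $|x|$. The inequality then becomes a comparison of the coefficients of $|x|^{\beta-2}$ on both sides rather than a constant-against-constant comparison, which your argument does not address (at $\beta=2$ the issue vanishes because $\theta=0$ collapses the right-hand side to the constant $\mu$). The fix is short—substitute $|x|^{\beta-2}$ asymptotics on both sides, note that $e^{-2b|x|^\beta}$-remainders are negligible against any power, and pick $N_i \geq b^{-1/\beta}$—but it is a genuine missing step relative to the lemma as stated. (A very minor stylistic note: $\psi(|x|)^2 = e^{2b|x|^2}$ is super-polynomially, not doubly-exponentially, growing; the conclusion about remainders is still correct.)
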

Hence, we can apply \cref{LSITULA} with $n= \Tilde{O}(L^2_h \chlsi^2 d/\epsilon)$, where $\chlsi$ and $L_h$ are two constants that depend on $f$ as introduced in \eqref{LSIConst} and \eqref{GLpara}. However, the dependence of $\chlsi$ and $L_h$ on $f$ would affect the order of $n$ significantly, especially in terms of the dimension parameter.
Specifically, after explicitly calculating the constants $\chlsi$ and $L_h$, the mixing time of TULA in KL-divergence with error tolerance $\epsilon$ is of order $n=\Tilde{O}(\exp(2d) d^{d+1}\epsilon^{-1})$. A detailed proof of \cref{lem:tdistribution} and the calculation for order estimation of the mixing time $n$ are given in Sections \ref{sec:secegproofs} and~\ref{sec:orderest} respectively.

Despite the above result for the multivariate $t$-distribution, we next demonstrate through several examples that as long as the tail becomes slightly lighter, we get linear dependency on both the dimension parameter and inverse of the target accuracy parameter. In the next several examples, we use the following result form~\cite{chen1997estimates} to calculate the LSI constant. Furthermore, following a similar argument in the proof of~\cref{lem:tdistribution}, one can show that the potentials satisfy the assumptions required by~\cref{LSITULA}. However, for simplicity, we directly calculate the LSI constants of the transformed potential and use the result from~\cite{vempala2019rapid}. 

\begin{corollary}\label{Cor:estimate of clsi}[Simplified version of \cite[Corollary 1.4]{chen1997estimates}] For the Langevin diffusion process with generator $\mc{L}=-\nabla f \cdot \nabla+\Laplace$,  let $\lambda_f(x)$ be the largest eigenvalue of the matrix $\nabla^2 f(x)$ and let $\bar{\beta}(r)=\inf_{|x|\ge r}\{-\lambda_f(x)\}$. If $\sup_{r\ge 0} \bar{\beta}(r)>0$, then the stationary measure to this Langevin diffusion satisfies LSI with constant $2/\alpha(\mc{L})$ such that
\begin{align*}
    \alpha(\mc{L})\ge \frac{2}{a_0^2}\exp\left(1-\int_0^{a_0} r\bar{\beta}(r)dr\right)>0,
\end{align*}
where $a_0>0$ is the unique solution to the equation $\int_0^a \bar{\beta}(r)dr=2/a$.
\end{corollary}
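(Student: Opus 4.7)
Since the corollary is billed as a simplified restatement of \cite[Corollary 1.4]{chen1997estimates}, my plan is to derive it by matching the present hypotheses to the hypotheses of that reference and then extracting the explicit LSI constant. First I would check that the curvature lower bound function $\bar{\beta}(r) = \inf_{|x|\ge r}\{-\lambda_f(x)\}$ used here plays exactly the role of the radial curvature-at-infinity function in Chen--Wang, where the Hessian of $f$ is controlled from below outside balls. The condition $\sup_{r\ge 0}\bar{\beta}(r) > 0$ ensures we are in the regime where $f$ is not convex everywhere but becomes so in some asymptotic sense, which is precisely the setting covered by Chen--Wang.

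The next step is to import the analytic mechanism: for the over-damped Langevin semigroup with generator $\mc{L} = -\nabla f\cdot \nabla + \Laplace$, one constructs a reflection (or synchronous) coupling of two copies of the diffusion and studies a radial distance functional $\varphi(|X_t - Y_t|)$ with $\varphi$ concave and adapted to $\bar{\beta}$. The standard It\^o computation yields a contraction rate for $\mb{E}\varphi(|X_t - Y_t|)$ in terms of $\int_0^a r\bar{\beta}(r)\,dr$ for the chosen scale $a$, and this contraction rate lower-bounds the log-Sobolev constant $\alpha(\mc{L})$ through the Otto--Villani-type transport argument (or directly through Bakry--\'Emery with a perturbation term to account for the region where $\lambda_f$ is negative). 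This is the content of \cite{chen1997estimates} and does not need to be redone here.

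The remaining step, and the one I expect to require the most care, is the optimization that extracts the explicit exponential bound. One needs to show that the equation $\int_0^a \bar{\beta}(r)\,dr = 2/a$ has a unique positive solution $a_0$: the left-hand side is nondecreasing in $a$ (by monotonicity of $\bar\beta$ and nonnegativity of $\bar\beta(r)$ for large $r$ under $\sup\bar\beta > 0$) while the right-hand side $2/a$ is strictly decreasing, so a standard intermediate-value and monotonicity argument gives existence and uniqueness. Substituting this optimal $a_0$ into the general Chen--Wang bound then produces the displayed expression $\alpha(\mc{L}) \ge (2/a_0^2)\exp(1 - \int_0^{a_0} r\bar{\beta}(r)\,dr)$, and positivity of this quantity is immediate from $a_0 < \infty$ and the finiteness of the integral. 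With that substitution in place the corollary follows directly.
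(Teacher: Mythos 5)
The paper provides no independent proof of this corollary; it is stated as a direct specialization of \cite[Corollary 1.4]{chen1997estimates} to the reversible, unit-diffusion case (take $a(x)=I_d$, $\sigma=I_d$, $\delta=1$, $p=0$ in Chen--Wang's statement). Your plan to map hypotheses and invoke the cited result is therefore the intended route, and the lengthy recapitulation of the coupling mechanism behind Chen--Wang is unnecessary for this corollary, as you yourself note.

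Two small points deserve care. First, your sketch of existence and uniqueness of $a_0$ uses a monotonicity claim that is not quite right: $\int_0^a \bar\beta(r)\,dr$ is \emph{not} nondecreasing in $a$, because $\bar\beta$ can be negative for small $r$ (it is $\bar\beta$ itself that is nondecreasing, being an infimum over a shrinking set). Existence follows because $a\mapsto a\int_0^a\bar\beta(r)\,dr$ is continuous, $\le 0$ near $a=0$, and $\to+\infty$ as $a\to\infty$ (using $\sup_r\bar\beta(r)>0$); uniqueness follows because at any crossing $a_*$ its derivative is $\int_0^{a_*}\bar\beta + a_*\bar\beta(a_*) = 2/a_* + a_*\bar\beta(a_*) > 0$, since the monotonicity of $\bar\beta$ together with $\int_0^{a_*}\bar\beta>0$ forces $\bar\beta(a_*)\ge 0$. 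In any case this uniqueness is part of Chen--Wang's own statement and does not need to be reproved. Second, when matching to Chen--Wang you should be explicit about the sign and extremal-eigenvalue bookkeeping: as written, the corollary defines $\lambda_f$ as the \emph{largest} eigenvalue of $\nabla^2 f$ and sets $\bar\beta(r)=\inf_{|x|\ge r}\{-\lambda_f(x)\}$, yet the worked examples in \cref{sec:Examples} consistently compute $\bar\beta(r)=\inf_{|x|\ge r}\lambda_{\min}(\nabla^2 f_h(x))$, which is the convention that makes $\sup_r\bar\beta(r)>0$ compatible with a potential that is convex at infinity and that emerges from Chen--Wang's $V$ via $V\leftrightarrow -f$. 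Your proposal tacitly adopts the latter reading (``Hessian of $f$ controlled from below''), which is the correct one, but it does not match the literal text of the corollary; a complete proof should surface this translation.
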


\subsection{Example 2}
The next potential function $f$ we consider is given by 
    \begin{equation*}
        f(x)=\left\{
        \begin{aligned}
        &\frac{d+\kappa}{2}\log (1+|x|^2)-\frac{d+\kappa}{2}\log (1+|x|^{-2}) \\
        &(\upsilon_f d+1)\log\log|x|+\left(\upsilon_f+\frac{1}{2}\right)d\log (1+2b(\log |x|)^{-1}) &|x|\ge e,\\
        &(d-1)\log|x|+\frac{d}{2}\log g_{in}^{-1}(|x|)^2+\left(\frac{1}{2}+\upsilon_f\right)d\log\left(1+\frac{1}{2}g_{in}^{-1}(|x|)^2\right)\\
        &\-(d-1)\log g_{in}^{-1}(|x|) +\log g_{in}'(g_{in}^{-1})(|x|)+\upsilon_f d\log b+\left[\left(\frac{1}{2}+\upsilon_f\right)d-1\right]\log 2, &0\le |x|<e.
        \end{aligned}
        \right.
    \end{equation*}
    where $\upsilon_f \in (-\frac{3}{2},\frac{15}{2})$. With the transformation $h$ defined by \eqref{G1}, \eqref{Hmap} and \eqref{Ginbeta2} and $b=\frac{d}{2\kappa}$, the transformed potential is
    \begin{align*}
        f_h(x)=\frac{d}{2}|x|^2+\left(\frac{1}{2}+\upsilon_f\right)d\log\left(1+\frac{1}{2}|x|^2\right)+\upsilon_f d\log b+\left[\left(\frac{1}{2}+\upsilon_f\right)d-1\right]\log 2, \quad\forall x\in \mb{R}^d.
    \end{align*}
    We can find the LSI constant of the transformed density $\pi_h\propto e^{-f_h(x)}$ by \cite[Corollary 1.4]{chen1997estimates}. First, note that the two eigenvalues of $\nabla^2 f_h(x)$ are 
    \begin{align*}
        &\lambda_1(x)=d \left[1+\left(\frac{1}{2}+\upsilon_f\right)\frac{1}{1+\frac{1}{2}|x|^2}\right],~\quad~\text{and}~\quad\lambda_2(x)=d\left[1+\left(\frac{1}{2}+\upsilon_f\right)\frac{1-\frac{1}{2}|x|^2}{(1+\frac{1}{2}|x|^2)^2}\right].
    \end{align*}
    We now consider the following cases.
    \begin{itemize}
        \item [(a)] When $\upsilon_f=-\frac{1}{2}$: $\lambda_1(x)=\lambda_2(x)=d$. The LSI constant $\chlsi=\frac{2}{d}$. 
        \item [(b)] When $\upsilon_f\in (-\frac{1}{2},\frac{15}{2})$, $\lambda_2(x)<\lambda_1(x)$ for all $x\in \mb{R}^d$. Therefore 
        \begin{equation*}
            \Bar{\beta}(r)=\left\{
            \begin{aligned}
            &\left[1-\frac{1}{8}\left(\upsilon_f+\frac{1}{2}\right)\right]d, &0\le r\le \sqrt{6},\\
            &\left[1+\frac{1-\frac{1}{2}r^2}{(1+\frac{1}{2}r^2)^2}\left(\frac{1}{2}+\upsilon_f\right)\right]d, &r>\sqrt{6}.
            \end{aligned}
            \right.
        \end{equation*}
        and 
        \begin{align*}
            \int_0^{a_0}\Bar{\beta}(r)dr=\frac{2}{a_0} \quad  \implies \quad a_0=\left(\frac{2}{1-\frac{1}{8}(\upsilon_f+\frac{1}{2})}d^{-1}\right)^{\frac{1}{2}}. 
        \end{align*}
        The LSI constant is hence given by
        \begin{align*}
            \chlsi=a_0^2\exp\left(\int_0^{a_0} r\Bar{\beta}(r)dr-1\right)=\frac{2}{1-\frac{1}{8}(\upsilon_f+\frac{1}{2})}d^{-1}.
        \end{align*}
        \item [(c)] When $\upsilon_f\in (-\frac{3}{2},-\frac{1}{2})$, $\lambda_1(x)<\lambda_2(x)$ for all $x\in \mb{R}^d$. Therefore 
        \begin{align*}
            \Bar{\beta}(r)=\inf_{|x|>r} \lambda_1(x)=\lambda_1(0)=(\frac{3}{2}+\upsilon_f) d.
        \end{align*}
        and 
        \begin{align*}
            \int_0^{a_0}\Bar{\beta}(r)dr=\frac{2}{a_0} \quad  \implies \quad a_0=\left(\frac{2}{\frac{3}{2}+\upsilon_f}d^{-1}\right)^{\frac{1}{2}}. 
        \end{align*}
        The LSI constant is 
        \begin{align*}
            \chlsi=a_0^2\exp(\int_0^{a_0} r\Bar{\beta}(r)dr-1)=\frac{2}{\frac{3}{2}+\upsilon_f}d^{-1}.
        \end{align*}
    \end{itemize}
   Hence, we have that $\chlsi=O(d^{-1})$. Combined with the fact that the gradient Lipschitz constant of $f_h$ is $L_h=O(d)$, according to \cite{vempala2019rapid}, the iteration complexity to achieve $\epsilon$  error tolerance in KL-divergence is of order $\Tilde{O}({d}/{\epsilon})$, where $\tilde{O}$ hides numerical constants and poly-logarithmic factors.

\subsection{Example 3}\label{sec:Heavytailexample}
The next potential function is given by 
\begin{equation*}
    f(x) = \left\{
    \begin{aligned}
    &d(1+\frac{1}{2b})\log |x|+(\frac{d}{2}+1)\log\log|x|+d\log(1+2b(\log|x|)^{-1})\\ &-(d-1)\log 2-\frac{d}{2}\log b &|x|>e \\
    &(d-1)\log |x|-(d-1)\log g_{in}^{-1}(|x|) +\frac{d}{2}g_{in}^{-1}(|x|)^2+d\log (1+\frac{1}{2}g_{in}^{-1}(|x|)^2) \\&
             +\log g_{in}'(g_{in}^{-1})(|x|), & 0\le |x|\le e
\end{aligned}
           \right.
\end{equation*}

As a point of reference, we compare  the potential above to the potential function $\Tilde{f}(x)=d(1+\frac{1}{2b})\log (1+|x|)+(\frac{d}{2}+1)\log\log(e+|x|)$. According to \cite{wang2014simple}, if $b={d}/{2\vartheta}$, $\Tilde{f}$ satisfies the weak Poincar\'e inequality with $\vartheta$ being the degree of freedom. The corresponding transformed potential is then given by 
$$
f_h(x)=\frac{d}{2}|x|^2+d\log\left(1+\frac{1}{2}|x|^2\right).
$$ 
The density function induced by this potential function satisfies the LSI and the log-concavity assumption. This follows from by calculating the two eigenvalues of the Hessian matrix $\nabla^2 f_h(x)$, that are given by
    \begin{align*}
        \lambda_1=d\left[1+\frac{1}{1+\frac{1}{2}|x|^2}\right], \qquad~\text{and}\quad \lambda_2=d \left[1+\frac{1-\frac{1}{2}|x|^2}{(1+\frac{1}{2}|x|^2)^2}\right].
    \end{align*}
    For all $x\in \mb{R}^d$, we have that $0<\lambda_i\le 2d$ for $i=1,2$. Therefore the transformed potential $f_h$ is gradient Lipschitz with parameter $2d$. To find the LSI parameter we use  \cite[Corollary 1.4]{chen1997estimates}. For all $x\in \mb{R}^d$: $\lambda_2\le \lambda_1$. 
    \begin{equation*}
        \bar{\beta}(r)=\inf_{|x|>r} \lambda_2=\left\{
        \begin{aligned}
        &\frac{7}{8}d, \qquad &r\in (0,\sqrt{6}],\\
        &(1+\frac{1-\frac{1}{2}r^2}{(1+\frac{1}{2}r^2)^2})d, & r\in (\sqrt{6},\infty).
        \end{aligned}
        \right.
    \end{equation*}
    The solution to the equation $\int_0^a \bar{\beta}(r)dr=2/a$ is given by $a_0=\sqrt{{16}/{7d}}$. The LSI constant hence satisfies
    \begin{align*}
        \chlsi\le a_0^2 \exp\left(\int_0^{a_0} r\bar{\beta}(r)dr-1 \right)=\frac{16}{7d}.
    \end{align*}
    According to \cite{vempala2019rapid}, the iteration complexity is of order $\Tilde{O}({d}/{\epsilon})$, where $\tilde{O}$ hides only numerical constants and poly-logarithmic factors. 

    \subsection{Example 4}
  Our next potential function is given by
  \begin{equation*}
        f(x)=\left\{
    \begin{aligned}
    &d(1+\frac{1}{2b})\log |x|+\log\log|x|+\frac{d}{2}\log(1+2b(\log|x|)^{-1})\\&-(\frac{d}{2}-1)\log 2 &~|x|>e \\
    &(d-1)\log |x|-(d-1)\log g_{in}^{-1}(|x|)+\frac{d}{2}g_{in}^{-1}(|x|)^2+\frac{d}{2}\log (1+\frac{1}{2}g_{in}^{-1}(|x|)^2)\\&+\log g_{in}'(g_{in}^{-1})(|x|), &~ 0\le |x|\le e
    \end{aligned}
    \right.
\end{equation*}

    To study the tail-behavior of the original potential function $f$, we compare it to another potential function $\Tilde{f}(x)=d(1+\frac{1}{2b})\log (1+|x|)+\log\log(e+|x|)$. According to \cite{wang2014simple}, if $b=\frac{d}{2\vartheta}$, $\Tilde{f}$ satisfies the weak Poincar\'e inequality with $\vartheta$ being the degree of freedom. But compare to the previous example, it has a heavier tail because $1<\frac{d}{2}+1$.
    
 The transformed potential in this case is given by  
 $$
 f_h(x)=\frac{d}{2}|x|^2+\frac{d}{2}\log\left(1+\frac{1}{2}|x|^2\right).
 $$ 
 Similar to the previous example, the corresponding density function satisfies LSI and log-concavity assumption. The two eigenvalues of the Hessian matrix are:
    \begin{align*}
        \lambda_1=d \left[1+\frac{1}{2}\frac{1}{1+\frac{1}{2}|x|^2}\right],
\qquad\text{and}\quad         \lambda_2=d \left[1+\frac{1}{2}\frac{1-\frac{1}{2}|x|^2}{(1+\frac{1}{2}|x|^2)^2}\right].
    \end{align*}
    For all $x\in \mb{R}^d$, $0<\lambda_i\le \frac{3}{2}d$ for $i=1,2$. Therefore the transformed potential $f_h$ is gradient Lipschitz with parameter $\frac{3}{2}d$. To find the LSI parameter we use \cite[Cororllary 1.4]{chen1997estimates}. For all $x\in \mb{R}^d$: $\lambda_2\le \lambda_1$. Furthermore, we have
    \begin{equation*}
        \bar{\beta}(r)=\inf_{|x|>r} \lambda_2=\left\{
        \begin{aligned}
        &\frac{15}{16}d, \qquad &r\in (0,\sqrt{6}],\\
        &\left(1+\frac{1}{2}\frac{1-\frac{1}{2}r^2}{(1+\frac{1}{2}r^2)^2}\right)d, & r\in (\sqrt{6},\infty).
        \end{aligned}
        \right.
    \end{equation*}
    The solution to the equation $\int_0^a \bar{\beta}(r)dr=2/a$ is then $a_0=\sqrt{\frac{32}{15d}}$. The LSI constant $\chlsi$ satisfies
    \begin{align*}
        \chlsi\le a_0^2 \exp\left(\int_0^{a_0} r\bar{\beta}(r)dr-1\right)=\frac{32}{15d}.
    \end{align*}
    According to \cite{vempala2019rapid}, the iteration complexity is of order $\Tilde{O}({d}/{\epsilon})$, where $\tilde{O}$ hides only numerical constants and poly-logarithmic factors. 
    
    \subsection{Example 5} 
We next consider the following potential function given by
\begin{equation*}
        f(x)=\left\{
    \begin{aligned}
         &d(1+\frac{1}{2b})\log |x|-(\frac{d}{4}-1)\log\log|x|+\frac{d}{4}\log(1+2b(\log|x|)^{-1})\\
         &-(\frac{d}{4}-1)\log 2+\frac{d}{4}\log b &~|x|>e \\
        &(d-1)\log |x|-(d-1)\log g_{in}^{-1}(|x|)+\frac{d}{2}g_{in}^{-1}(|x|)^2 +\log g_{in}'(g_{in}^{-1})(|x|) &~ 0\le |x|\le e
    \end{aligned}
    \right.    
\end{equation*}
To study the tail-behavior of the original potential function $f$, we compare it to another potential function $\Tilde{f}(x)=d(1+\frac{1}{2b})\log (1+|x|)-(\frac{d}{4}-1)\log\log(e+|x|)$. According to \cite{wang2014simple}, with $b=\frac{d}{2\vartheta}$, if $d<4$, $\Tilde{f}$ satisfies the weak Poincar\'e inequality with $\vartheta$ being the degree of freedom. If $d=4$, $\Tilde{f}$ satisfies Poincar\'e inequality with $\vartheta$ being the degree of freedom. If $d>4$, $\Tilde{f}$ satisfies the super Poincar\'e inequality with $\vartheta$-degree of freedom.

The transformed potential is given by  
$$
f_h(x)=\frac{d}{2}|x|^2+\frac{d}{4}\log\left(1+\frac{1}{2}|x|^2\right).
$$
The corresponding density function satisfies LSI and log-concavity assumption. The two eigenvalues of the Hessian matrix are:
    \begin{align*}
        \lambda_1=d\left[ 1+\frac{1}{4}\frac{1}{1+\frac{1}{2}|x|^2}\right], \quad\text{and}\quad
        \lambda_2=d\left[1+\frac{1}{4}\frac{1-\frac{1}{2}|x|^2}{(1+\frac{1}{2}|x|^2)^2}\right].
    \end{align*}
    For all $x\in \mb{R}^d$, $0<\lambda_i\le \frac{5}{4}d$ for $i=1,2$. Therefore the transformed potential $f_h$ is gradient Lipschitz with parameter $\frac{3}{2}d$. To find the LSI parameter we use \cite[ Cororllary 1.4]{chen1997estimates}. For all $x\in \mb{R}^d$: $\lambda_2\le \lambda_1$. 
    \begin{equation*}
        \bar{\beta}(r)=\inf_{|x|>r} \lambda_2=\left\{
        \begin{aligned}
        &\frac{31}{32}d, \qquad &r\in (0,\sqrt{6}],\\
        &\left(1+\frac{1}{2}\frac{1-\frac{1}{2}r^2}{(1+\frac{1}{2}r^2)^2}\right)d, & r\in (\sqrt{6},\infty).
        \end{aligned}
        \right.
    \end{equation*}
    The solution to the equation $\int_0^a \bar{\beta}(r)dr=2/a$ is then $a_0=\sqrt{{64}/{31d}}$. The LSI constant $\chlsi$ satisfies
    \begin{align*}
        \chlsi\le a_0^2 \exp\left(\int_0^{a_0} r\bar{\beta}(r)dr-1 \right)=\frac{64}{31d}.
    \end{align*}
    According to \cite{vempala2019rapid}, the iteration complexity is of order $\Tilde{O}({d}/{\epsilon})$, where $\tilde{O}$ hides only numerical constants and poly-logarithmic factors. 
    \subsection{Example 6}
 As the limiting example of the previous three examples, we consider the potential function
 \begin{align*}
     f(x)=\begin{cases}
      d(1+\frac{1}{2b})\log |x|-(\frac{d}{2}-1)\log\log|x|+\log 2+\frac{d}{2}\log b &~\text{for}~|x|>e \\
    (d-1)\log |x|-(d-1)\log g_{in}^{-1}(|x|)+\frac{d}{2}g_{in}^{-1}(|x|)^2 +\log g_{in}'(g_{in}^{-1})(|x|), &~\text{for}~ 0\le |x|\le e
     \end{cases}
 \end{align*}
 We introduce $\Tilde{f}(x)=d(1+\frac{1}{2b})\log (1+|x|)-(\frac{d}{2}-1)\log\log(e+|x|)$ which has similar tail-behavior as the potential $f$ above. According to \cite{wang2014simple}, with $b=\frac{d}{2\vartheta}$, if $d=1$, $\Tilde{f}$ satisfies the weak Poincar\'e inequality with $\vartheta$ being the degree of freedom. If $d=2$, $\Tilde{f}$ satisfies Poincar\'e inequality with $\vartheta$ being the degree of freedom. If $d>2$, $\Tilde{f}$ satisfies the super Poincar\'e inequality with $\vartheta$-degree of freedom and it induces a density function which has heavier tail than the multivariate $t$-distribution with $\vartheta$-degree of freedom.
 
 The transformed potential is $f_h(x)=\frac{d}{2}|x|^2$. The Hessian matrix is $\nabla^2 f_h(x)=d I_d$. Therefore $f_h$ is log-concave with parameter $d$ and the corresponding density satisfies LSI with parameter $\chlsi \le {2}/{d}$.    According to \cite{vempala2019rapid}, the iteration complexity is of order $\Tilde{O}({d}/{\epsilon})$, where $\tilde{O}$ hides only numerical constants and poly-logarithmic factors.

\section{Proofs} 
In this section, we will prove the theorems stated in Sections \ref{sec:THM}-\ref{sec:Examples}.
\subsection{Analysis of the transformation maps}\label{AnalyTran}
In this section we first analyze the transformation map induced by $g$ defined in \eqref{G1}.
\begin{lemma}\label{TRPotential} If the potential function $f$ satisfies \cref{ass:A0}, then we have 
\begin{equation}\label{GradientTHM} \small
    \nabla f_h(x)=\left\{
    \begin{aligned}
    &\left[ f'(g_{in}(|x|)) g_{in}'(|x|)-\frac{g_{in}''(|x|)}{g_{in}'(|x|)}-(d-1)\frac{g_{in}'(|x|)}{g_{in}(|x|)}+\frac{d-1}{|x|} \right]\frac{x}{|x|}  &|x|<b^{-\frac{1}{\beta}},\\
    &\left[ f'(e^{b|x|^\beta})b\beta|x|^{\beta-1}e^{b|x|^\beta}-\beta bd|x|^{\beta-1}+\frac{d-\beta}{|x|} \right]\frac{x}{|x|}  &|x|\ge b^{-\frac{1}{\beta}}.
    \end{aligned}
    \right.
\end{equation}
and $\nabla^2 f_h(x)$ has two eigenvalues $\lambda_1=\lambda_1(|x|)$ and $\lambda_2=\lambda_2(|x|)$ with $\lambda_1, \lambda_2$ defined as
\begin{enumerate}
    \item When $|x|<b^{-\frac{1}{\beta}}$: 
       \begin{align}
       \lambda_1(|x|)&= f''(g_{in}(|x|))(g_{in}'(|x|))^2+f'(g_{in}(|x|))g_{in}''(|x|)-\frac{g_{in}^{(3)}(|x|)}{g_{in}'(|x|)}+(\frac{g_{in}''(|x|)}{g_{in}'(|x|)})^2 \nonumber \\
        &\quad -(d-1)\frac{g_{in}''(|x|)}{g_{in}(|x|)}+(d-1)(\frac{g_{in}'(|x|)}{g_{in}(|x|)})^2-(d-1)|x|^{-2},  \label{EI1}\\
        \lambda_2(|x|)&=f'(g_{in}(|x|))g_{in}'(|x|)|x|^{-1}-\frac{g_{in}''(|x|)}{|x|g_{in}'(|x|)}-(d-1)\frac{g_{in}'(|x|)}{|x|g_{in}(|x|)}+(d-1)|x|^{-2}. \label{EI2}
     \end{align}
     \item When $|x|\ge b^{-\frac{1}{\beta}}$:
     \begin{align} \small
       \lambda_1(|x|)&=f''(e^{b|x|^\beta})b^2\beta^2|x|^{2(\beta-1)}e^{2b|x|^\beta}+f'(e^{b|x|^\beta})(\beta(\beta-1)b|x|^{\beta-2}+\beta^2b^2|x|^{2(\beta-1)})e^{b|x|^\beta}, \nonumber\\
       &\quad -b\beta(\beta-1)d|x|^{\beta-2}-(d-\beta)|x|^{-2}  \label{EO1}\\
       \lambda_2(|x|)&=f'(e^{b|x|^\beta})b\beta|x|^{\beta-1}e^{b|x|^\beta}|x|^{-1}-b\beta d|x|^{\beta-2}+(d-\beta)|x|^{-2}. \label{EO2}
     \end{align}
\end{enumerate}
\end{lemma}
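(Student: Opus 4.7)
The plan is to exploit the radial structure throughout: since $f$ is isotropic under \cref{ass:A0} and $h$ is built in the radial form \eqref{Hmap}, the transformed potential $f_h$ is itself a radial function, so both its gradient and Hessian are determined by a single one-dimensional function $F:(0,\infty)\to\mb{R}$ and its first two derivatives. Concretely, I would show $f_h(y)=F(|y|)$ with
\begin{align*}
F(r)=f(g(r))-\log g'(r)-(d-1)\log g(r)+(d-1)\log r,
\end{align*}
after which the lemma reduces to plugging the two branches of $g$ in \eqref{G1} into $F'(r)$ and $F''(r)$.

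First I would compute the Jacobian of the radial map $h(y)=g(|y|)y/|y|$. Writing $r=|y|$ and $\hat y=y/|y|$, a direct coordinate-wise differentiation gives
\begin{align*}
\nabla h(y)=g'(r)\,\hat y\hat y^{T}+\frac{g(r)}{r}\bigl(I_d-\hat y\hat y^{T}\bigr),
\end{align*}
so $\nabla h(y)$ has eigenvalue $g'(r)$ (radial direction) and $g(r)/r$ of multiplicity $d-1$ (tangential directions); hence $\det\nabla h(y)=g'(r)\,(g(r)/r)^{d-1}>0$ by \cref{PropertyG}. Combined with $|h(y)|=g(r)$ and the isotropy of $f$, we obtain $f_h(y)=f(g(r))-\log g'(r)-(d-1)\log g(r)+(d-1)\log r=F(r)$.

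Differentiating a radial function yields $\nabla f_h(y)=F'(r)\hat y$ and
\begin{align*}
\nabla^{2}f_h(y)=F''(r)\,\hat y\hat y^{T}+\frac{F'(r)}{r}\bigl(I_d-\hat y\hat y^{T}\bigr),
\end{align*}
so the Hessian has exactly two eigenvalues, $\lambda_1=F''(r)$ (simple, radial) and $\lambda_2=F'(r)/r$ (multiplicity $d-1$, tangential). At this point the formulas \eqref{EI1}--\eqref{EI2} for $r<b^{-1/\beta}$ follow immediately by writing $F'$ and $F''$ in terms of $g_{in},g_{in}',g_{in}'',g_{in}^{(3)}$; the only minor care is to rewrite $-g_{in}^{(3)}/g_{in}'+(g_{in}''/g_{in}')^{2}=-(g_{in}''/g_{in}')'$ and analogously $-g_{in}''/g_{in}+(g_{in}'/g_{in})^{2}=-(g_{in}'/g_{in})'$.

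For $r\ge b^{-1/\beta}$ we substitute $g(r)=e^{br^{\beta}}$, so $g'(r)=b\beta r^{\beta-1}e^{br^{\beta}}$, $g'(r)/g(r)=b\beta r^{\beta-1}$ and $g''(r)/g'(r)=(\beta-1)/r+b\beta r^{\beta-1}$. Plugging into $F'(r)$ gives the gradient in the second line of \eqref{GradientTHM}, while for $F''(r)$ the collapse $-g'''/g'+(g''/g')^{2}=-(g''/g')'=(\beta-1)/r^{2}-b\beta(\beta-1)r^{\beta-2}$, together with $-(d-1)(g'/g)'=-(d-1)b\beta(\beta-1)r^{\beta-2}$, yields \eqref{EO1}; dividing $F'(r)$ by $r$ gives \eqref{EO2}. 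The only mildly delicate point is bookkeeping in the $F''$ computation for $r\ge b^{-1/\beta}$ --- combining the $1/r^{2}$ and $r^{\beta-2}$ contributions from $-g'''/g'+(g''/g')^{2}$ with those from $-(d-1)g''/g+(d-1)(g'/g)^{2}$ and the trailing $-(d-1)/r^{2}$, and verifying that they telescope to $-(d-\beta)/r^{2}-b\beta(\beta-1)d\,r^{\beta-2}$; this is routine but easy to miscount, and is the main place where a slip would change the final formula.
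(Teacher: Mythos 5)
Your proposal is correct and follows essentially the same route as the paper: write $f_h$ as a radial function $F(r)$ using $\det\nabla h(y)=g'(r)(g(r)/r)^{d-1}$, then use the standard radial Hessian decomposition $\nabla^2 f_h=F''(r)\hat y\hat y^T+\frac{F'(r)}{r}(I_d-\hat y\hat y^T)$ to read off $\lambda_1=F''$ (simple) and $\lambda_2=F'/r$ (multiplicity $d-1$), and finally substitute the two branches of $g$. The paper expresses the Hessian as $F_1(|x|)\frac{xx^T}{|x|^2}+F_2(|x|)I_d$ with $F_2=F'/r$ and $F_1=F''-F'/r$, which is the same decomposition in slightly different notation; your explicit Jacobian computation and the logarithmic-derivative collapse $-g'''/g'+(g''/g')^2=-(g''/g')'$ are minor streamlinings, not a different argument.
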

\begin{proof}[Proof of \cref{TRPotential}]\label{PFTRPotential}
For a general transformation map induced by $h$, the transformed potential $f_h$ can be represented as
    \begin{align*}
            f_h(x)&=f(g(|x|))-\log \det (\nabla h(x)) \\
                  &=f(g(|x|))-\log g'(|x|)-(d-1)\log g(|x|)+(d-1)\log |x|.
    \end{align*}
     The gradient of the transformed potential $f_h$ is
    \begin{align}\label{Gradient}
        \nabla f_h(x)&=\left[ f'(g(|x|))g'(|x|)-\frac{g''(|x|)}{g'(|x|)}-(d-1)\frac{g'(|x|)}{g(|x|)}+\frac{d-1}{|x|} \right]\frac{x}{|x|}.
    \end{align}
    Now, \eqref{GradientTHM} follows immediately as a consequence of \eqref{G1} and \eqref{Gradient}. The Hessian matrix of $f_h$ can be represented as
    \begin{align*}
        \nabla^2 f_h(x)&=F_1(|x|)\frac{xx^T}{|x|^2}+F_2(|x|) I_d
    \end{align*}
    with 
    \begin{align*}
         F_1(|x|)&=f''(g(|x|))g'(|x|)^2+f'(g(|x|))g''(|x|)-f'(g(|x|))\frac{g'(|x|)}{|x|}-\frac{g'''(|x|)}{g'(|x|)}\\
          &\ \ \ \ +(\frac{g''(|x|)}{g'(|x|)})^2-(d-1)\frac{g''(|x|)}{g(|x|)}+(d-1)(\frac{g'(|x|)}{g(|x|)})^2\\
          &\ \ \ \ +(d-1)\frac{g'(|x|)}{g(|x|)}\frac{1}{|x|}-\frac{2(d-1)}{|x|^2}+\frac{g''(|x|)}{|x|g'(|x|)},\\
          F_2(|x|)&=\left(f'(g(|x|))g'(|x|)-\frac{g''(|x|)}{g'(|x|)}-(d-1)\frac{g'(|x|)}{g(|x|)}+\frac{d-1}{|x|}\right)|x|^{-1}.
    \end{align*}
    Therefore the two eigenvalues of $\nabla ^2 f_h(x)$, $\lambda_1$ and $\lambda_2$ can be written as 
    \begin{align}
        \lambda_1&=f''(g(|x|))g'(|x|)^2+f'(g(|x|))g''(|x|)-\frac{g'''(|x|)}{g'(|x|)} +(\frac{g''(|x|)}{g'(|x|)})^2 \nonumber \\
         & \quad -(d-1)\frac{g''(|x|)}{g(|x|)}+(d-1)(\frac{g'(|x|)}{g(|x|)})^2-\frac{(d-1)}{|x|^2}, \label{GE1} \\
          \lambda_2&=\left(f'(g(|x|))g'(|x|)-\frac{g''(|x|)}{g'(|x|)}-(d-1)\frac{g'(|x|)}{g(|x|)}+\frac{d-1}{|x|}\right)|x|^{-1} \label{GE2}.
    \end{align}
   The conclusions in \eqref{EI1},\eqref{EI2},\eqref{EO1},\eqref{EO2} can be calculated directly from \eqref{G1}, \eqref{GE1} and \eqref{GE2}. 
\end{proof}
With the above result on the transformation map $h$, we can prove  \cref{PropertyG}. 

\begin{proof}[Proof of \cref{PropertyG}:] We first show that for all $\beta\in [1,2]$, $g\in\mc{C}^3((0,\infty))$. It suffices to show that $g$ is three times continuously differentiable at $r=b^{-1/\beta}$. Based on \eqref{G1}, we have
    \begin{align*}
     &g_{in}({b^{-\frac{1}{\beta}}}_{-})=e=e^{b r^\beta}|_{r=b^{-\frac{1}{\beta}}},\\
     &g_{in}'({b^{-\frac{1}{\beta}}}_{-})=\beta b^{\frac{1}{\beta}}e=(e^{b r^\beta})'|_{r=b^{-\frac{1}{\beta}}},\\
     &g_{in}''({b^{-\frac{1}{\beta}}}_{-})=\beta(2\beta-1)b^{\frac{2}{\beta}}e=(e^{b r^\beta})''|_{r=b^{-\frac{1}{\beta}}},\\
     &g_{in}'''({b^{-\frac{1}{\beta}}}_{-})=\beta(5\beta^2-6\beta+2)b^{\frac{3}{\beta}}e=(e^{b r^\beta})^{(3)}|_{r=b^{-\frac{1}{\beta}}}.
    \end{align*}
Next we show $g$ is monotone increasing. From \cref{ass:G0}, we know that  $g_{in}$ is increasing on the interval $(0,b^{-\frac{1}{\beta}})$. For $r\in [b^{-\frac{1}{\beta}},\infty)$, $g'(r)=b\beta r^{\beta-1}e^{br^\beta}>0$. Combined with the fact that $g\in \mc{C}^3((0,\infty))$, we obtain that $g$ is monotone increasing on the interval $(0,\infty)$. Furthermore, $g(0)=g_{in}(0)=0$ and $\lim_{r\to +\infty}g(r)=\lim_{r\to+\infty} e^{br^\beta}=+\infty$. Therefore $g$ is also onto and invertible. 
\end{proof}
\begin{corollary}\label{EvalueC2} With function $g$ defined in \eqref{G1} and $g_{in}$ satisfying \cref{ass:G0}, if the potential function $f$ satisfies \cref{ass:A0}, then the transformed potential function $f_h$ defined by \eqref{Tdens} is twice continuously differentiable, i.e $f_h\in \mc{C}^2(\mb{R}^d)$.
\end{corollary}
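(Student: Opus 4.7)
The plan is to leverage isotropy: since $f$ is isotropic by \cref{ass:A0} and the map $h$ in \eqref{Hmap} is constructed radially via $g$, the transformed potential is isotropic as well, and one may write $f_h(y)=\widetilde{f}_h(|y|)$, with
$$\widetilde{f}_h(r)=f(g(r))-\log g'(r)-(d-1)\log\bigl(g(r)/r\bigr), \qquad r>0.$$
The task then reduces to checking that this scalar function yields a $\mathcal{C}^2$ isotropic function on $\mathbb{R}^d$. I would split the verification into three regions: (a) $|y|>b^{-1/\beta}$, (b) $0<|y|<b^{-1/\beta}$, and (c) $y=0$, together with the transition point $|y|=b^{-1/\beta}$.

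On the two annular regions (a) and (b), smoothness is immediate: by \cref{PropertyG}, $g\in\mathcal{C}^3((0,\infty))$, the maps $y\mapsto|y|$ and $y\mapsto y/|y|$ are smooth for $y\neq0$, and $f\in\mathcal{C}^2(\mathbb{R}^d)$ by \cref{ass:A0}, so the composition $f\circ h$ and the logarithmic Jacobian $\log\det\nabla h$ are $\mathcal{C}^2$ on $\mathbb{R}^d\setminus\{0\}$. The transition point $|y|=b^{-1/\beta}$ is handled for free because \cref{PropertyG} already establishes $\mathcal{C}^3$-gluing of $g$ at $b^{-1/\beta}$, which passes to $\widetilde{f}_h$.

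The substantive step is region (c), i.e.\ $\mathcal{C}^2$-regularity at the origin. From \cref{TRPotential}, on $|y|<b^{-1/\beta}$,
$$\nabla f_h(y)=\phi(|y|)\,\frac{y}{|y|}, \qquad \nabla^2 f_h(y)=F_1(|y|)\,\frac{yy^\top}{|y|^2}+F_2(|y|)\,I_d,$$
so by rotational symmetry it suffices to verify that $\phi(r)/r$ admits a finite limit at $0_+$ (whence $\nabla f_h(y)=\bigl(\phi(|y|)/|y|\bigr)y$ extends continuously with value $0$), and that $F_1(r)\to 0$ while $F_2(r)$ admits a finite limit at $0_+$ (whence $\nabla^2 f_h(0)=F_2(0)\,I_d$). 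Rewriting $\phi(r)$ from \eqref{GradientTHM} as
$$\phi(r)=f'(g_{in}(r))g_{in}'(r)-\frac{d}{dr}\log g_{in}'(r)-(d-1)\frac{d}{dr}\log\bigl(g_{in}(r)/r\bigr),$$
makes the first three limits in \cref{ass:G0} exactly the statements that $\phi(r)/r$ is bounded as $r\to0_+$; together with the fact that $\phi$ is continuously differentiable (from $f\in\mathcal{C}^2$ and $g_{in}\in\mathcal{C}^2$), these suffice for the gradient to be continuous at the origin with value $0$.

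The main obstacle is the Hessian step, namely showing $F_1(r)\to 0$ as $r\to 0_+$. The cleanest route is to avoid the raw formula from the proof of \cref{TRPotential} and instead package $F_1$ and $F_2$ in terms of $\frac{d^2}{dr^2}\log g_{in}'(r)$ and $\frac{d^2}{dr^2}\log(g_{in}(r)/r)$: with this rewriting, the last two limits of \cref{ass:G0} give boundedness of the purely geometric contributions, the $f$-dependent contribution $f''(g_{in})(g_{in}')^2+f'(g_{in})g_{in}''$ is continuous at $0$ by \cref{ass:A0} and smoothness of $g_{in}$, and the forced identity $F_1(0)=0$ is then seen as the consistency requirement that the Hessian be a scalar matrix at the origin (any non-zero $F_1(0)$ would make $\nabla^2 f_h(y)$ direction-dependent as $y\to 0$, contradicting continuity combined with rotational symmetry). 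This yields $f_h\in\mathcal{C}^2(\mathbb{R}^d)$.
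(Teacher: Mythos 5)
Your overall structure is the same as the paper's: show $f_h\in\mathcal{C}^2(\mathbb{R}^d\setminus\{0\})$ via $g\in\mathcal{C}^3((0,\infty))$, $f\in\mathcal{C}^2$, and the explicit eigenvalue formulas from \cref{TRPotential}, then reduce to checking the origin by rewriting $\lambda_1$, $\lambda_2$ in terms of $\tfrac{d^2}{dr^2}\log g_{in}'(r)$, $\tfrac{d^2}{dr^2}\log(g_{in}(r)/r)$ and their first-order analogues, so that the finite-limit clauses of \cref{ass:G0} apply directly.

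One step in your argument is circular as written. To justify $F_1(0)=0$ you argue that a nonzero $F_1(0)$ would make $\nabla^2 f_h(y)$ direction-dependent as $y\to 0$, ``contradicting continuity combined with rotational symmetry''---but continuity of the Hessian at the origin is precisely what is being established, so it cannot be assumed as a premise. The gap closes cleanly if you instead observe that, writing $f_h$ as a radial function of $r=|y|$, one has $\lambda_1(r)=f_h''(r)$ and $\lambda_2(r)=f_h'(r)/r$, hence $F_1(r)=\lambda_1(r)-\lambda_2(r)=f_h''(r)-f_h'(r)/r$. Boundedness of $\lambda_2$ near $0$ forces $f_h'(0_+)=0$, so by the mean value theorem $f_h'(r)/r=f_h''(\xi_r)$ for some $\xi_r\in(0,r)$, and therefore $F_1(r)=f_h''(r)-f_h''(\xi_r)\to 0$ as $r\to 0_+$, using that $\lim_{r\to 0_+}f_h''(r)$ exists (the $\lambda_1$ limit). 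With that replacement, $F_1(0)=0$ is a conclusion rather than a consistency assertion, and the proof is complete. For what it is worth, the paper's own proof of this corollary is terser: it states that finiteness of both limits $\lim_{r\to 0_+}\lambda_i(r)$ implies $\mathcal{C}^2$ at the origin and likewise leaves the equality $\lambda_1(0)=\lambda_2(0)$ implicit, so your explicit attention to $F_1(0)=0$ is a welcome addition once the justification is corrected.
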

\begin{proof}[Proof of Corollary \ref{EvalueC2}:] Under the assumptions in \cref{EvalueC2}, with the results in \cref{PropertyG} and \eqref{GE1},\eqref{GE2}, we have $f_h\in \mc{C}^2(\mb{R}^d \setminus \{0\})$. Therefore it remains to show that $\lim_{|x|\to 0^+}\lambda_i(|x|)$ are well-defined for $i=1,2$. According to \eqref{Tdens}, we can represent the two eigenvalues of $\nabla^2 f_h(x)$ as for all $r<b^{-\frac{1}{\beta}}$: 
\begin{align*}
    \lambda_1(r)&= f_h''(r)=f''(g_{in}(r))g_{in}'(r)^2+f'(g_{in}(r))g_{in}''(r)-(d-1)(\frac{d^2}{dr^2}\log \frac{g_{in}(r)}{r})+\frac{d^2}{dr^2}\log g_{in}'(r)   \\
    \lambda_2(r)&=\frac{f_h'(r)}{r}=f'(g_{in}(r))\frac{g_{in}'(r)}{r}-\frac{\frac{d}{dr}\log g_{in}'(r)}{r}-(d-1)\frac{\frac{d}{dr}\log \frac{g_{in}(r)}{r}}{r}
\end{align*}
Since $f\in \mc{C}^2(\mb{R})$ and $g$ satisfies \cref{ass:G0}, $\lim_{r\to 0_+}|\lambda_i(r)|<\infty$ for $i=1,2$, which implies that $f_{h}$ is twice continuously differentiable at the origin. Therefore $f_h\in \mc{C}^2(\mb{R}^d)$.
\end{proof}

\subsection{Proof of \cref{PITLMC}}\label{PFTHM1}
We first recall a few definition below. Our proof is based on connections between Lyapunov-based techniques and functional inequality-based techniques for proving ergodicity of diffusion process~\cite{bakry2008rate}.

\begin{definition}[Dissipativity condition]\label{DissC} The Langevin diffusion with drift function $b(x)$ is said to satisfy the dissipativity condition if there exists constants $r,M>0$ such that for all $|x|>M$: $\langle b(x),x \rangle\le -r|x|$.
\end{definition}
\begin{definition}[Lyapunov condition]\label{LyaFun} A function $V\in\mc{D}({\mc{L}})$ with $V\ge 1$ is a Lyapunov function if there exist constants $\lambda, c>0$ and a measurable set $K\subset \mb{R}^d$ such that $\mc{L}V\le \lambda V(-1+c1_{K})$. Equivalently, we say the $\mc{L}$ satisfies the Lyapunov condition. 
\end{definition}
\begin{lemma}\label{DisToLya} Consider the dynamics in~\eqref{TLanD}. If the drift function $-\nabla f_{h}$ satisfies the dissipativity condition with $r>0, M=8(d-1)/r$, then the infinitesimal generator $\mc{L}_h$ of ~\eqref{TLanD} satisfies Lyapunov condition.
\end{lemma}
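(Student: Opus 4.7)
The plan is to exhibit an explicit Lyapunov function of the exponential-in-distance type. Specifically I would try $V(x) = \exp(s\sqrt{1+|x|^2})$ with $s>0$ to be chosen; this is smooth on $\mathbb{R}^d$, bounded below by $1$, and sits in the domain of $\mathcal{L}_h$ under the standing assumption~\cref{ass:A0}. Writing $\varphi(x) = \sqrt{1+|x|^2}$, one has the direct identities $\nabla\varphi = x/\varphi$, $|\nabla\varphi|^2 \le 1$, and $\Delta\varphi = (d-1)/\varphi + 1/\varphi^3 \le d/\varphi$. From this and the chain rule
\[
\frac{\mathcal{L}_h V}{V} = -s\,\frac{\langle \nabla f_h(x),\, x\rangle}{\varphi(x)} + s^2|\nabla\varphi(x)|^2 + s\,\Delta\varphi(x).
\]

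Next I would apply the dissipativity hypothesis $\langle \nabla f_h(x), x\rangle \ge r|x|$ valid for $|x|>M = 8(d-1)/r$. For such $x$, since $|x|/\varphi(x) \to 1$ as $|x| \to \infty$, one can further restrict to $|x|$ large enough that $|x|/\varphi(x) \ge 1/2$ (absorbed into the compact set $K$), so that the drift term is bounded above by $-sr/2$. Using the crude bounds above,
\[
\frac{\mathcal{L}_h V}{V}\ \le\ -\frac{sr}{2} + s^2 + \frac{sd}{\varphi(x)} \ \le\ -\frac{sr}{2} + s^2 + \frac{sd}{|x|}.
\]
On $|x|>M$ the last term is at most $sr/(8(d-1)/r)\cdot d = sr\,d/(8(d-1)) \le sr/4$ for $d\ge 2$ (the degenerate case $d=1$ being even easier since the $\Delta\varphi$ contribution vanishes at infinity). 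Choosing $s = r/8$, the right-hand side is at most $-sr/16 =: -\lambda$, so $\mathcal{L}_h V \le -\lambda V$ outside a sufficiently large ball $K$.

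To convert this into the form required by~\cref{LyaFun}, namely $\mathcal{L}_h V \le \lambda V(-1 + c\mathbf{1}_K)$, I would enlarge $K$ to include the region where the drift estimate $|x|/\varphi \ge 1/2$ fails and note that on the compact set $K$, $\mathcal{L}_h V$ is bounded by continuity (here I use that $f_h \in \mathcal{C}^2$ by~\cref{EvalueC2}, hence $\nabla f_h$ is continuous and bounded on $K$), while $V \ge 1$. One then picks $c$ large enough that $\lambda V(c-1)$ dominates $\mathcal{L}_h V + \lambda V$ on $K$, which is possible precisely because both quantities are bounded on the compactum.

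The main obstacle is tracking how the parameter $M = 8(d-1)/r$ forces the allowable range of $s$: this is where the numerical constant $8$ enters, and one must be careful that the $\Delta\varphi$ correction (which is of order $d/|x|$) does not overwhelm the gain from dissipativity. The threshold $M = 8(d-1)/r$ is exactly calibrated so that $(d-1)/M = r/8$, leaving enough room after absorbing the $s^2$ and $s\Delta\varphi$ terms to obtain a strictly negative coefficient; getting a clean statement for all $d\ge 1$ (rather than just $d\ge 2$) may require a slightly more careful choice of $\varphi$ or of $s$.
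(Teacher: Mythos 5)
Your proposal is correct and follows essentially the same route as the paper: construct an exponential-in-distance Lyapunov function, compute $\mathcal{L}_h V/V$, use dissipativity to make it strictly negative outside a ball whose radius is tied to $M=8(d-1)/r$, and absorb the compact region into the indicator via continuity of $\nabla f_h$. The only real difference is implementation: the paper takes $V=e^{a|x|}$ for $|x|\ge M$, $V\equiv 1$ near the origin, and hand-constructs an interpolating $C^2$ bridge $P$ on the annulus $M/2<|x|<M$, whereas you use the globally smooth $V=\exp\bigl(s\sqrt{1+|x|^2}\bigr)$, which sidesteps the interpolation entirely (at the cost of the extra $s^2|\nabla\varphi|^2$ and $s\Delta\varphi$ terms, which you correctly control via $M$). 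Your arithmetic is slightly loose — with $s=r/8$ the bound $-sr/4+s^2$ equals $-r^2/64$, not $-sr/16=-r^2/128$ — but it is still strictly negative, so the conclusion stands, and the parameter choice could anyway be tuned.
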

\begin{proof}[Proof of \cref{DisToLya}]\label{PfDistoLya}
We first construct a Lyapunov function $V$ with respect to the generator $\mc{L}_h$ as
\begin{equation*}
    V(x)=\left\{
    \begin{aligned}
    & 1 \ \ \ \   &|x|\le \frac{M}{2},\\
    & P(|x|)   & \frac{M}{2}<|x|<M, \\
    & e^{a|x|}    & |x|\ge M,
    \end{aligned}
    \right.
\end{equation*}
where $P: [\frac{M}{2},M] \to [1, e^{aM}] $ is a monotone increasing function such that $V\in \mc{C}^2(\mb{R}^d)$ and $V\ge 1$ for all $x\in \mb{R}^d$. When $|x|\ge M$,  we have that
\begin{align*}
    \mc{L}_h V(x)&=-\nabla f_h (x)\cdot \nabla (e^{a|x|})+\Laplace (e^{a|x|}) \\
    &=-\nabla f_h(x)\cdot (ae^{a|x|}\frac{x}{|x|})+ae^{a|x|}\left( \frac{d}{|x|}+a-\frac{1}{|x|} \right)\\
    &\le ae^{a|x|}\left( -r+a+\frac{d-1}{|x|} \right).
\end{align*}
Picking $a=r/2$, we obtain that 
\begin{align*}
    \mc{L}_h V(x)\le  \frac{r}{2}V(x)(-\frac{r}{2}+\frac{d-1}{|x|}), \qquad \qquad \forall |x|\ge M.
\end{align*}
Since $M=8(d-1)/r>4(d-1)/r$, we obtain that $\mc{L}_h V(x)\le -(r^2/8)V(x)$ for all $|x|\ge M$.\\
When $0\le |x|<M$, by the fact that $V\in \mc{C}^2(\mb{R}^d)$ and $V\ge 1$, there exists $A_{r,d}$ such that
\begin{align*}
    \frac{\mc{L}_h V(x)}{V(x)}\le A_{r,d} \qquad\qquad \forall \ 0\le |x|<M,
\end{align*}
where $A_{r,d}=\max_{4(d-1)/r \le |x|\le 8(d-1)/r} \left( -r P'(|x|)+\Laplace (P(|x|))\right)\vee 0$.\\
\par Therefore if $-\nabla f_h$ satisfies dissipativity condition with constant $r>0$, the corresponding generator $\mc{L}_h$ satisfies Lyapunov condition with $\lambda=\lambda_r=r^2/8$, $c=c_{r,d}=A_{r,d}/\lambda_r$ an
\begin{align}\label{eq:temp01}
    K=K_{r,d}=\{x\in \mb{R}^d: 0\le |x|\le 8(d-1)/r \}.
\end{align}
\end{proof}
We further recall additional definitions to proceed.
\begin{definition}[Local Poincar\'{e} inequality]\label{LocPI} The Markov triple $(\mb{R}^d,\mu,\Gamma)$ satisfies a local Poincar\'{e} inequality on a measurable set $K\subset \mb{R}^d$ with $\mu(K)\in (0,\infty)$ if for some constant $C_K$ and every function $\phi\in \mc{D}(\mc{E})$:
\begin{align*}
    \int_K (\phi-m_K)^2 d\mu \le C_K \int_K \Gamma(\phi) d\mu  
\end{align*}
where $m_K=\int_K \phi\  d\mu /\mu(K)$.
\end{definition}
\begin{lemma}\label{LocPITHM} If the original density satisfies \cref{ass:A0}, then the Markov triple $(\mb{R}^d,\pi_h, \Gamma_h)$ satisfies a local Poincar\'{e} inequality on $K_{r,d}$ defined in~\eqref{eq:temp01}.
\end{lemma}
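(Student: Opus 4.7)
The plan is to reduce the local Poincar\'e inequality for $\pi_h$ on the Euclidean ball $K_{r,d}=\{x\in\mathbb{R}^d : |x|\le 8(d-1)/r\}$ to the classical Neumann--Poincar\'e inequality for the Lebesgue measure on that ball, using the fact that $\pi_h$ is comparable to Lebesgue on $K_{r,d}$.

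First, I would record the regularity needed. By \cref{ass:A0} the potential $f$ is $\mc{C}^2$, and by \cref{EvalueC2} the transformed potential $f_h$ belongs to $\mc{C}^2(\mb{R}^d)$. In particular $f_h$ is continuous on the compact set $K_{r,d}$, so that
\begin{align*}
    m_{r,d} := \inf_{x\in K_{r,d}} \pi_h(x) > 0, \qquad M_{r,d} := \sup_{x\in K_{r,d}} \pi_h(x) < \infty.
\end{align*}
Thus $\pi_h$ restricted to $K_{r,d}$ is sandwiched between two positive multiples of Lebesgue measure on $K_{r,d}$.

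Second, I would invoke the classical Poincar\'e inequality for the uniform measure on the Euclidean ball $K_{r,d}\subset\mb{R}^d$: there is a constant $C_0=C_0(r,d)$ (proportional to the squared radius, i.e.\ $C_0\lesssim (d-1)^2/r^2$ by the Neumann eigenvalue estimate) such that for every $\phi\in H^1(K_{r,d})$,
\begin{align*}
    \int_{K_{r,d}} \Big(\phi - \tfrac{1}{|K_{r,d}|}\!\int_{K_{r,d}}\phi\, dx\Big)^2 dx
    \;\le\; C_0 \int_{K_{r,d}} |\nabla \phi(x)|^2 dx.
\end{align*}

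Finally, I would transfer this to $\pi_h$ by a bounded perturbation argument of Holley--Stroock type. For $\phi\in\mc{D}(\mc{E})$ and any constant $c$, using $m_{r,d}\le \pi_h\le M_{r,d}$ on $K_{r,d}$ and replacing $c$ by the $\pi_h$-mean $m_K(\phi) = \pi_h(K_{r,d})^{-1}\!\int_{K_{r,d}}\phi\,d\pi_h$ (which minimizes $\int_{K_{r,d}}(\phi-c)^2 d\pi_h$), gives
\begin{align*}
    \int_{K_{r,d}} (\phi-m_K(\phi))^2 d\pi_h
    &\le M_{r,d}\int_{K_{r,d}}(\phi-\bar\phi)^2 dx \\
    &\le M_{r,d}\, C_0 \int_{K_{r,d}} |\nabla\phi(x)|^2 dx
    \;\le\; \frac{M_{r,d}}{m_{r,d}}\, C_0 \int_{K_{r,d}} \Gamma_h(\phi)\, d\pi_h,
\end{align*}
where $\bar\phi$ denotes the Lebesgue average of $\phi$ on $K_{r,d}$ and $\Gamma_h(\phi)=|\nabla\phi|^2$ is the carr\'e du champ of the transformed diffusion \eqref{TLanD}. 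This yields a local Poincar\'e constant
\begin{align*}
    C_{K_{r,d}} \;\le\; \frac{M_{r,d}}{m_{r,d}}\, C_0(r,d),
\end{align*}
which is finite and proves the claim.

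The only non-routine point is the quantitative control of $m_{r,d}$ and $M_{r,d}$; qualitatively, compactness plus continuity of $f_h$ (from \cref{EvalueC2}) is enough to conclude. Making the dependence on $d$ and $r$ explicit would require tracking the oscillation of $f_h$ on $K_{r,d}$, but for the statement of \cref{LocPITHM} — existence of a finite constant — the argument above suffices.
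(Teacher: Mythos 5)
Your proof is correct and follows essentially the same route as the paper: both reduce the local Poincar\'e inequality for $\pi_h$ on $K_{r,d}$ to the classical Neumann--Poincar\'e inequality for Lebesgue measure on the ball, then transfer it via the two-sided bound on $e^{-f_h}$ over the compact set (you phrase the centering step through mean-minimization of the quadratic, the paper through a WLOG normalization $\phi_{K_{r,d}}=0$; these are equivalent). One small remark: you correctly record the classical constant as scaling like the squared radius, $C_0\lesssim (d-1)^2/r^2$, whereas the paper writes $C(d-1)/r$ -- the quadratic scaling is the dimensionally correct one, so the paper's expression should be read as $C\bigl((d-1)/r\bigr)^2$ with a universal $C$.
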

\begin{proof}[Proof of \cref{LocPITHM}]\label{PfLocPITHM}
According to the classical Poincar\'{e} inequality with respect to Lebesgue measure, there is a universal constant $C>0$ such that for all $u\in W^{1,2}(\mb{R}^d)\subset W^{1,2}(K_{r,d})$:
\begin{align*}
    \int_{K_{r,d}} (u(x)-u_{K_{r,d}})^2 dx\le C \frac{d-1}{r} \int_{K_{r,d}} |\nabla u|^2 dx
\end{align*}
where $u_{K_{r,d}}=\int_{K_{r,d}} u(x)\ dx$. To prove the local Poincar\'{e} inequality for $(\mb{R}^d,\pi_h, \Gamma_h)$, without loss of generality, we assume that $\phi\in \mc{D}(\mc{E})$ and $\phi_{K_{r,d}}=\int_{K_{r,d}} \phi(x)\ dx=0$. Then 
\begin{align*}
    \int_{K_{r,d}} (\phi-m_{K_{r,d}})^2 e^{-f_h(x)} dx&=\int_{K_{r,d}} \phi(x)^2 e^{-f_h(x)} dx-(\int_{K_{r,d}} \phi(x)e^{-f_h(x)}dx)^2/|K_{r,d}| \\
    &\le \left( \sup_{x\in K_{r,d}} e^{-f_h(x)} \right)\int_{K_{r,d}} \phi(x)^2 dx\\
    &=  \left( \sup_{x\in K_{r,d}} e^{-f_h(x)} \right)\int_{K_{r,d}} (\phi-\phi_{K_{r,d}})^2 dx\\
    &\le C\frac{d-1}{r}\left ( \sup_{x\in K_{r,d}} e^{-f_h(x)} \right)\int_{K_{r,d}}|\nabla \phi(x)|^2 dx  \\
    &\le C\frac{d-1}{r}\left ( \sup_{x\in K_{r,d}} e^{-f_h(x)} \right)
    \left ( \sup_{x\in K_{r,d}} e^{f_h(x)} \right) \int_{K_{r,d}} \Gamma_h(\phi) e^{-f_h(x)} dx.
\end{align*}
Therefore the Markov triple $(\mb{R}^d,\pi_h, \Gamma_h)$ satisfies a local Poincar\'{e} inequality on $K_{r,d}$ with constant $C_{r,d}=(C(d-1)/r) \left ( \sup_{x\in K_{r,d}} e^{-f_h(x)} \right)
    \left ( \sup_{x\in K_{r,d}} e^{f_h(x)} \right)$.
\end{proof}
\begin{lemma}\label{TranPITHM} If the infinitesimal generator $\mc{L}_h$ satisfies the dissipativity condition with constant $r>0$ and the original density $f$ satisfies \cref{ass:A0}, then the Markov triple $(\mb{R}^d,\pi_h, \Gamma_h)$ also satisfies a Poincar\'{e} inequality. 
\end{lemma}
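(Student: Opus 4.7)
The plan is to combine the Lyapunov condition from~\cref{DisToLya} with the local Poincar\'{e} inequality from~\cref{LocPITHM} via the classical argument of Bakry--Barthe--Cattiaux--Guillin, which converts these two ingredients into a global Poincar\'{e} inequality. First, I invoke~\cref{DisToLya} to obtain a smooth Lyapunov function $V\ge 1$ with $\mc{L}_h V \le -\lambda_r V + \lambda_r c_{r,d} V \mathbf{1}_{K_{r,d}}$, so that $-\mc{L}_h V / V \ge \lambda_r - \lambda_r c_{r,d} \mathbf{1}_{K_{r,d}}$. Separately,~\cref{LocPITHM} supplies a local Poincar\'{e} constant $C_{r,d}$ for $\pi_h$ on the compact set $K_{r,d}$.

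The key analytic step is the standard integration-by-parts inequality
\begin{align*}
\int_{\mb{R}^d} \frac{-\mc{L}_h V}{V}\,\phi^2\, d\pi_h \;\le\; \int_{\mb{R}^d} \Gamma_h(\phi)\, d\pi_h,
\end{align*}
valid for all sufficiently regular $\phi\in\mc{D}(\mc{E})$. This can be derived from the chain rule for the diffusion generator applied to the product $V\cdot(\phi^2/V)$, combined with the fact that $\mc{E}_h(V,\phi^2/V) = -\int (\mc{L}_h V)(\phi^2/V)\, d\pi_h$; positivity of the carr\'{e} du champ on $\phi$ yields the stated bound. Substituting the lower bound on $-\mc{L}_h V/V$ then gives
\begin{align*}
\lambda_r \int \phi^2\, d\pi_h \;\le\; \int \Gamma_h(\phi)\, d\pi_h + \lambda_r c_{r,d}\int_{K_{r,d}} \phi^2\, d\pi_h.
\end{align*}

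To close the argument, I apply this bound not to $\phi$ directly but to $\psi := \phi - m_{K_{r,d}}$, where $m_{K_{r,d}}$ denotes the local average from~\cref{LocPI}. Since $\Gamma_h(\psi)=\Gamma_h(\phi)$ and $\mathrm{Var}_{\pi_h}(\phi)\le \int \psi^2\, d\pi_h$, combining with~\cref{LocPITHM} gives
\begin{align*}
\mathrm{Var}_{\pi_h}(\phi) \;\le\; \frac{1}{\lambda_r}\int \Gamma_h(\phi)\, d\pi_h + \frac{c_{r,d}}{\lambda_r}\int_{K_{r,d}} (\phi-m_{K_{r,d}})^2\, d\pi_h \;\le\; \frac{1+c_{r,d} C_{r,d}}{\lambda_r}\,\mc{E}_h(\phi),
\end{align*}
which is the desired Poincar\'{e} inequality with constant $\chpoincare := (1+c_{r,d}C_{r,d})/\lambda_r$. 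The main technical obstacle is justifying the integration-by-parts inequality for $V$, whose exponential growth at infinity means $V\notin L^2(\pi_h)$ in general; the standard remedy is to first establish the inequality for compactly supported smooth test functions $\phi$, where all manipulations are valid, and then extend to $\phi\in\mc{D}(\mc{E})$ by truncation and a density argument using $V\ge 1$. Tracking this extension (and noting that the dissipativity threshold $M=8(d-1)/r$ already sits inside $K_{r,d}$) delivers the constant $\chpoincare$ claimed in~\cref{PITLMC}.
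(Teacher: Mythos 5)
Your proposal follows the paper's proof essentially line for line: invoke \cref{DisToLya} for the Lyapunov condition, invoke \cref{LocPITHM} for the local Poincar\'e inequality on $K_{r,d}$, apply the standard Lyapunov--Dirichlet-form inequality $-\int(\mc{L}_h V/V)\psi^2\,d\pi_h\le\int\Gamma_h(\psi)\,d\pi_h$ (obtained by integration by parts plus the completion-of-squares bound on $\Gamma_h(\psi^2/V,V)$), and center $\phi$ at the local mean $m_{K_{r,d}}$ so the restriction to $K_{r,d}$ can be absorbed. This is exactly the argument in the paper. One small bookkeeping slip: after dividing the inequality $\lambda_r\int\psi^2\,d\pi_h\le\int\Gamma_h(\psi)\,d\pi_h+\lambda_r c_{r,d}\int_{K_{r,d}}\psi^2\,d\pi_h$ by $\lambda_r$, the coefficient on the restricted term is $c_{r,d}$, not $c_{r,d}/\lambda_r$; the resulting Poincar\'e constant should therefore read $\chpoincare=\tfrac{1}{\lambda_r}+c_{r,d}C_{r,d}=\tfrac{1+\lambda_r c_{r,d}C_{r,d}}{\lambda_r}=\tfrac{8}{r^2}\bigl(1+A_{r,d}C_{r,d}\bigr)$ as in the paper, rather than $\tfrac{1+c_{r,d}C_{r,d}}{\lambda_r}$.
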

\begin{proof}[Proof of \cref{TranPITHM}]\label{PfTranPITHM} 
According to \cref{DisToLya}, $\mc{L}_h$ satisfies Lyapunov condition with $\lambda=\lambda_r=r^2/8$, $K$ as in~\eqref{eq:temp01} and $c=c_{r,d}=A_{r,d}/\lambda_r$. Therefore for all $m\in \mb{R}$ and $\phi\in \mc{D}(\mc{E}_h)$:
\begin{align}\label{Eq1}
    \int_{\mb{R}^d} (\phi-m)^2 e^{-f_h(x)}dx &\le \int_{\mb{R}^d} (-\frac{8\mc{L}_h V}{r^2 V}+\frac{8A_{r,d}}{r^2}1_{K_{r,d}})(\phi-m)^2 e^{-f_h(x)} dx \nonumber \\
    &=-\frac{8}{r^2} \int_{\mb{R}^d} \frac{\mc{L}_h V}{V}(\phi-m)^2 e^{-f_h(x)}dx +\frac{8A_{r,d}}{r^2}\int_{K_{r,d}} (\phi-m)^2 e^{-f_h(x)}dx.
\end{align}
Choosing $m=\int_{K_{r,d}}\phi\ e^{-f_h(x)}dx / \int_{K_{r,d}} e^{-f_h(x)} dx$, the second term in \eqref{Eq1} can be bounded as a result of \cref{LocPITHM}:
\begin{align}\label{TERM2BD}
    \frac{8A_{r,d}}{r^2}\int_{K_{r,d}} (\phi-m)^2 e^{-f_h(x)}dx&\le \frac{8A_{r,d}}{r^2} C_{r,d} \int_{K_{r,d}} \Gamma_h(\phi) e^{-f_h(x)}dx.
\end{align}
The first term in \eqref{Eq1} can be bounded by integration by parts, and the diffusion property of $\Gamma_h$ as
\begin{align}\label{TERM1BD}
    -\frac{8}{r^2} \int_{\mb{R}^d} \frac{\mc{L}_h V}{V}(\phi-m)^2 e^{-f_h(x)}dx &= \frac{8}{r^2}  \int_{\mb{R}^d} \Gamma_h(\frac{(\phi-m)^2}{V},V) e^{-f_h(x)}dx   \nonumber \\
    &= \frac{8}{r^2}\int_{\mb{R}^d} \left( \frac{2(\phi-m)}{V}\Gamma_h(\phi-m,V)-\frac{(\phi-m)^2}{V^2}\Gamma_h(V) \right)e^{-f_h(x)}dx \nonumber \\
    &\le \frac{8}{r^2}\int_{\mb{R}^d} \Gamma_h(\phi-m) e^{-f_h(x)}dx \nonumber \\
    &= \frac{8}{r^2}\int_{\mb{R}^d} \Gamma_h(\phi) e^{-f_h(x)}dx .
\end{align}
Combining the results in \eqref{TERM2BD} and \eqref{TERM1BD}, we prove the Markov triple $(\mb{R}^d,\pi_h, \Gamma_h)$ satisfies a Poincar\'{e} inequality with constant $C$ defined as 
\begin{align*}
    C&=\frac{8}{r^2}(1+A_{r,d}C_{r,d})\qquad \text{with} \\
    A_{r,d}&=\max_{4(d-1)/r\le |x|\le 8(d-1)/r} \left( -r P'(|x|)+\Laplace (P(|x|))\right)\vee 0, \\
    C_{r,d}&=\frac{C(d-1)}{r} \left ( \sup_{x\in K_{r,d}} e^{-f_h(x)} \right)
    \left ( \sup_{x\in K_{r,d}} e^{f_h(x)} \right).
\end{align*}
\end{proof}
\begin{proof}[Proof of \cref{PITLMC}]\label{PfPITLMC}
Applying \eqref{GradientTHM} in \cref{TRPotential}, when $|x|\ge 1/b$, we have
\begin{align}
    \langle \nabla f_h(x),x \rangle&=[ f'(e^{b|x|})be^{b|x|}-b-(d-1)\frac{be^{b|x|}}{e^{b|x|}}+\frac{d-1}{|x|} ]|x| \nonumber \\
    &=[ f'(e^{b|x|})be^{b|x|}-bd]|x|+(d-1) \nonumber \\
    &\ge A|x|-B, \label{Inner}
\end{align}
where the last inequality follows from \cref{ass:A3} with $\alpha=1$ and $\beta=1$. We now make the following claim. \\
\par \textbf{Claim:} The infinitesimal generator $\mc{L}_h$ satisfies the dissipativity condition with the constants 
\begin{align}\label{DissiR}
    r&=\frac{-8(d-1)+\sqrt{64(d-1)^2+32AB(d-1)}}{2B}\in (0,A) ,\\
    M&=\frac{8(d-1)}{r} . \nonumber
\end{align}
If the above \textbf{Claim} holds, \cref{PITLMC} follows from \cref{TranPITHM} and Theorem 3 in \cite{vempala2019rapid}. Furthermore, $C_h$ in the statement is given by 
\begin{align*}
    C_h&=\frac{r^2}{8}(1+A_{r,d}C_{r,d})^{-1}\qquad \text{with} \\
    A_{r,d}&=\max_{4(d-1)/r\le |x|\le 8(d-1)/r} \left( -r P'(|x|)+\Laplace (P(|x|))\right)\vee 0, \\
    C_{r,d}&=\frac{C(d-1)}{r^2} \left ( \sup_{x\in K_{r,d}} e^{-f_h(x)} \right)
    \left ( \sup_{x\in K_{r,d}} e^{f_h(x)} \right).
\end{align*}
with $r$ defined in \eqref{DissiR} and $C$ is a universal constant. We now prove the claim
\par \textbf{Proof of the Claim:}  To prove the \textbf{Claim}, it suffices to show for all $|x|\ge \max\{N_1, 8(d-1)/r\}$, we have $\langle \nabla f_h(x),x \rangle\ge r|x|$. Based on \eqref{Inner}, it further suffices to guarantee
\begin{align*}
    A|x|-B\ge r|x| \qquad \text{and} \qquad \frac{8(d-1)}{r}\ge \frac{1}{b}.
\end{align*}
When $r=A/2, 8(d-1)/r$, with $N_1>3B/A$, the above conditions are easily satisfied, therby completing the proof.
\end{proof}
\subsection{Proof of \cref{LSILMC} and \cref{LSITULA}}
\cref{LSILMC} and \cref{LSITULA} are both built on the intermediate result that the transformed measure $\pi_h$ satisfies a LSI. The proof would rely on the following Holley-Stroock theorem.
\begin{theorem}[Holley-Stroock Theorem~\cite{holley1986logarithmic}]\label{HS} Let $\mu\sim LS(C_{\mu})$ and let $\mu_F=Z_F^{-1}e^{-F}\mu$. If $F$ is bounded, then $\mu_F\sim LS(C_{\mu_F})$ and $C_{\mu_F}\le e^{Osc F}C_{\mu}$ where $OscF:=\sup_{x\in \mb{R}^d} F(x)-\inf_{x\in \mb{R}^d} F(x)$.
\end{theorem}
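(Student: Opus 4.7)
The plan is to establish this classical perturbation bound through the variational characterization of the relative entropy. First, I would recall that for any non-negative measurable function $\phi$ on a probability space $(\mb{R}^d,\nu)$, one has
\[
  \text{Ent}_\nu(\phi) \;=\; \inf_{t>0}\int_{\mb{R}^d}\bigl[\phi\log(\phi/t)-\phi+t\bigr]\,d\nu,
\]
and the integrand $\phi\log(\phi/t)-\phi+t$ is pointwise nonnegative for $\phi,t\ge 0$ (by convexity of $u\mapsto u\log u$). Applied to $\phi=f^2$ under $\nu=\mu_F$, this representation behaves monotonically under pointwise comparison of the densities $d\mu_F/d\mu=e^{-F}/Z_F$, which is the crucial structural feature I would exploit.

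Second, I would use the pointwise upper bound $e^{-F}\le e^{-\inf F}$ together with the nonnegativity of the integrand to transfer the entropy from $\mu_F$ to $\mu$:
\[
  \text{Ent}_{\mu_F}(f^2) \;\le\; \frac{e^{-\inf F}}{Z_F}\,\text{Ent}_\mu(f^2).
\]
Since $\mu\sim LS(C_\mu)$, the right-hand side is bounded by $\frac{e^{-\inf F}}{Z_F}\cdot 2C_\mu\int|\nabla f|^2\,d\mu$. In the reverse direction, I would use $e^{-F}\ge e^{-\sup F}$ to push the Dirichlet form back under $\mu_F$:
\[
  \int_{\mb{R}^d}|\nabla f|^2\,d\mu \;\le\; e^{\sup F}\,Z_F\int_{\mb{R}^d}|\nabla f|^2\,d\mu_F.
\]
Combining these two steps, the $Z_F$ factors cancel exactly, and the surviving prefactor is $e^{\sup F-\inf F}=e^{\text{Osc}\,F}$, giving the LSI for $\mu_F$ with constant $C_{\mu_F}\le e^{\text{Osc}\,F}C_\mu$.

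The proof contains no serious obstacle as it reduces to a careful change of measure, but the subtle point I would be careful about is the reliance on pointwise nonnegativity of the integrand in the variational formula: this is precisely what allows the one-sided bounds $e^{-F}\le e^{-\inf F}$ and $e^{-F}\ge e^{-\sup F}$ to be applied in opposite directions without cancellation issues. The boundedness hypothesis on $F$ enters only through $\text{Osc}\,F<\infty$, which is what makes the resulting LSI constant finite. No further smoothness or tail assumption on $F$ is needed, which is precisely why the theorem is so useful as a perturbation device in the applications to $f_h$ appearing in the main text.
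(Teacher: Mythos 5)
Your proof is correct and is precisely the standard argument due to Holley and Stroock. The paper itself does not prove this theorem---it simply cites \cite{holley1986logarithmic} and uses the statement as a black box in the proof of \cref{TranLSI}---so there is no in-paper proof to compare against. Your use of the variational representation $\text{Ent}_\nu(\phi)=\inf_{t>0}\int[\phi\log(\phi/t)-\phi+t]\,d\nu$ with a pointwise nonnegative integrand, the one-sided change-of-measure bounds $e^{-\inf F}\ge e^{-F}\ge e^{-\sup F}$ on the entropy and Dirichlet form respectively, and the exact cancellation of the normalizing constant $Z_F$ is exactly the textbook route (see, e.g., \cite[Chapter 5]{bakry2014analysis}), and it correctly accounts for the factor $2$ in the paper's normalization of the LSI. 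The only remark worth making explicit is that for the infimum bound to pass through cleanly, one must note that the infimum over $t$ of an inequality of the form $A(t)\le cB(t)$ (with $c>0$ a constant independent of $t$) yields $\inf_t A(t)\le c\inf_t B(t)$; you implicitly use this, and it is valid.
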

\noindent As an immediate corollary of \cref{HS}, we have $C_{\mu_F}\ge e^{-Osc F}C_{\mu}$. 
\begin{lemma}\label{TranLSI} If the true target density $\pi$ satisfies \cref{ass:A0} and \cref{ass:A1}, then the transformed density $\pi_h$ satisfies a LSI. 
\end{lemma}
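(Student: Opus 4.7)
The plan is to combine the Bakry-Émery criterion with the Holley-Stroock perturbation principle (\cref{HS}). First, I would use \cref{TRPotential} to write the two radial eigenvalues of $\nabla^2 f_h$ in closed form; for $|x|\ge b^{-1/\beta}$ these eigenvalues are \emph{exactly} the quantities lower bounded by $\rho$ in \cref{ass:A1}. Hence, after enlarging $N_3$ so that $N_3\ge b^{-1/\beta}$, we obtain $\nabla^2 f_h(x)\succeq \rho\, I_d$ for all $|x|>N_3$; in words, $f_h$ is strongly convex at infinity with parameter $\rho$.

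Next, I would construct a globally $\rho$-strongly convex auxiliary potential $\tilde f_h\in\mc{C}^2(\mb{R}^d)$ that agrees with $f_h$ on $\{|x|\ge N_3+1\}$. Because $f_h$ is radial (\cref{ass:A0}) and $\mc{C}^2$ (by \cref{EvalueC2}), this reduces to a one-dimensional $\mc{C}^2$ gluing problem on $[0,N_3+1]$: replace the profile $r\mapsto f_h(r)$ on the interior by a convex function that matches $f_h$ and its first and second radial derivatives at $r=N_3+1$, and whose second derivative is everywhere at least $\rho$. A concrete realization sets $\tilde f_h(x)=\tfrac{\rho}{2}|x|^2+c$ on a slightly smaller ball and interpolates on the annulus $\{N_3\le|x|\le N_3+1\}$ via a $\mc{C}^2$ spline augmented by an additional $\tfrac{\rho}{2}|x|^2$ term large enough to absorb any loss of convexity. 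The perturbation $F:=\tilde f_h-f_h$ is then supported in $\overline{B(0,N_3+1)}$, and since both $f_h$ and $\tilde f_h$ are continuous on this compact set, $\|F\|_\infty<\infty$.

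With $\tilde f_h$ globally $\rho$-strongly convex, the Bakry-Émery criterion (see, e.g., \cite[Chapter 5]{bakry2014analysis}) yields that $\tilde\pi_h\propto e^{-\tilde f_h}$ satisfies \eqref{eq:LSI} with constant $1/\rho$. Since $\pi_h\propto e^{-f_h}=e^{-\tilde f_h}\cdot e^{F}$ with $F$ bounded, \cref{HS} then gives $\pi_h\sim LS(\chlsi)$ with $\chlsi\le e^{\mathrm{Osc}\,F}/\rho$.

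The main obstacle is the $\mc{C}^2$ interpolation step: gluing a $\rho$-strongly convex interior to the already-strongly-convex tail while preserving both regularity and the Hessian lower bound across the transition annulus. Isotropy reduces this to a scalar interpolation with three prescribed boundary values and a pointwise lower bound on the second derivative, which is achievable but requires the auxiliary quadratic cushion described above. A secondary, quantitative concern is tracking the dependence of $\mathrm{Osc}\,F$ on $\rho$, the parameters in \cref{ass:A0}, and the dimension $d$; this dependence is what ultimately enters the explicit form of $\chlsi$ used in \cref{LSILMC} and \cref{LSITULA}, and would need to be computed carefully if a quantitative LSI constant is desired.
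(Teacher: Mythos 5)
Your proposal takes essentially the same route as the paper: both observe that \cref{ass:A1} forces $\nabla^2 f_h \succeq \rho I_d$ outside a compact ball (via the eigenvalue formulas in \cref{TRPotential}), construct a globally $\rho$-strongly convex $\mathcal{C}^2$ potential $\tilde f_h$ that agrees with $f_h$ at infinity, and conclude $\pi_h \sim LS(\chlsi)$ by applying the Bakry--\'Emery criterion to $e^{-\tilde f_h}$ and then Holley--Stroock (\cref{HS}) to the bounded, compactly supported perturbation $f_h - \tilde f_h$. The paper likewise asserts, without explicit construction, the existence of a $\mathcal{C}^2$ gluing function $g_h$ on $\{|x|\le\tilde N_1\}$ with $\nabla^2 g_h \succeq \rho I_d$, so the interpolation step you flag as the main obstacle is one the paper also leaves implicit; the only cosmetic difference is your Bakry--\'Emery constant $1/\rho$ versus the paper's $2/\rho$, a convention-level discrepancy.
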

\begin{proof}[Proof of \cref{TranLSI}]\label{PFTranLSI} Based on \eqref{EO1} and $\eqref{EO2}$ in \cref{AnalyTran}, when $|x|\ge b^{-\frac{1}{\beta}}$, we have
\begin{align}
    \lambda_1&=f''(e^{b|x|^\beta})b^2\beta^2|x|^{2(\beta-1)}e^{2b|x|^\beta}+f'(e^{b|x|^\beta})(\beta(\beta-1)b|x|^{\beta-2}+\beta^2b^2|x|^{2(\beta-1)})e^{b|x|^\beta} \nonumber\\
       & \quad -\beta(\beta-1)b|x|^{\beta-2}-(d-\beta)|x|^{-2} \nonumber   \\
    &=f''(\psi(|x|))\psi'(|x|)^2+f'(\psi(|x|))\psi''(|x|)-\beta(\beta-1)b|x|^{\beta-2}-(d-\beta)|x|^{-2}\label{GE1Tra1} \\
    \lambda_2&=f'(e^{b|x|^\beta}+C_\beta)b\beta|x|^{\beta-1}e^{b|x|^\beta}|x|^{-1}-b\beta d|x|^{\beta-2}+(d-\beta)|x|^{-2} \nonumber\\
    &=f'(\psi(|x|))\psi'(|x|)|x|^{-1}-b \beta d|x|^{\beta-2}+(d-\beta)|x|^{-2} \label{GE2Tra1}.
\end{align}
where $\psi(r)=e^{br^\beta}$ for all $r\ge b^{-\frac{1}{\beta}}$.
If $f$ satisfies \cref{ass:A1}, then for all $|x|\ge \Tilde{N}_1:= \max\{N_3,b^{-\frac{1}{\beta}}\}$: $\lambda_1(|x|)\ge \rho$ for $i=1,2$.
We can then construct two potentials:
    \begin{equation*}
    \Tilde{f}_h(x)=\left\{
    \begin{aligned}
    & f_h(x) \ \ \ \ \ \ &|x|>\Tilde{N}_1,\\
    & g_h(x) & |x|\le \Tilde{N}_1,\\
    \end{aligned}
    \right.
    \ \ \ \ \ \ \ 
    \Bar{f}_h(x)=\left\{
    \begin{aligned}
    & 0 \ \ \ \ \ \ \ \ \ &|x|>\Tilde{N}_1,\\
    & f_h(x)-g_h(x) & |x|\le \Tilde{N}_1 .
    \end{aligned}
    \right.
    \end{equation*}
where $g_h :\{|x|\le \Tilde{N}_1\}\subset \mb{R}^d \to \mb{R}$ is chosen such that $\Tilde{f}_h \in \mc{C}^2(\mb{R}^d)$ and $\nabla^2 g_h(x)\succeq \rho I_d$ for all $|x|\le \Tilde{N}_1$. Therefore, $\nabla^2 \Tilde{f}_h(x)\succeq \rho I_d$ for all $x\in \mb{R}^d$ i.e $\Tilde{f}_h$ is $\rho$-strongly convex which implies that the measure $\exp(-\Tilde{f_h}(x))dx \sim LS(2/\rho)$(see \cite{bakry1985diffusions}). Meanwhile, $\Bar{f}_h$ is compactly supported on $\{|x|\le \Tilde{N}_1\}$ and $f_h,g_h\in \mc{C}^2(\mb{R}^d)$, which implies that $\Bar{f}_h$ is bounded, i.e $Osc \Bar{f}_h<\infty$ . Last according to the Holley-Stroock theorem and the fact that $\pi_h \propto\exp(-f_h)= \exp(-\Tilde{f}_h)\exp(-\Bar{f}_h)$, 
\begin{align}
    \pi_h\sim LS(\chlsi) \quad\text{with}\quad \chlsi=2e^{Osc \Bar{f_h}}/\rho. \label{LSIConst}
\end{align}
\end{proof}
\begin{proof}[Proof of \cref{LSILMC}]\label{PFLSILMC} 
 The two inequalities in \cref{LSILMC} follows from \cref{TranLSI} and theorem 4 in \cite{vempala2019rapid}. The constant $\chlsi$ in \cref{LSILMC} is the same $\chlsi$ in \eqref{LSIConst}.
\end{proof}
\begin{lemma}\label{GLcond} If the potential function $f$ satisfies \cref{ass:A2}, then the transformed potential $f_{h}(x)$ satisfies the gradient Lipschitz condition, i.e. there exists $L_{h}>0$ such that for all $x,y\in \mb{R}^d$, we have $|\nabla f_{h}(x)-\nabla f_{h}(y)|\le L_{h}|x-y|$.
\end{lemma}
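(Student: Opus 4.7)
The plan is to show that the operator norm $\|\nabla^2 f_h(x)\|$ is uniformly bounded on $\mathbb{R}^d$, and then invoke the mean value theorem to deduce the Lipschitz gradient bound $|\nabla f_h(x)-\nabla f_h(y)|\le L_h|x-y|$. Because $f$, and hence $f_h$, is isotropic, I can exploit the explicit eigenvalue formulas already at my disposal from \cref{TRPotential}: $\nabla^2 f_h(x)$ has precisely two eigenvalues $\lambda_1(|x|)$ and $\lambda_2(|x|)$, given in the inner regime $|x|<b^{-1/\beta}$ by \eqref{EI1}--\eqref{EI2} and in the tail regime $|x|\ge b^{-1/\beta}$ by \eqref{EO1}--\eqref{EO2}. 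Controlling $\lambda_1$ and $\lambda_2$ directly yields a bound on $\|\nabla^2 f_h(x)\|$.

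For the tail region $|x|>R:=\max\{N_4,b^{-1/\beta}\}$, the expressions \eqref{EO1} and \eqref{EO2} are precisely the left-hand sides appearing in \cref{ass:A2}. Thus \cref{ass:A2} immediately yields $\lambda_i(|x|)\le L$ for $i=1,2$ in this region. A corresponding lower bound comes from observing that in the tail the residual terms $-b\beta d|x|^{\beta-2}$, $-\beta(\beta-1)bd|x|^{\beta-2}$ and $\pm(d-\beta)|x|^{-2}$ are all bounded in absolute value, while the remaining $f$-dependent pieces are controlled by A0 together with A2 applied to the same expression with reversed sign; together these give $|\lambda_i(|x|)|\le L'$ for some $L'\ge L$.

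For the compact region $\overline{B}_R=\{|x|\le R\}$, I would invoke \cref{EvalueC2}, which states that $f_h\in\mathcal{C}^2(\mathbb{R}^d)$. Continuity of $x\mapsto\|\nabla^2 f_h(x)\|$ on the compact set $\overline{B}_R$ then implies it attains a finite maximum $L_0$ there. Particular care is needed at the origin: the formulas \eqref{EI1}--\eqref{EI2} feature terms like $|x|^{-2}$, $g_{in}''/g_{in}'$ and $g_{in}'/g_{in}$ that look singular, but \cref{ass:G0} was designed precisely so that each offending quantity has a finite limit as $|x|\to 0$, so $\lambda_1,\lambda_2$ extend continuously to the origin and $L_0<\infty$.

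Setting $L_h:=\max\{L,L',L_0\}$, I obtain $\|\nabla^2 f_h(x)\|\le L_h$ for every $x\in\mathbb{R}^d$, from which the Lipschitz bound on $\nabla f_h$ follows by a one-line mean value theorem argument along the segment from $x$ to $y$. I expect the main obstacle to be bookkeeping at the interface $|x|=b^{-1/\beta}$ and at the origin, where the piecewise structure of $g$ and the limit conditions of \cref{ass:G0} must be used carefully to verify continuity of the Hessian; the tail estimate itself is essentially immediate from \cref{ass:A2}, modulo the mild extension to a two-sided bound described above.
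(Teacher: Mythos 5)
Your decomposition into the tail region (controlled by \cref{ass:A2}) and the compact ball (controlled by continuity via \cref{EvalueC2}), together with the appeal to the explicit eigenvalue formulas from \cref{TRPotential}, is exactly the paper's argument. You also flag a subtlety that the paper glosses over: the paper's proof opens with ``it suffices to prove $\nabla^2 f_h(x)\preceq L_h I_d$,'' but that one-sided bound does \emph{not} by itself imply the stated Lipschitz conclusion. One needs $\|\nabla^2 f_h(x)\|\le L_h$ uniformly, i.e.\ control of both $\lambda_{\max}$ and $\lambda_{\min}$; the paper's final $L_h=\max\{L,\max_{|x|\le\tilde N_2}\|\nabla^2 f_h(x)\|\}$ takes the operator norm only on the compact region and leaves the tail with merely the upper bound from \cref{ass:A2}.

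However, the lower bound you supply to repair this does not hold up. You assert that the $f$-dependent pieces are ``controlled by A0 together with A2 applied to the same expression with reversed sign,'' but \cref{ass:A2} is strictly one-sided: it imposes an upper bound $<L$ on the two displayed quantities and says nothing about how negative they can be, and there is no ``reversed-sign'' version of it to invoke (\cref{ass:A0} contributes only isotropy and $\mathcal{C}^2$ regularity, which is silent on tail behavior). Your observation that the residual polynomial terms $b\beta d|x|^{\beta-2}$, $b\beta(\beta-1)d|x|^{\beta-2}$, $(d-\beta)|x|^{-2}$ are bounded for $|x|\ge b^{-1/\beta}$ is correct, but under \cref{ass:A0} and \cref{ass:A2} alone the combinations $f''(\psi)\psi'^2+f'(\psi)\psi''$ and $f'(\psi)\psi'|x|^{-1}$ can in principle be arbitrarily negative as $|x|\to\infty$, so the eigenvalues need not be bounded below. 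To close the gap honestly one needs a companion hypothesis such as \cref{ass:A1} or \cref{ass:A5}, which force the tail eigenvalues to be nonnegative (indeed bounded below by a positive constant) — and such an assumption does accompany \cref{ass:A2} every time the lemma is actually invoked in \cref{LSITULA} and \cref{thm:mlsi and tail ass}. In short: you reproduce the paper's proof and correctly spot a real gap in it, but the fix you propose is not a valid argument and the gap remains open in your write-up.
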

\begin{proof}[Proof of \cref{GLcond}:] It suffices to prove that there is a constant $L_{h}$ such that $\nabla^2 f_{h}(x) \preceq L_{h} I_d$ for all $x\in \mb{R}^d$, i.e $\lambda_1(|x|),\lambda_2(|x|) \le L_{h}$ for all $x\in\mb{R}^d$. Based on \eqref{GE1Tra1},\eqref{GE2Tra1} in the proof of \cref{TranLSI}, and the fact that $f$ satisfies \cref{ass:A2}, we have when $|x|\ge \Tilde{N}_2:= \max\{ N_4, b^{-\frac{1}{\beta}}\}$: $\lambda_i(|x|)\le L$ for $i=1,2$.
When $|x|\le \Tilde{N}_2$, since $f_{h} \in \mc{C}^2(\mb{R}^d)$,
\begin{align*}
    \max_{|x|\le \Tilde{N}_2}\lv \nabla^2 f_{h}(x) \rv<\infty.
\end{align*}
Therefore the transformed density $f_h$ is gradient Lipschitz with parameter $L_{h}$ defined by
\begin{align}\label{GLpara}
    L_{h}=\max\{ L, \max_{|x|\le \Tilde{N}_2} \lv \nabla^2 f_h(x) \rv \}.
\end{align}
\end{proof}
\begin{proof}[Proof of \cref{LSITULA}]\label{PFLSITULA}
From \cref{GLcond} we have that the transformed potential $f_h$ has Lipschitz gradients with parameter $L_h=\max\{ L, \max_{|x|\le \Tilde{N}_2} \lv \nabla^2 f_h(x) \rv \}$. Furthermore, as shown in equation \eqref{LSIConst} in the proof of \cref{TranLSI}, $\pi_h \sim LS(\chlsi)$ with $\chlsi=2e^{Osc \hat{f}_h}/\rho$. Hence, we can apply \cite[Theorem 1]{vempala2019rapid} to obtain that when $0<\gamma<\frac{1}{2\chlsi L_h^2}$, 
\begin{align}\label{TULAconv}
    H_{\pi_h}(\rho_n)\le e^{-\frac{2\gamma n}{\chlsi}}H_{\p_h}(\rho_0)+4\chlsi L_h^2\gamma d.
\end{align}
Now, applying \cref{Phithm} with $\Phi(x)=x\log x$ to \eqref{TULAconv}, we get
\begin{align}\label{Sampleconv}
    H_{\pi}(\nu_n)\le e^{-\frac{2\gamma n}{\chlsi}}H_{\pi}(\nu_0)+4\chlsi L_h^2\gamma d.
\end{align}
The mixing time estimate in the theorem instantly follows from equation \eqref{Sampleconv}.
\end{proof}


\subsection{Proof of \cref{mLSILMC} and \cref{thm:mlsi and tail ass}}

In this section we will prove \cref{mLSILMC}
and \cref{thm:mlsi and tail ass}. First we introduce a result which explains the relation between \cref{ass:A5} and \cref{ass:Degenerate convexity at infinity}.
\begin{lemma}\label{5toDGCon} If a potential function $f$ satisfies \cref{ass:A5}, then the transformed potential $f_{h}$ satisfies \cref{ass:Degenerate convexity at infinity}.
\end{lemma}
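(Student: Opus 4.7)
The plan is to mirror the structure of the proof of \cref{TranLSI}, replacing the strong convexity pattern with degenerate convexity. The essential input is the explicit eigenvalue formulas from \cref{TRPotential} combined with the specific form of \cref{ass:A5}.

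The first step is to invoke \cref{TRPotential} to obtain the two eigenvalues $\lambda_1(|x|)$ and $\lambda_2(|x|)$ of $\nabla^2 f_h(x)$ in the outer region $|x|\ge b^{-1/\beta}$, namely \eqref{EO1} and \eqref{EO2}. Written in terms of $\psi(r)=e^{br^\beta}$, these are exactly
\begin{align*}
\lambda_1(|x|) &= f''(\psi(|x|))\psi'(|x|)^2 + f'(\psi(|x|))\psi''(|x|) - b\beta(\beta-1)d|x|^{\beta-2} - (d-\beta)|x|^{-2},\\
\lambda_2(|x|) &= f'(\psi(|x|))\psi'(|x|)|x|^{-1} - b\beta d|x|^{\beta-2} + (d-\beta)|x|^{-2}.
\end{align*}
Thus setting $\tilde N := \max\{N_2, b^{-1/\beta}\}$, the two bounds in \cref{ass:A5} are precisely the statements $\lambda_i(|x|) > \mu/(1+|x|^2/4)^{\theta/2}$ for $i=1,2$ and $|x| > \tilde N$, so that $\nabla^2 f_h(x) \succeq \mu (1+|x|^2/4)^{-\theta/2} I_d$ on that region.

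The second step is to produce the globally defined comparator $\tilde{\phi}$ required by \cref{ass:Degenerate convexity at infinity}. Following the cut-and-paste template from the proof of \cref{TranLSI}, define
\begin{equation*}
\tilde{\phi}(x) =
\begin{cases}
f_h(x), & |x| > \tilde N,\\
g_h(x), & |x| \le \tilde N,
\end{cases}
\end{equation*}
where $g_h$ is a $\mathcal{C}^2$ function on $\{|x|\le \tilde N\}$ matching $f_h$, $\nabla f_h$, and $\nabla^2 f_h$ at $|x|=\tilde N$ (to ensure $\tilde\phi\in \mathcal{C}^2(\mathbb{R}^d)$ via \cref{EvalueC2}) and satisfying $\nabla^2 g_h(x) \succeq \mu(1+|x|^2/4)^{-\theta/2} I_d$ throughout the ball. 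Such a $g_h$ exists: since the target Hessian lower bound is uniformly bounded by $\mu$ on the ball, one can take $g_h(x) = \mu |x|^2 / 2 + p(x)$ where $p$ is a smooth compactly-supported correction fixing the boundary matching data (the quadratic part alone supplies Hessian $\mu I_d$, which dominates $\mu(1+|x|^2/4)^{-\theta/2} I_d$ on the bounded ball).

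The final step is to bound the residual. Put $\bar{f}_h := f_h - \tilde{\phi}$, which vanishes identically for $|x|>\tilde N$ and hence is compactly supported in the ball of radius $\tilde N$. Since $f_h, g_h \in \mathcal{C}^2(\{|x|\le \tilde N\})$, the difference is continuous on a compact set, giving $\|\bar f_h\|_\infty =: \xi_h < \infty$. This is exactly the content of \cref{ass:Degenerate convexity at infinity} with parameters $\mu_h=\mu$ and $\theta_h=\theta$. The only delicate point in the argument is the existence of the extension $g_h$ meeting all three boundary conditions ($C^0$, $C^1$, $C^2$ matching) while preserving the Hessian lower bound; this is routine but notationally heavy, as one must verify that the correction polynomial can be chosen so its Hessian perturbation does not destroy the $\mu$-bound on the ball. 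All other pieces (eigenvalue computation, compactness, Holley–Stroock-style cut-off) are immediate given \cref{TRPotential} and \cref{EvalueC2}.
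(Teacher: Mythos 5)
Your argument follows the paper's proof step for step: invoke the eigenvalue formulas from \cref{TRPotential} in the outer region, observe that \cref{ass:A5} yields $\nabla^2 f_h(x)\succeq \mu(1+|x|^2/4)^{-\theta/2}I_d$ for $|x|\ge\max\{N_2,b^{-1/\beta}\}$, splice in a $\mathcal{C}^2$ extension on the ball with the same Hessian lower bound, and control the residual by compactness. Your extra suggestion about taking $g_h=\mu|x|^2/2+p(x)$ is not obviously consistent with the $C^2$ boundary-matching constraints (a compactly supported $p$ vanishing near $|x|=\tilde N$ can't fix the boundary data, and one supported up to the boundary may have Hessian violating the $\mu$-bound), but the paper glosses over the same existence step, so this does not constitute a gap relative to the paper's own level of rigor.
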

\begin{proof}[Proof of \cref{5toDGCon}:] If a potential function $f$ satisfies \cref{ass:A5} with parameters $\mu,N_2$ and $\theta$, then when $|x|\ge b^{-\frac{1}{\beta}}$, the eigenvalues of $\nabla^2 f_h(x)$ are studied in \eqref{EO1} and \eqref{EO2}. Applying $\psi(|x|)=e^{b|x|^\beta}$, we have the following estimates on the eigenvalues: for all $|x|\ge \Tilde{N}_5:= \max\{ b^{-\frac{1}{\beta}}, N_2\}$ we have
\begin{align}
    \lambda_1&=f''(\psi(|x|))\psi'(|x|)^2+f'(\psi(|x|))\psi''(|x|)-b \beta(\beta-1)d|x|^{\beta-2}-(d-\beta)|x|^{-2} \nonumber\\
    &\ge \frac{\mu}{(1+\frac{1}{4}|x|^2)^{\frac{\theta}{2}}},  \nonumber \\
    \lambda_2&=f'(\psi(|x|))\psi'(|x|)|x|^{-1}-b \beta d|x|^{\beta-2} +(d-\beta)|x|^{-2} \nonumber \\
    &\ge  \frac{\mu}{(1+\frac{1}{4}|x|^2)^{\frac{\theta}{2}}}. \nonumber
\end{align}
 where the inequality follows from \cref{ass:A5}. 
 Therefore for all $|x|\ge \Tilde{N}_5$, we have that
 \begin{align*}
     \nabla^2 f_h(x) \succeq \frac{\mu}{(1+\frac{1}{4}|x|^2)^{\frac{\theta}{2}}} I_d.
 \end{align*}
 Meanwhile since $f_h\in C^2(\mb{R}^d)$, we can construct $\Tilde{f}_h\in C^2(\mb{R}^d)$ such that $\Tilde{f}_h(x)=f_h(x)$ for all $|x|\ge \Tilde{N}_5$, $\nabla^2\Tilde{f}_h(x)\succeq \frac{\mu}{(1+\frac{1}{4}|x|^2)^{\frac{\theta}{2}}} I_d$ for all $x\in \mb{R}^d$. Furthermore, since both $f_h$ and $\Tilde{f}_h$ are continuous, 
 \begin{align}\label{xi}
     \xi:=\lv f_h-\Tilde{f_h} \rv_\infty=\max_{|x|\le \Tilde{N}_5} |f_h(x)-\Tilde{f_h}(x)|<\infty
 \end{align}
 Therefore $f_h$ satisfies \cref{ass:Degenerate convexity at infinity} with parameters $\xi_h=\xi$, $\mu_h=\mu$ and $\theta_h=\theta$.
\end{proof}
Next we introduce a result which explains the relation between \cref{ass:A3} and \cref{alphadis}.
\begin{lemma}\label{alphadissi} If a potential function $f$ satisfies \cref{ass:A3} then the transformed potential $f_{h}$ satisfies \cref{alphadis}.
\end{lemma}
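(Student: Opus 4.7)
The strategy is simply to compute $\langle \nabla f_h(x), x\rangle$ explicitly on the regime $|x|\ge b^{-1/\beta}$, invoke \cref{ass:A3} to obtain the desired polynomial lower bound there, and then use continuity of $\nabla f_h$ on the remaining compact region to absorb the discrepancy into the constant $B_h$.

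First, I would recall the explicit form of $\nabla f_h$ from \eqref{GradientTHM} in \cref{TRPotential}. For $|x|\ge b^{-1/\beta}$, taking the inner product with $x$ and using $\psi(r)=e^{br^{\beta}}$, $\psi'(r)=b\beta r^{\beta-1}e^{br^{\beta}}$, one obtains the clean identity
\begin{align*}
\langle \nabla f_h(x), x\rangle = f'(\psi(|x|))\,\psi'(|x|)\,|x| - b\beta d\,|x|^{\beta} + (d-\beta),
\end{align*}
which is exactly the left-hand side appearing in \cref{ass:A3}. Hence, setting $\tilde N_1:=\max\{N_1, b^{-1/\beta}\}$, \cref{ass:A3} directly gives
\begin{align*}
\langle \nabla f_h(x), x\rangle > A|x|^{\alpha} - B \qquad \text{for all } |x|>\tilde N_1.
\end{align*}

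Next, I would handle the region $|x|\le \tilde N_1$. By \cref{EvalueC2}, $f_h\in \mathcal{C}^2(\mathbb{R}^d)$, so the function $x\mapsto \langle \nabla f_h(x), x\rangle - A|x|^{\alpha}$ is continuous on the compact ball $\{|x|\le \tilde N_1\}$; consequently there exists $M\ge 0$ with
\begin{align*}
\langle \nabla f_h(x), x\rangle \ge A|x|^{\alpha} - M \qquad \text{for all } |x|\le \tilde N_1.
\end{align*}
Setting $\alpha_h := \alpha$, $A_h := A$, and $B_h := \max\{B, M\}+1$, the two bounds combine to yield
\begin{align*}
\langle \nabla f_h(x), x\rangle > A_h |x|^{\alpha_h} - B_h \qquad \text{for all } x\in \mathbb{R}^d,
\end{align*}
which is precisely \cref{alphadis}. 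There is no substantive obstacle here: the only mildly delicate point is verifying that the left-hand side of \cref{ass:A3} is literally $\langle \nabla f_h, x\rangle$ in the outer regime, and that the interior region, though it may introduce a dimension-dependent constant $M$, is compact so that continuity of $\nabla f_h$ suffices to close the argument with a finite $B_h$.
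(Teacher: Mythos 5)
Your proof is correct and takes essentially the same approach as the paper: compute $\langle \nabla f_h(x), x\rangle$ explicitly on $|x|\ge b^{-1/\beta}$, invoke \cref{ass:A3} to get the polynomial lower bound, and absorb the compact remainder into $B_h$ using $f_h\in\mathcal{C}^2(\mathbb{R}^d)$. Your version is marginally more careful than the paper's write-up in that you take the threshold $\tilde N_1=\max\{N_1, b^{-1/\beta}\}$ (the paper's proof nominally uses only $b^{-1/\beta}$, which leaves the annulus $b^{-1/\beta}\le |x|\le N_1$ implicitly handled), but this is a precision rather than a different argument.
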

\begin{proof}[Proof of \cref{alphadissi}:] If a potential function $f$ satisfies \cref{ass:A3} with parameters $\alpha,A,B$, as we have shown in \eqref{GradientTHM}, for all $|x|\ge b^{-\frac{1}{\beta}}$ we have that
\begin{align}
    \langle \nabla f_{h}(x),x \rangle&=f'(e^{b|x|^\beta})b\beta|x|^{\beta}e^{b|x|^\beta}-\beta bd|x|^{\beta}+(d-\beta) \nonumber \\
    &\ge A|x|^\alpha-B . \nonumber
\end{align}
where the first inequality follows from \cref{ass:A3}. Since $f_{h}\in \mc{C}^2(\mb{R}^d)$, we have
\begin{align*}
    \min_{|x|\le b^{-\frac{1}{\beta}}}\langle \nabla f_{h}(x),x \rangle >-\infty
\end{align*}
Therefore $f_{h}$ satisfies \cref{alphadis} with parameters
\begin{align}\label{paraAB}
    \alpha_h=\alpha, \quad A_{h}=A, \quad B_{h}=\max \{ 0, B,  -\min_{|x|\le b^{-\frac{1}{\beta}}}\langle \nabla f_h(x),x \rangle  \}\in [0,\infty).
\end{align}
\end{proof}
With the above two results, we are ready to prove \cref{mLSILMC}.

\begin{proof}[Proof of \cref{mLSILMC}:] 
Taking the derivative of the KL-divergence from $\rho_t$ to $\pi_h$, we have
\begin{align}\label{dKL}
    \frac{d}{dt} H_{\pi_h}(\rho_t) &= \frac{d}{dt} \int_{\mb{R}^d} \log \left(\frac{\rho_t(x)}{\pi_h(x)}\right)\rho_t(x) dx  \nonumber\\
    &= \int_{\mb{R}^d} \frac{\partial \rho_t(x)}{\partial t}\log\left(\frac{\rho_t(x)}{\pi_h(x)}\right)dx+\int_{\mb{R}^d} \rho_t(x) \frac{\pi_h(x)}{\rho_t(x)} \frac{1}{\pi_h(x)}\frac{\partial \rho_t(x)}{\partial t} dx \nonumber\\
    &=\int_{\mb{R}^d} \nabla \cdot \left( \rho_t(x) \nabla \log \frac{\rho_t(x)}{\pi_h(x)} \right) \log \frac{\rho_t(x)}{\pi_h(x)} dx+0 \nonumber \\
    &=-\int_{\mb{R}^d} \rho_t(x) \left|\nabla \log \frac{\rho_t(x)}{\pi_h(x)}\right|^2 dx=-I_{\pi_h}(\rho_t),
\end{align}
where third identity follows from the Fokker-Planck equation 
$$
\frac{\partial \rho_t}{\partial t}=\nabla \cdot (\rho_t \nabla f_h)+\Delta \rho_t=\nabla \cdot \left( \rho_t \nabla \log \frac{\rho_t}{\pi_h} \right),
$$ 
and the fact that $\int \frac{\partial \rho_t}{\partial t}dx=\frac{d}{dt}\int \rho_t dx=0$. According to \cref{5toDGCon}, we have that $\pi_h$ satisfies \cref{ass:Degenerate convexity at infinity}. Hence, according to \cite[Theorem 1]{erdogdu2020convergence}, $\pi_h$ satisfies a modified LSI, i.e. for all probability densities $\rho$:
\begin{align*}
    H_{\pi_h}(\rho) \le \chmlsi I_{\pi_h}(\rho)^{1-\delta} M_s(\rho+\pi_h)^\delta,
\end{align*}
where $M_s(\rho)=\int_{\mb{R}^d} (1+|x|^2)^{s/2}\rho(x)dx $ is the $s$-th moment of any function $\rho$ and with $\xi$ defined in \eqref{xi}, $\delta$ and $\lambda$ are defined as 
\begin{align}\label{mLSIconst}
    \delta:=\frac{\theta}{s-2+2\theta} \in [0,\frac{1}{2}), \qquad \chmlsi=4e^{2\xi}\mu^{-\frac{s-2}{s-2+2\theta}}. 
\end{align}
Hence \eqref{dKL} can be further written as 
\begin{align}\label{UBdKL}
    \frac{d}{dt} H_{\pi_h}(\rho_t) \le - \lambda^{-\frac{1}{1-\delta}} H_{\pi_h}(\rho_t)^{\frac{1}{1-\delta}}M_s(\rho_t+\pi_h)^{-\frac{\delta}{1-\delta}}.
\end{align}
By \cref{alphadissi}, the transformed potential $f_h$ satisfies \cref{alphadis} with parameters $\alpha_h,A_h,B_h$. Hence, according to \cite[Proposition 2]{toscani2000trend}, under the $\alpha_h$-dissipativity of $f_h$, for all $s\ge 2$:
\begin{align*}
    M_s(\rho_t+\pi_h)\le M_s(\rho_0+\pi_h)+C_st,
\end{align*}
where 
\begin{align}\label{UBCs}
    C_s=\sup_{x\ge 0} \left((ds+s(s-2)-sA_h+sB_h)x^{\frac{s-2}{s-2+\alpha}}-A_h x \right)<\infty.
\end{align}
Therefore the upper bound in \eqref{UBdKL} can improved as 
\begin{align}\label{UB2dKL}
    \frac{d}{dt}H_{\pi_h}(\rho_t) \le -\lambda^{-\frac{1}{1-\delta}}H_{\pi_h}(\rho_t)^{\frac{1}{1-\delta}}\left(M_s(\rho_0+\pi_h)+C_s t \right)^{-\frac{\delta}{1-\delta}}.
\end{align}
Rewriting \eqref{UB2dKL} as
\begin{align}
    -H_{\pi_h}(\rho_t)^{-\frac{1}{1-\delta}}\frac{d}{dt}H_{\pi_h}(\rho_t) \ge (\lambda C_s^\delta)^{-\frac{1}{1-\delta}}(M_s(\rho_0+\pi_h)C_s^{-1}+t)^{-\frac{\delta}{1-\delta}},
\end{align}
and applying Gronwall's inequality, we obtain
\begin{align}\label{UBTLD}
    H_{\pi_h}(\rho_t) &\le (\frac{1-2\delta}{\delta})^{\frac{1-\delta}{\delta}}(\lambda C_s^\delta)^{\frac{1}{\delta}}(M_s(\rho_0+\pi_h)C_s^{-1}+t)^{-\frac{1-2\delta}{\delta}}\le \frac{C}{t^l}.
\end{align}
with $C=(\frac{1-2\delta}{\delta})^{\frac{1-\delta}{\delta}}(\lambda C_s^\delta)^{\frac{1}{\delta}}$ and $l=(1-2\delta)/\delta$.
\end{proof}

To prove~\cref{thm:mlsi and tail ass}, we require the following result on the relationship between \cref{ass:tail assumption on original potential} and \cref{ass:tail assumption for transformed density}.

\begin{lemma}\label{lem: tail ass to tail ass} If the density $\pi$ satisfies \cref{ass:tail assumption on original potential}, then $\pi$ satisfies \cref{ass:tail assumption for transformed density}.
\end{lemma}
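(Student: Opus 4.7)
The plan is to use the fact that $Y \coloneqq h^{-1}(X)$ is radially a deterministic rescaling of $X$. Specifically, since $h$ is isotropic with $|h(y)| = g(|y|)$ (by~\eqref{Hmap}), we have $|Y| = g^{-1}(|X|)$, so that for any threshold $t \ge 0$
\begin{align*}
\pi_h\{|\cdot| \ge t\} = \pi\{g^{-1}(|\cdot|) \ge t\} = \pi\{|\cdot| \ge g(t)\}.
\end{align*}
The goal is then to pick $m_h$ large enough that, for every $\lambda \ge 0$, the quantity $g(m_h + \lambda)$ lies in the exponential regime of $g$, falls within the range of validity of \cref{ass:tail assumption on original potential}, and the resulting exponent can be translated back into an explicit function of $\lambda$.

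First, I would choose $m_h$ to satisfy three requirements simultaneously: $(a)$ $m_h \ge b^{-1/\beta}$ so that $g(m_h + \lambda) = \psi(m_h + \lambda) = e^{b(m_h+\lambda)^\beta}$ for all $\lambda \ge 0$; $(b)$ $\psi(m_h) \ge 2m + 2N_5$ so that the shift $\psi(m_h+\lambda) - m \ge N_5$ lies in the regime where \cref{ass:tail assumption on original potential} applies; and $(c)$ $m_h \ge 2 b^{-1/\beta}$ to control the additive perturbation in the $\psi^{-1}$ step below. A concrete choice is $m_h = \max\{2 b^{-1/\beta},\, \psi^{-1}(2m + 2N_5)\}$, which yields the explicit formula that should populate~\eqref{eq:tail ass para m}.

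With this choice, applying \cref{ass:tail assumption on original potential} with shift $\mu = \psi(m_h+\lambda) - m \ge N_5$ gives
\begin{align*}
\pi_h\{|\cdot| \ge m_h + \lambda\} = \pi\{|\cdot| \ge m + \mu\} \le 2\exp\!\left(-\left(\tfrac{\psi^{-1}(\psi(m_h+\lambda) - m)}{\ctail^*}\right)^{\alpha_1}\right).
\end{align*}
The key analytic step is then to lower bound $\psi^{-1}(\psi(m_h+\lambda) - m)$ by a linear function of $\lambda$. Using $\psi^{-1}(u) = (\log u /b)^{1/\beta}$ and $\log(1 - x) \ge -2x$ for $x \in [0,1/2]$, and noting that $m/\psi(m_h+\lambda) \le 1/2$ by condition $(b)$, I obtain
\begin{align*}
\psi^{-1}(\psi(m_h+\lambda) - m) \ge \Big((m_h + \lambda)^\beta - \tfrac{2m}{b\,\psi(m_h+\lambda)}\Big)^{1/\beta} \ge \Big((m_h+\lambda)^\beta - \tfrac{1}{b}\Big)^{1/\beta},
\end{align*}
and condition $(c)$ guarantees $(m_h+\lambda)^\beta \ge 2/b$, so the right-hand side is at least $(m_h+\lambda)/2^{1/\beta} \ge \lambda/2^{1/\beta}$. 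This yields the transformed tail bound with exponent $\alpha_{h,1} = \alpha_1$ and constant $\chtail = 2^{1/\beta}\ctail^*$, which is the explicit formula referenced in~\eqref{eq:tail ass para ctail}.

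The main obstacle I anticipate is not analytical but bookkeeping: one must carefully balance the $-m$ shift inside $\psi^{-1}$ (which comes from the translation $m + \lambda \leftrightarrow m_h + \lambda'$) with the exponential growth of $\psi$, and verify that a single choice of $m_h$ makes the resulting lower bound on $\psi^{-1}(\psi(m_h+\lambda) - m)$ linear in $\lambda$ uniformly over $\lambda \ge 0$, including at $\lambda = 0$. The $2^{1/\beta}$ loss above is harmless since $\beta \in (1,2]$, but a naive choice of $m_h$ (e.g.\ just $m_h \ge b^{-1/\beta}$) only gives a linear lower bound for large $\lambda$ and would fail near $\lambda = 0$; this is why condition $(b)$ is essential.
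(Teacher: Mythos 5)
Your proof is correct and establishes the lemma, but it takes a genuinely different route from the paper's. The paper chooses a $\lambda$-dependent shift $m_h(\lambda) = g^{-1}(m + g(\lambda)) - \lambda$ so that the change of variable lands \emph{exactly} on the form $\pi\{|\cdot| \ge m + g(\lambda)\}$ with no logarithmic loss, and then controls $m_h(\lambda)$ uniformly via the mean value theorem applied to $g^{-1}$, taking $m_h = m\sup_{r}(g^{-1})'(r)$; this requires first extending the original tail bound from $\lambda\ge N_5$ to all $\lambda\ge0$ by inflating $\ctail^*$ (since $g(\lambda)$ can be small), and it leans on the finiteness of $\sup_r (g^{-1})'(r)$, which is plausible under~\cref{ass:G0} but not verified in the proof. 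You instead fix $m_h$ once and for all — $m_h = \max\{2b^{-1/\beta}, \psi^{-1}(2m+2N_5)\}$ — and absorb the mismatch between $\psi(m_h+\lambda)$ and $\psi(m_h+\lambda) - m$ by an explicit estimate on $\psi^{-1}$ using $\log(1-x) \ge -2x$, at the cost of a factor $2^{1/\beta}$ in $\chtail$. Your approach buys explicitness (everything is computed from $b,\beta,m,N_5,\ctail^*$, with no appeal to $\sup (g^{-1})'$) and avoids the paper's preliminary extension step altogether, since your thresholds $\psi(m_h+\lambda) - m \ge m + 2N_5$ always stay in the original regime of validity. Both proofs implicitly assume $N_5$ (or $2m + 2N_5$) is large enough that $\psi^{-1}$ is defined at the relevant arguments; the paper states ``WLOG $N_5 \ge e$'' explicitly, and you should do the same to justify writing $\psi^{-1}(2m+2N_5)$.
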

\begin{proof}[Proof of \cref{lem: tail ass to tail ass}]\label{pf:tail ass to tail ass} Without loss of generality, we can assume that $N_5\ge e$. When $\lambda\ge N_5\ge e$, 
\begin{align*}
     \pi\left\{ |\cdot|\ge m+\lambda \right\}\le 2\exp\left( -\left( \frac{g^{-1}(\lambda)}{\ctail} \right)^{\alpha_1} \right)
\end{align*}
with $\ctail=\ctail^*$. When $\lambda\in [0,N_5]$, we have
\begin{align*}
    \pi\left\{ |\cdot|\ge m+\lambda \right\}&\le \pi\left\{ |\cdot|\ge m \right\}\\
    &\le 2\exp\left( -\left( \frac{g^{-1}(\lambda)}{\ctail} \right)^{\alpha_1} \right)
\end{align*}
with $\ctail=g^{-1}(N_5)\left( \log \frac{2}{\pi\left\{ |\cdot|\ge m \right\}} \right)^{\frac{1}{\alpha_1}}$. Therefore for all $\lambda\ge 0$,
\begin{align}\label{eq:tail ass original density}
    \pi\left\{ |\cdot|\ge m+\lambda \right\}\le 2\exp\left( -\left( \frac{g^{-1}(\lambda)}{\ctail} \right)^{\alpha_1} \right),
\end{align}
with
\begin{align*}
    \ctail=\max \left\{ \ctail^*, g^{-1}(N_5)\left( \log \frac{2}{\pi\left\{ |\cdot|\ge m \right\}} \right)^{\frac{1}{\alpha_1}} \right\} .
\end{align*}
From \eqref{eq:tail ass original density}, let $X\in \mb{R}^d$ be a random variable with density $\pi$ and $Y:=h^{-1}(X)$. Then $Y\in \mb{R}^d$ is a random variable with density $\pi_h$. We get 
\begin{align*}
    \pi_h\left\{ |\cdot| \ge m_h+ \lambda \right\}&=\mb{P}\left( |Y|\ge m_h+\lambda \right)\\
    &=\mb{P}\left( g^{-1}(|X|)\ge m_h+\lambda \right)\\
    &=\mb{P}\left( |X|\ge g(m_h+\lambda) \right).
\end{align*}
For any fixed $\lambda\ge 0$, we can choose $m_h(\lambda)=g^{-1}(m+g(\lambda))-\lambda$ and we get
\begin{align*}
    \pi_h\left\{ |\cdot| \ge m_h(\lambda)+ \lambda \right\}&\le \mb{P}\left( |X|\ge m+g(\lambda) \right) \\
    &=\pi\left\{ |\cdot|\ge m+g(\lambda) \right\}\\
    &\le 2\exp\left( -\left( \frac{\psi^{-1}(g(\lambda))}{\ctail} \right)^{\alpha_1} \right)\\
    &=2\exp\left( -\left( \frac{\lambda}{\ctail} \right)^{\alpha_1} \right).
\end{align*}
We next claim that there exists a constant $m_h$ such that $m_h(\lambda)\le m_h$ for all $\lambda\ge 0$. To prove the claim, we apply Taylor expansion in the definition of $m_h(\lambda)$ and we get for any $\lambda\ge 0$, there exists a constant $\theta(\lambda)\in [0,m]$ such that
\begin{align*}
    m_h(\lambda)&=g^{-1}\left(g(\lambda)\right)+(g^{-1})'(g(\lambda))m-\lambda\\
    &\le  m \sup_{r\in [0,\infty)} \left(g^{-1})'(r)\right).
\end{align*}
According to our construction of $g$, we have that $\sup_{r\in [0,\infty)} \left(g^{-1})'(r)\right)<\infty$. Therefore we can pick $m_h=m\left(\sup_{r\in [0,\infty)} \left(g^{-1})'(r)\right)\right)$ which is a constant independent of $\lambda$, which proves the claim. Hence, we get for all $\lambda\ge 0$,
\begin{align*}
    \pi_h\left\{ |\cdot| \ge m_h+ \lambda \right\}&\le 2\exp\left( -\left( \frac{\psi^{-1}(g(\lambda))}{\ctail} \right)^{\alpha_1} \right).
\end{align*}
That is, the transformed density $\pi_h$ satisfies \cref{ass:tail assumption for transformed density} with 
\begin{align}
    &\alpha_h=\alpha_1, \label{eq:tail ass para alpha}\\
    &\chtail=\max \left\{ \ctail^*, g^{-1}(N_6)\left( \log \frac{2}{\pi\left\{ |\cdot|\ge m \right\}} \right)^{\frac{1}{\alpha_1}} \right\} , \label{eq:tail ass para ctail}\\ 
    & m_h=m\left(\sup_{r\in [0,\infty)} \left(g^{-1})'(r)\right)\right) \label{eq:tail ass para m}.
\end{align}
\end{proof}
\end{myassump}
\begin{proof}[Proof of \cref{thm:mlsi and tail ass}]\label{pf:mlsi and tail ass} 
Let $\hat{\pi}_h$ be a modified density to $\pi_h$. It's defined as, for $\hat{\gamma},\hat{R}>0$,
\begin{align}\label{eq:modified density}
    \hat{\pi}_h\propto \exp(-\hat{f_h}),\qquad \hat{f_h}(x):=f_h(x)+\frac{\hat{\gamma}}{2}(|x|-\hat{R})_{+}^2.
\end{align}
Here $(|x|-\hat{R})_{+}^2$ is interpreted as $\max\left\{ |x|-\hat{R},0 \right\}^2$. Furthermore, $\hat{R}$ is chosen so that $\pi_h(B(0,\hat{R}))\ge \frac{1}{2}$, where $B(0,\hat{R})$ is an Euclidean ball of radius $\hat{R}$ centered at zero. With this definition, the proof follows immediately from \cref{5toDGCon}, \cite[Theorem 1]{erdogdu2020convergence} and \cite[Theorem 8]{chewi2021analysis}.
\end{proof}

\subsection{Proofs for~\cref{sec:htfunctionalineq}}

\begin{proof}[Proof of \cref{Prop:A1toPoincare}:]
When $|x|\ge g( b^{-\frac{1}{\beta}})=e$, the inverse of $g$ can be represented as
\begin{align*}
    g^{-1}(|x|)=b^{-\frac{1}{\beta}}\log^{\frac{1}{\beta}}|x|.
\end{align*}
Therefore, \cref{ass:A1} can be reformulated as for all $|x|\ge g^{-1}(N_3 \vee b^{-\frac{1}{\beta}}):=g^{-1}(\Tilde{N}_1)$:
\begin{align}
    &b^{\frac{2}{\beta}}[f'(|x|)\beta \log^{1-\frac{2}{\beta}}(|x|)|x|-\beta d \log^{1-\frac{2}{\beta}}(|x|)+(d-\beta)\log ^{-\frac{2}{\beta}}(|x|)]>\rho, \label{WPIA11}\\
    &b^{\frac{2}{\beta}}[f''(|x|)\beta^2\log^{2-\frac{2}{\beta}}(|x|)|x|^2+ f'(|x|)\beta(\beta-1)\log^{1-\frac{2}{\beta}}(|x|)|x| \nonumber \\
    &+f'(|x|)\beta^2\log ^{2-\frac{2}{\beta}}(|x|)|x|-\beta(\beta-1)\log ^{1-\frac{2}{\beta}}(|x|)-(d-\beta)\log^{-\frac{2}{\beta}}(|x|) ]>\rho. \label{WPIA22}
\end{align}
Now, \eqref{WPIA11} gives a lower bound on $f'(|x|)$ of the form:
\begin{align*}
    f'(|x|)\ge \rho b^{-\frac{2}{\beta}}\beta^{-1}\log^{-(1-\frac{2}{\beta})}(|x|)|x|^{-1}+d|x|^{-1}-\frac{d-\beta}{\beta}\log^{-1}(|x|)|x|^{-1}.
\end{align*}
Defining $N:=g^{-1}(\Tilde{N}_1)$ and integrating from $N$ to a larger value with respect to $|x|$, we obtain
\begin{align*}
    f(|x|)&\ge f(N)+\frac{1}{2}\rho b^{-\frac{2}{\beta}}(\log^{\frac{2}{\beta}}(|x|)-\log^{\frac{2}{\beta}}N)+d(\log |x|-\log N)-\frac{d-\beta}{\beta}(\log\log |x|-\log\log N) \\
    &:=C_{N,d}+ \frac{1}{2}\rho b^{-\frac{2}{\beta}}\log^{\frac{2}{\beta}}(|x|)+d\log|x|-\frac{d-\beta}{\beta}\log\log|x|.
\end{align*}
Therefore we have for all $|x|>N$:
\begin{align}\label{A21ExpLowB}
    e^{f(x)}\ge C_{N,d} |x|^{\frac{1}{2}\rho b^{-\frac{2}{\beta}}\log^{\frac{2}{\beta}-1}(|x|)+d} \log^{-\frac{d-\beta}{\beta}}(|x|).
\end{align}
To prove that $\pi$ satisfies the Poincar\'e-type inequalities, we leverage the results in \cite{wang2014simple} and \cite{wang2015functional}. We consider the following quantity with $\vartheta \in (0,2)$ and $x\neq y$:
\begin{align*}
    \frac{e^{f(x)}+e^{f(y)}}{|x-y|^{d+\vartheta}}
\end{align*}
Since $|x-y|^{d+\vartheta}\le 2^{d+\vartheta-1}(|x|^{d+\vartheta}+|y|^{d+\vartheta})$, we have for all $x\neq y$ and $|x|,|y|>N$, we have
\begin{align}\label{Lowerbound1}
     \frac{e^{f(x)}+e^{f(y)}}{|x-y|^{d+\vartheta}}\ge \frac{C_{N,d}}{2^{d+\vartheta-1}}\frac{|x|^{\frac{1}{2}\rho b^{-\frac{2}{\beta}}\log^{\frac{2}{\beta}-1}(|x|)+d}\log^{-\frac{d-\beta}{\beta}}(|x|)+|y|^{\frac{1}{2}\rho b^{-\frac{2}{\beta}}\log^{\frac{2}{\beta}-1}(|y|)+d}\log^{-\frac{d-\beta}{\beta}}(|y|)}{|x|^{d+\vartheta}+|y|^{d+\vartheta}}.
\end{align}
Then, \eqref{WPIA22} gives the lower bound
\begin{align*}
    &f''(|x|)+f'(|x|)\left( \frac{\beta-1}{\beta}\log^{-1}(|x|)|x|^{-1}+|x|^{-1}  \right)\\
    &\quad \ge \rho b^{-\frac{2}{\beta}}\beta^{-2}\log^{-(2-\frac{2}{\beta})}(|x|)|x|^{-2} +\frac{\beta-1}{\beta} \log^{-1}(|x|)|x|^{-2}+\frac{d-\beta}{\beta^2}\log^{-2}(|x|)|x|^{-2}.
\end{align*}
By multiplying $\log^{1-\frac{1}{\beta}}(|x|)|x|$ on both sides, for all $|x|>N$ we obtain
\begin{align*} 
    \frac{d}{d|x|}\left( f'(|x|)\log^{1-\frac{1}{\beta}}(|x|)|x| \right)\ge & \rho b^{-\frac{2}{\beta}}\beta^{-2}\log^{-(1-\frac{1}{\beta})}(|x|)|x|^{-1}+\frac{\beta-1}{\beta} \log^{-\frac{1}{\beta}}(|x|)|x|^{-1}\\
    &+\frac{d-\beta}{\beta}\log^{-1-\frac{1}{\beta}}(|x|)|x|^{-1},
\end{align*}
which implies that
\begin{align*}
    f'(|x|)\ge C_{N,d,1}\log ^{-(1-\frac{1}{\beta})}(|x|)|x|^{-1}+\rho b^{-\frac{2}{\beta}}\beta^{-1}\log^{-(1-\frac{2}{\beta})}(|x|)|x|^{-1}+|x|^{-1}-(d-\beta)\log^{-1}(|x|)|x|^{-1}.
\end{align*}
Further integration implies that for all $|x|>N$, we have
\begin{align}\label{A22ExpLowB}
    e^{f(|x|)}\ge C_{N,d,2}  |x|^{1+\frac{1}{2}\rho b^{-\frac{2}{\beta}}\log^{\frac{2}{\beta}-1}(|x|)+C_{N,d,1}\beta \log^{\frac{1}{\beta}-1}(|x|)}\log^{-(d-\beta)}|x|.
\end{align}
Since $d\ge 1$, \eqref{A21ExpLowB} is stronger than \eqref{A22ExpLowB}, when we apply results in \cite{wang2014simple}, it's enough for us to consider only \eqref{A21ExpLowB}. Therefore we have the following results:
\begin{itemize}
    \item [(1)] When $\beta\in (1,2)$ or $\beta=2,\vartheta<\frac{1}{2}\rho b^{-1}$, we can see that for all $|x|>N$:
    \begin{align*}
        \frac{1}{2}\rho b^{-\frac{2}{\beta}}\log^{\frac{2}{\beta}-1}(|x|)+d-(d+\vartheta)>0
    \end{align*}
    Therefore, with \eqref{Lowerbound1}, conditions in \cite[Theorem 1.1-(3)]{wang2014simple} is satisfied with $$\omega(x)=\frac{C_{N,d}}{2^{d+\vartheta}}|x|^{\frac{1}{2}\rho b^{-\frac{2}{\beta}}\log^{\frac{2}{\beta}-1}(|x|)-\vartheta}\log^{-\frac{d-\beta}{\beta}}(|x|),$$ such that $\lim_{|x|\to \infty}\omega(x)=\infty$. Hence, $\pi\propto \exp(-f)$ satisfies the super-Poincar\'{e} inequality.
    \item [(2)] When $\beta=2, \vartheta=\frac{1}{2}\rho b^{-1}, d=1,2$, we can see that for all $|x|>N$, we have
    \begin{align*}
        \frac{1}{2}\rho b^{-\frac{2}{\beta}}\log^{\frac{2}{\beta}-1}(|x|)+d-(d+\vartheta)=0
    \end{align*}
    Therefore with \eqref{Lowerbound1}, since $d=1,2$ we obtain
    \begin{align*}
        \frac{e^{f(x)}+e^{f(y)}}{|x-y|^{d+\vartheta}}\ge \frac{C_{N,d}}{2^{d+\vartheta-1}}.
    \end{align*}
    Hence, according to in \cite[Theorem 1.1-(1)]{wang2014simple}, $\pi\propto \exp(-f)$ satisfies the Poincar\'e inequality.
    \item [(3)]When $\beta=2, \vartheta=\frac{1}{2}\rho b^{-1}, d\ge 3$, for all $|x|>N$, we have that
    \begin{align*}
        \frac{1}{2}\rho b^{-\frac{2}{\beta}}\log^{\frac{2}{\beta}-1}(|x|)+d-(d+\vartheta)=0
    \end{align*}
    However, the lower bound in \eqref{Lowerbound1} goes to zero as $|x|,|y|\to \infty$. Neither Poincar\'e inequality nor super Poincar\'e inequality is guaranteed. However, according to \cite[Theorem 1.1-(2)]{wang2014simple}, $\pi\propto \exp(-f)$ satisfies the weak Poincar\'e inequality with $\alpha(r)$ as defined in~\eqref{eq:alphar}. 
    When $\beta=2, \vartheta>\frac{1}{2}\rho b^{-1}$, we can see that for all $|x|>N$: 
    \begin{align*}
        \frac{1}{2}\rho b^{-\frac{2}{\beta}}\log^{\frac{2}{\beta}-1}(|x|)+d-(d+\vartheta)<0
    \end{align*}
    Neither Poincar\'e inequality nor super Poincar\'e inequality is guaranteed. However, according to \cite[Theorem 1.1-(2)]{wang2014simple}, $\pi\propto \exp(-f)$ satisfies the weak Poincar\'e inequality with $\alpha(r)$ as in~\eqref{eq:alphar}. 
\end{itemize}
\end{proof}
\begin{proof}[Proof of \cref{Prop:A3toPoincare}:]
Similar to \cref{ass:A1}, \cref{ass:A3} are sufficient conditions for Poincar\'e type inequalities as well. First note that \cref{ass:A3} is equivalent to the following inequality: for all $|x|>N:=g^{-1}(N_1\vee b^{-\frac{1}{\beta}})$ we have
\begin{align*}
    f'(|x|)\ge A\beta^{-1}b^{-\frac{\alpha}{\beta}}\log^{\frac{\alpha}{\beta}-1}(|x|)|x|^{-1}+d|x|^{-1}-\frac{B}{\beta}\log^{-1}(|x|)|x|^{-1}.
\end{align*}
Integrating with respect to $|x|$, we obtain
\begin{align*}
    f(|x|)\ge C_{N,d}+Ab^{-\frac{\alpha}{\beta}}\alpha^{-1}\log^{\frac{\alpha}{\beta}}(|x|)+d\log|x|-\frac{B}{\beta}\log\log|x|.
\end{align*}
For all $|x|\ge N$, we then have
\begin{align*}
    e^{f(|x|)}\ge C_{N,d} |x|^{A\alpha^{-1}b^{-\frac{\alpha}{\beta}}\log^{\frac{\alpha}{\beta}-1}(|x|)+d}\log^{-\frac{B}{\beta}}|x|.
\end{align*}
and
\begin{align}\label{Lowerbound2}
     \frac{e^{f(x)}+e^{f(y)}}{|x-y|^{d+\vartheta}}\ge \frac{C_{N,d}}{2^{d+\vartheta-1}}\frac{|x|^{A\alpha^{-1}b^{-\frac{\alpha}{\beta}}\log^{\frac{\alpha}{\beta}-1}(|x|)+d}\log^{-\frac{B}{\beta}}|x|+|y|^{A\alpha^{-1}b^{-\frac{\alpha}{\beta}}\log^{\frac{\alpha}{\beta}-1}(|y|)+d}\log^{-\frac{B}{\beta}}|y|}{|x|^{d+\vartheta}+|y|^{d+\vartheta}}.
\end{align}
We now consider different cases. 
\begin{itemize}
    \item [(1)] When $\alpha>\beta$ or $\alpha=\beta, \vartheta<A\beta^{-1}b^{-1}$, we can see that for all $|x|>N$ we have that
    \begin{align*}
       A\alpha^{-1}b^{-\frac{\alpha}{\beta}}\log^{\frac{\alpha}{\beta}-1}(|x|)+d-(d+\vartheta)>0.
    \end{align*}
    Therefore, with \eqref{Lowerbound2}, the conditions in \cite[Theorem 1.1-(3)]{wang2014simple} are satisfied with $$\omega(x)=\frac{C_{N,d}}{2^{d+\vartheta}}|x|^{A\alpha^{-1}b^{-\frac{\alpha}{\beta}}\log^{\frac{\alpha}{\beta}-1}(|x|)-\vartheta}\log^{-\frac{B}{\beta}}(|x|),$$
    such that $\lim_{|x|\to \infty}\omega(x)=\infty$. Hence, $\pi\propto \exp(-f)$ satisfies the super Poincar\'e inequality.
    \item [(2)] When $\alpha=\beta, \vartheta=A\beta^{-1}b^{-1}$, we can see that for all $|x|>N$ we have
    \begin{align*}
        A\alpha^{-1}b^{-\frac{\alpha}{\beta}}\log^{\frac{\alpha}{\beta}-1}(|x|)+d-(d+\vartheta)=0.
    \end{align*}
    However, the lower bound in \eqref{Lowerbound2} goes to zero as $|x|,|y|\to \infty$. Hence, Neither the Poincar\'e inequality nor the super Poincar\'e inequality is satisfied. However, according to \cite[Theorem 1.1-(2)]{wang2014simple}, the density $\pi\propto \exp(-f)$ satisfies the weak Poincar\'e inequality with $\alpha(r)$ as in~\eqref{eq:alphar}.
    \item [(3)] When $\alpha=\beta, \vartheta>A\beta^{-1}b^{-1}$, we can see that for all $|x|>N$ we have
    \begin{align*}
         A\alpha^{-1}b^{-\frac{\alpha}{\beta}}\log^{\frac{\alpha}{\beta}-1}(|x|)+d-(d+\vartheta)<0.
    \end{align*}
    Hence, neither the Poincar\'e inequality nor super Poincar\'e inequality is guaranteed. However, according to \cite[Theorem 1.1-(2)]{wang2014simple}, $\pi\propto \exp(-f)$ satisfies the weak Poincar\'e inequality with $\alpha(r)$ as in~\eqref{eq:alphar}.
\end{itemize}
\end{proof}
\begin{proof}[Proof of \cref{Prop:A5toPoincare}:]
Similar as in the proof of \cref{Prop:A1toPoincare}, \cref{ass:A5} is equivalent to the following two inequalities: for all $|x|\ge N$,
\begin{align}
    &f'(|x|)\ge \mu b^{-\frac{2}{\beta}}\beta^{-1} \log^{-(1-\frac{2}{\beta})}(|x|)(1+\frac{1}{4}b^{-\frac{2}{\beta}}\log^{\frac{2}{\beta}}(|x|))^{-\frac{\theta}{2}}|x|^{-1}+d|x|^{-1}-\frac{d-\beta}{\beta}\log^{-1}(|x|)|x|^{-1}, \label{A51WPI}\\
    &f''(|x|)+f'(|x|)\left( \frac{\beta-1}{\beta}\log^{-1}(|x|)|x|^{-1}+|x|^{-1}  \right) \nonumber \\
    &\ge \mu b^{-\frac{2}{\beta}}\beta^{-2}\log^{-(2-\frac{2}{\beta})}(|x|)(1+\frac{1}{4}b^{-\frac{2}{\beta}}\log^{\frac{2}{\beta}}(|x|))^{-\frac{\theta}{2}}|x|^{-2} +\frac{\beta-1}{\beta} \log^{-1}(|x|)|x|^{-2}+\frac{d-\beta}{\beta^2}\log^{-2}(|x|)|x|^{-2}. \label{A52WPI}
\end{align}
Choosing $N'>N$ such that for $|x|>N'$, $b^{-\frac{2}{\beta}}\log^{\frac{2}{\beta}}(|x|)>4/3$, it then implies from \eqref{A51WPI} that for all $|x|>N'$ we have
\begin{align}
    e^{f(|x|)}>C_{N,d} |x|^{(2-\theta)^{-1}\mu b^{-\frac{2-\theta}{\beta}}\log^{\frac{2-\theta}{\beta}-1}(|x|)+d}\log^{-\frac{d-\beta}{\beta}}(|x|). \label{A51WPILowBound}
\end{align}
Furthermore, \eqref{A52WPI} implies that for all $|x|>N'$,
\begin{align}
    e^{f(|x|)}>C_{N,d} |x|^{(1-\theta)^{-1}(2-\theta)^{-1}\mu b^{-\frac{2-\theta}{\beta}}\log^{\frac{2-\theta}{\beta}-1}(|x|)+C_N \log ^{\frac{1}{\beta}-1}(|x|)+1}\log^{-(d-\beta)} (|x|). \label{A52WPILowBound}
\end{align}
We now consider the different cases as before.
\begin{itemize}
    \item [(1)] When $\theta<2-\beta$, \eqref{A52WPILowBound} is stronger than \eqref{A51WPILowBound}. We have that for large $|x|$,
    \begin{align*}
        (1-\theta)^{-1}(2-\theta)^{-1}\mu b^{-\frac{2-\theta}{\beta}}\log^{\frac{2-\theta}{\beta}-1}(|x|)+C_N \log ^{\frac{1}{\beta}-1}(|x|)+1-(d+\vartheta)>0.
    \end{align*}
    Therefore, when $\theta<2-\beta$,  the conditions in \cite[Theorem 1.1-(3)]{wang2014simple} are satisfied with $$\omega(x)=\frac{C_{N,d}}{2^{d+\vartheta}}|x|^{(1-\theta)^{-1}(2-\theta)^{-1}\mu b^{-\frac{2-\theta}{\beta}}\log^{\frac{2-\theta}{\beta}-1}(|x|)+1-(d+\vartheta)}\log^{-\frac{d-\beta}{\beta}}(|x|),$$ with $\lim_{|x|\to \infty}\omega(x)=\infty$. Hence, $\pi\propto \exp(-f)$ satisfies the super Poincar\'e inequality.
    \item [(2)] When $\theta=2-\beta, \mu\beta^{-1}b^{-1}>\vartheta$, \eqref{A51WPILowBound} is stronger than \eqref{A52WPILowBound}. We have that for all $|x|>N'$,
    \begin{align*}
      (2-\theta)^{-1}\mu b^{-\frac{2-\theta}{\beta}}\log^{\frac{2-\theta}{\beta}-1}(|x|)+d-(d+\vartheta)>0.
    \end{align*}
    Therefore, when $\theta=2-\beta, \mu\beta^{-1}b^{-1}>\vartheta$, the conditions in \cite[Theorem 1.1-(3)]{wang2014simple} are satisfied with $$\omega(x)=\frac{C_{N,d}}{2^{d+\vartheta}}|x|^{b^{-\frac{2-\theta}{\beta}}\log^{\frac{2-\theta}{\beta}-1}(|x|)-\vartheta}\log^{-\frac{d-\beta}{\beta}}(|x|),$$ with $\lim_{|x|\to \infty}\omega(x)=\infty$. Hence, $\pi\propto \exp(-f)$ satisfies the super Poincar\'e inequality.
    \item [(3)] When $\theta=2-\beta, \mu\beta^{-1}b^{-1}\le \vartheta$, \eqref{A51WPILowBound} is stronger than \eqref{A52WPILowBound}. We have that for all $|x|$ large enough,
    \begin{align*}
       &(2-\theta)^{-1}\mu b^{-\frac{2-\theta}{\beta}}\log^{\frac{2-\theta}{\beta}-1}(|x|)+d-(d+\vartheta)\le 0.
    \end{align*}
   Neither Poincar\'e inequality nor super Poincar\'e inequality is guaranteed. According to \cite[Theorem 1.1-(2)]{wang2014simple}, $\pi\propto \exp(-f)$ satisfies the weak Poincar\'e inequality with $\alpha(r)$ in~\eqref{eq:alphar}. 
    \item [(4)] When $\theta>2-\beta$, \eqref{A51WPI} is stronger than \eqref{A52WPILowBound}. We have that for all $|x|$ large enough,
    \begin{align*}
         (2-\theta)^{-1}\mu b^{-\frac{2-\theta}{\beta}}\log^{\frac{2-\theta}{\beta}-1}(|x|)+d-(d+\vartheta)<0.
    \end{align*}
    Hencce, neither the Poincar\'e inequality nor the super Poincar\'e inequality is guaranteed. However, according to \cite[Theorem 1.1-(2)]{wang2014simple}, $\pi\propto \exp(-f)$ satisfies the weak Poincar\'e inequality with  $\alpha(r)$ in~\eqref{eq:alphar}.
\end{itemize}
\end{proof}

\subsection{Proofs for~\cref{sec:Examples}}\label{sec:secegproofs}
\begin{proof}[Proof of \cref{GinG0}:] First, it's easy to check that $g_{in}(0)=0$ and $g_{in}(b^{-\frac{1}{2}})=e$, which implies that $g\in \mc{C}([0,\infty))$.  Next note that we have
\begin{align*}
    \log \frac{g_{in}(r)}{r}=\log(b^{\frac{1}{2}})+br^2-\frac{10}{3}b^{\frac{3}{2}}r^3+\frac{15}{4}b^2r^4-\frac{6}{5}b^{\frac{5}{2}}r^5+\frac{47}{60}.
\end{align*}
Hence, we can then check that 
$$\lim_{r\to 0_+} \left|\frac{\frac{d}{dr}\log \frac{g_{in}(r)}{r}}{r}\right|<\infty~~\text{and}~~\lim_{r\to 0_+} \left|\frac{d}{dr^2} \log \frac{g_{in}(r)}{r}\right|<\infty.$$ 
Note that the first derivative of $g_{in}$ is given by 
\begin{align*}
    g_{in}'(r)=b^{\frac{1}{2}}\left(1+2br^2-10b^{\frac{3}{2}}r^3+15b^2r^4-6b^{\frac{5}{2}}r^5\right)\exp\left(br^2-\frac{10}{3}b^{\frac{3}{2}}r^3+\frac{15}{4}b^2r^4-\frac{6}{5}b^{\frac{5}{2}}r^5+\frac{47}{60}\right).
\end{align*}
Hence, we have that
\begin{align*}
    \lim_{r\to 0_+} |f'(g_{in}(r))g_{in}'(r)|=(d+\varepsilon)\lim_{r\to 0_+}\left|\frac{g_{in}'(r)}{1+g_{in}(r)^2}\right| \frac{g_{in}(r)}{r}<\infty.
\end{align*}
Similarly, as  $g_{in}'(b^{-\frac{1}{2}})=2b^{\frac{1}{2}}e$ and
\begin{align*}
    \log g_{in}'(r)=&\log (b^{\frac{1}{2}})+\log(1+2br^2-10b^{\frac{3}{2}}r^3+15b^2r^4-6b^{\frac{5}{2}}r^5)\\
    &+br^2-\frac{10}{3}b^{\frac{3}{2}}r^3+\frac{15}{4}b^2r^4-\frac{6}{5}b^{\frac{5}{2}}r^5+\frac{47}{60},
\end{align*}
we can also check that $$\lim_{r\to 0_+}\left|\frac{d^2}{dr^2} \log g_{in}'(r)\right|<\infty~~\text{and}~~\lim_{r\to 0_+} \left|\frac{\frac{d}{dr}\log g_{in}'(r)}{r}\right|<\infty.$$ Similarly, by taking additional higher order derivatives it is easy to check that $g_{in}^{''}(b^{-\frac{1}{2}})=6be$ and $g_{in}^{'''}(b^{-\frac{1}{2}})=20b^{\frac{3}{2}}e$. We omit the tedious but elementary calculations here.
\end{proof}

\begin{proof}[Proof of~\cref{lem:tdistribution}] 
 It's obvious that $f\in \mc{C}^2(\mb{R}^d)$ and it's isotropic. Note that
 \begin{align*}
     \frac{d}{d|x|} f(|x|)&=(d+\kappa)\frac{|x|}{1+|x|^2},\\
     \frac{d^2}{d|x|^2} f(|x|)&=(d+\kappa)\frac{1-|x|^2}{(1+|x|^2)^2}.
 \end{align*}
With $\psi(r)=e^{br^\beta}$ for all $r\ge b^{-\frac{1}{\beta}}$, based on \eqref{EO1} and \eqref{EO2}, we have that for all $|x|\gg b^{-\frac{1}{\beta}}$ and $k\in \mb{Z}^+$,
\begin{align*}
    f'(\psi(|x|))\psi'(|x|)|x|^{-1}-b \beta d |x|^{\beta-2}+(d-\beta)|x|^{-2}&=\kappa b \beta |x|^{\beta-2}+(d-\beta)|x|^{-2}+o(|x|^{-k}),
\end{align*}
and
\begin{align*}
    &f''(\psi(|x|))\psi'(|x|)^2+f'(\psi(|x|))\psi''(|x|)-b\beta(\beta-1)|x|^{\beta-2}-(d-\beta)|x|^{-2}\\
    &=\kappa b\beta(\beta-1)|x|^{\beta-2}-(d-\beta)|x|^{-2}+o(|x|^{-k}).
\end{align*}
Note that for all $|x|\ge b^{-\frac{1}{\beta}}$, we have
\begin{align*}
    &\kappa b \beta |x|^{\beta-2}\le \kappa \beta b^{\frac{2}{\beta}},\\
    &\kappa b\beta(\beta-1)|x|^{\beta-2}\le \kappa \beta(\beta-1) b^{\frac{2}{\beta}}\le \kappa \beta b^{\frac{2}{\beta}}.
\end{align*}
The last inequality holds since $\beta\in (1,2]$. Therefore $f$ satisfies \cref{ass:A2} with some $N_4>0$ and $L=2\kappa \beta b^{\frac{2}{\beta}}$. 

To check \cref{ass:A3}, notice that for all $|x|\gg b^{-\frac{1}{\beta}}$ and $k\in\mb{Z}^+$, we have
\begin{align*}
    f'(\psi(|x|))\psi'(|x|)|x|-b\beta d|x|^\beta +(d-\beta)=\kappa b\beta |x|^\beta+(d-\beta) +o(|x|^{-k})
\end{align*}
Therefore \cref{ass:A3} is satisfied with $A=\kappa b\beta$, $\alpha=\beta$ and some $B\ge 0$,$N_1>0$. 

Lastly, to check \cref{ass:A5}, similar to the calculation in checking \cref{ass:A2}, for all $|x|\gg b^{-\frac{1}{\beta}}$ and $k\in\mb{Z}^+$, we have
\begin{align*}
    f'(\psi(|x|))\psi'(|x|)|x|^{-1}-b\beta d|x|^{\beta-2}+(d-\beta)|x|^{-2}=\kappa b\beta |x|^{\beta-2}+(d-\beta)|x|^{-2}+o(|x|^{-k}),
\end{align*}
and 
\begin{align*}
    &f''(\psi(|x|))\psi'(|x|)^2+f'(\psi(|x|))\psi''(|x|)-b \beta(\beta-1)|x|^{\beta-2}-(d-\beta)|x|^{-2}\\
    &=\kappa b\beta(\beta-1)|x|^{\beta-2}-(d-\beta)|x|^{-2}+o(|x|^{-k}).
\end{align*}
Therefore \cref{ass:A5} is satisfied with arbitrary $\mu\in(0,\kappa b\beta(\beta-1))$, $\theta=2-\beta\ge 0$ and some $N_2>0$.
\end{proof}

\subsubsection{Order estimation of mixing time when $\beta=2$}\label{sec:orderest}
When $f(x)=\frac{d+\kappa}{2}\log (1+|x|^2)$, for all $|x|>b^{-\frac{1}{2}}$, the two eigenvalues of $\nabla^2 f_h(x)$ can be studied via \eqref{EO1} and \eqref{EO2}. We obtain 
\begin{align}
    \lambda_1&=2b\kappa+(d-2)|x|^{-2}-2b(d+\kappa)\frac{1}{1+e^{2b|x|^2}},  \label{multitE1}\\
    \lambda_2&=2b\kappa-(d-2)|x|^{-2}+2b(d+\kappa)\frac{(4b|x|^2-1)e^{2b|x|^2}+1}{(1+e^{2b|x|^2})^2}\label{multitE2}.
\end{align}
Therefore, for all $|x|>b^{-\frac{1}{2}}$: we can estimate $\lambda_1$:
\begin{align*}
    2b\kappa-2b\kappa\frac{1}{1+e^2}-2b<\lambda_1<2b\kappa+(d-2)b,
\end{align*}
which can be simplified as 
\begin{align}\label{L1estimation}
   2b(\frac{e^2}{1+e^2}\kappa-1)<\lambda_1<2b(\kappa+\frac{d}{2}-1),
\end{align}
for all $|x|>b^{-\frac{1}{2}}$. Similarly, we can obtain the following estimate on $\lambda_2$:
\begin{align*}
    2b\kappa-bd\frac{e^4-3e^2-1}{(1+e^2)^2}<\lambda_2<2b\kappa+2b+2b\kappa\frac{3e^2+1}{(1+e^2)^2}.
\end{align*}
The above estimation can be further simplified as 
\begin{align}\label{L2estimation}
  2b(\kappa-0.2d)<\lambda_2<2b(1.5\kappa+1).   
\end{align}
According to \eqref{L1estimation} and \eqref{L2estimation}, we instantly have the locally Lipschitz constant, denoted as $L_{h,loc}$, for $f_h$ in the region $\{|x|>b^{-\frac{1}{2}}\}$ being characterized as 
\begin{align*}
    L_{h,loc}=2b \max\left\{ \kappa+\frac{d}{2}-1,1.5\kappa+1 \right\}.
\end{align*}
Next, for $|x|\le b^{-\frac{1}{2}}$, we can check that for any fixed $d$, we have
\begin{align*}
    \lim_{|x|\to 0} |\lambda_i(|x|)| <\infty \quad i=1,2
\end{align*}
Therefore we can check that for any fixed $|x|\le b^{-\frac{1}{2}}$, we have $|\lambda_i(x)|=O(d)$ for $i=1,2$ when $d\gg 1$. Thus we can conclude the global Lipschitz constant of $f_h$, $L_h=O(d)$ for $d\gg 1$. 

On the other hand side, from \eqref{L1estimation}, we can see for all $\kappa>\frac{1+e^2}{e^2}$, $\lambda_1>b(\frac{e^2}{1+e^2}\kappa-1)$. While from \eqref{L2estimation}, the lower bound would be negative if $d\gg \kappa$. Therefore to ensure both eigenvalues are lower bounded by $b\kappa$, we need to restrict the region $\{|x|>b^{-\frac{1}{2}}\}$ to set of points with larger magnitudes. For all $|x|>(\frac{d}{b\kappa})^{\frac{1}{2}}$, we have when $d\ge \kappa$ and $d\ge 3$ that
\begin{align*}
    \lambda_1&>b\kappa\left(2-\frac{2}{1+e^{2d/\kappa}}-2/d\right)>b\kappa,  \\
    \lambda_2&>2b\kappa-(d-2)(b\kappa/d)=b(\kappa+\kappa/d)>b\kappa.
\end{align*}
To determine the LSI constant, we first construct a function $G_h$ such that $\nabla^2 G_h(x) \succeq b\kappa I_d$ for all $x \in \mathbb{R}^d$. Letting $\varpi:=\sqrt{({d}/{b\kappa})}$, the function $G_h$ is defined piecewisely as 
\begin{equation*}
    G_h = \left\{
    \begin{aligned}
    &f_h   &|x|>\varpi  \\
    &\frac{1}{3} A\left(|x|- \varpi\right)^3+ \frac{1}{2} f_h''\left(\varpi\right)\left(|x|-\varpi\right)^2 \\
    &+f_h'(\varpi)(|x|-\varpi)+f_h(\varpi) & |x|\le \varpi,
    \end{aligned}
    \right.
\end{equation*}
where
\begin{align*}
    f_h(\varpi)&=\frac{d}{2}\log(1+e^{-2d/\kappa})+\frac{\kappa}{2}\log (1+e^{2d/\kappa})+(d-2)\log (\varpi)-\log 2.
\end{align*}
Note that we also have
\begin{align*}
    f_h'(\varpi)&=2b\kappa\varpi+(d-2)\left(\frac{d}{b\kappa}\right)^{-\frac{1}{2}}-2bd(1+\kappa/d)\frac{\varpi}{1+e^{2d/\kappa}},\\
    f_h''(\varpi)&=b\kappa\left(1+\frac{2}{d}\right)+2bd(1+\kappa/d)\frac{\left(4\frac{d}{\kappa}-1\right)e^{2d/\kappa}+1}{(1+e^{2d/\kappa})^2},\\
    A&=-\frac{b\kappa}{d}\left(-2\varpi-4bd(1+\kappa/d)\varpi\frac{1-2\frac{d}{\kappa}e^{2d/\kappa}}{(1+e^{2d/\kappa})^2} \right)<0.
\end{align*}
With the above coefficients, we can check $G_h \in \mc{C}^2(\mb{R}^d)$ and $\nabla G_h(x)\succeq b\kappa I_d$ for all $x\in \mb{R}^d$. We now consider different cases.
\begin{itemize}[leftmargin=0.25in]
    \item [(1)] When $d\gg \kappa$ for all $k\in \mb{Z}$: 
\begin{align*}
    f_h(\varpi)&=d+\frac{1}{2}(d-1)\log d+O(1),\\
    f'(\varpi)&= 3d\left(\frac{d}{b\kappa} \right)^{-\frac{1}{2}}-2\left(\frac{d}{b\kappa}\right)^{-\frac{1}{2}}+o(d^{-k}), \\
    f''(\varpi)&=b\kappa+2\left(\frac{d}{b\kappa}\right)^{-1}+o(d^{-k}),\\
    A&=-2d\left(\frac{d}{b\kappa}\right)^{-\frac{3}{2}}+4\left(\frac{d}{b\kappa}\right)^{-\frac{3}{2}}+o(d^{-k}).
\end{align*}
Therefore the oscillation between $f_h$ and $G_h$ can be written as 
\begin{align*}
    Osc(f_h-G_h)&= \max_{0\le |x|\le\varpi} |f_h(x)-G_h(x)|.
\end{align*}
Since both $G_h$ and $f_h$ are monotone increasing with respect to $|x|$, we then have
\begin{align*}
    Osc(f_h-G_h)\le G_h(\varpi)+f_h(\varpi)=2d+(d-1)\log d+O(1)
\end{align*}
On the other hand,
\begin{align*}
    Osc(f_h-G_h)\ge G_h(0)-f_h(0)=\frac{1}{2}(d-1)\log d-\frac{5}{6} d+O(1)
\end{align*}
Hence, apply Holley-Strook lemma, we can calculate the LSI constant $\chlsi$ as
\begin{align*}
    \chlsi \le 2(b\kappa)^{-1} \exp( Osc(f_h-G_h) )\le C(b\kappa)^{-1} d^{d-1} \exp(2d).
\end{align*}
Furthermore, because of the lower bound on $Osc(f_h,G_h)$, the factor $d^{d-1}$ can be improved. Hence, according to~\cref{LSITULA}, to reach $\epsilon$-accuracy in KL-divergence, the mixing time $n$ satisfies:
\begin{align*}
    n\sim \Tilde{O}(L_h C_h d \epsilon^{-1})\le \Tilde{O}(\exp(2d) d^{d+1}\epsilon^{-1}).
\end{align*}
\item [(2)] When $d/\kappa=O(1)$, or equivalently when $d/\kappa\to C'$, we have
\begin{align*}
    f_h(\varpi)&=\frac{d}{2}[\log(1+e^{-2C'})+C'^{-1}\log (1+e^{2C'})+\log (\frac{C'}{b})]+O(1),\\
    &:=d C'_1+O(1),\\
    f'(\varpi)&=3d(\frac{C'}{b} )^{-\frac{1}{2}}-2(\frac{C'}{b})^{-\frac{1}{2}}+o(d^{-k}) \\
    &:= b^{\frac{1}{2}}d C'_2+O(1) \\
    f_h''(\varpi)&=bd C'^{-1}+2(\frac{C'}{b})^{-1}+o(d^{-k}) \\
     &:=bd C'_3 +O(1),\\
     A&=-2d(\frac{C'}{b})^{-\frac{3}{2}}+4(\frac{C'}{b})^{-\frac{3}{2}}+o(d^{-k}) \\
     &:=b^{\frac{3}{2}}C_4' d+O(1).
\end{align*}
Therefore for all $|x|\le (C'/b)^{\frac{1}{2}}$, we have
\begin{align*}
    G_h(x)&=d\left\{\frac{1}{3} b^{\frac{3}{2}}C_4'|x|^3 + b( \frac{1}{2} C'_3-C'^{\frac{1}{2}}C_4') |x|^2+b^{\frac{1}{2}} (C'_2-C'^{\frac{1}{2}}C'_3+C' C_4')|x|\right.\\
     &\ \left.+(C'_1-C'^{\frac{1}{2}}C'_2+\frac{1}{2} C'C'_3-\frac{1}{3}C'^{\frac{3}{2}}C'_4)\right\}+O(1)
\end{align*}
Similar to the previous argument, the oscillation can be upper bounded as
\begin{align*}
    Osc(f_h-G_h)&= G_h((\frac{C'}{b})^{\frac{1}{2}})+f_h((\frac{C'}{b})^{\frac{1}{2}}) \\
    &=C_h' d+O(1),
\end{align*}
where
\begin{align*}
    C_h'&=\frac{1}{3}C_4'C'^{\frac{3}{2}}+(\frac{1}{2}C_3'-C'^{\frac{1}{2}}C_4')C'+(C'_2-C'^{\frac{1}{2}}C'_3+C' C_4')C'^{\frac{1}{2}}\\
    &\ +(2C'_1-C'^{\frac{1}{2}}C'_2+\frac{1}{2} C'C'_3-\frac{1}{3}C'^{\frac{3}{2}}C'_4).
\end{align*}
Hence, applying Holley-Strook Theorem, the LSI constant can be bounded by
\begin{align*}
    \chlsi\le 2(b\kappa)^{-1}\exp(Osc(f_h-G_h)) \le C(bd/C')^{-1}(\exp(C_h'))^d.
\end{align*}
Hence, according to \cite{vempala2019rapid}, to reach $\epsilon$-accuracy in KL-divergence, the mixing time $n$ satisfies
\begin{align*}
    n\sim \Tilde{O}(L_h \chlsi d \epsilon^{-1})\le \Tilde{O}((\exp(C_h'))^d d^{-1}\epsilon^{-1}).
\end{align*}
\end{itemize}

\subsection*{Acknowledgements} YH was supported in part by NSF TRIPODS grant CCF-1934568. KB was supported in part by NSF grant DMS-2053918. MAE was supported by NSERC Grant [2019-06167], Connaught New Researcher Award, CIFAR AI Chairs program, and CIFAR AI Catalyst grant. We thank Sinho Chewi for providing several helpful remarks on this work. Parts of this work was done when the authors visited the Simons Institute for the Theory of Computing as a part of the ``Geometric Methods in Optimization and Sampling" program during Fall 2021.  
\bibliographystyle{alpha}
\bibliography{citation}
\appendix
\section{A Summary of Constants} 
For the sake of convenience, we provide a list of constants in Table~\ref{tab:table1}.
\begin{table}[p]
\centering
\begin{tabular}{ |c|c|c| } 
 \hline
 Constant & Description & Equation \\ \hline\hline
  $\epsilon$ & Accuracy parameter & NA\\ \hline 
 $\gamma$ & Step-size parameter & \eqref{TULan}\\ \hline \hline
 $\cpoincare$& Poincar\'{e} constant & \eqref{eq:PI} \\ \hline
 $\clsi $& LSI constant & \eqref{eq:LSI} \\ \hline
 $\cmlsi$, $\delta$ & m-LSI related constants & \eqref{eq:mLSI} \\ \hline
 \hline
  $\chpoincare$& Poincar\'{e} constant after Transformation &  NA\\ \hline
 $\chlsi $& LSI constant after Transformation & NA \\ \hline
 $\chmlsi$ & m-LSI constant after Transformation & NA \\ \hline 
 \hline
 $r, b, \beta$ & Parameters related to transformation map & \eqref{G1} \\ \hline\hline
 $A, B, N_1, \alpha$ & Parameters related to dissipativity & \cref{ass:A3} \\ \hline
 $\mu, N_2, \theta$ & Parameters related to degenerate convexity & \cref{ass:A5} \\ \hline
 $ N_3, \rho$ & Parameters related to convexity &  \cref{ass:A1} \\ \hline
 $ N_4, L$ & Parameters related to Lipschitz-gradients  &  \cref{ass:A2} \\ \hline
 $ N_5, m, \alpha_1,\ctail^* $ & Parameters related to tail condition &  \cref{ass:tail assumption on original potential} \\ \hline \hline
 $\alpha_h, A_h, B_h$ & Dissipativity parameters after transformation & \cref{alphadis} \\ \hline
 $\xi_h,\mu_h, \theta_h$ & Degenerate Convexity at infinity after transformation & \cref{ass:Degenerate convexity at infinity} \\ \hline
 $\rho_h$ & Strong-convexity parameter after transformation & NA \\ \hline
 $L_h$ & Lipschitz-gradient parameters after transformation & NA \\ \hline
 $m_h, \alpha_{h,1}, \chtail$ & Tail condition parameters after transformation & \cref{ass:tail assumption for transformed density} \\ \hline \hline
 $\kappa$ & Degrees-of-freedom of $t$ distribution & NA \\ \hline  $\vartheta$ & Parameter related to super and weak Poincar\'e inequalities & NA \\ \hline \hline
 \end{tabular}
 \caption{A list of all the constants used and their  description.}
 \label{tab:table1}
 \end{table}
\end{document}